\definecolor{darkred}{rgb}{0,0,0}
\definecolor{darkgreen}{rgb}{0,0.5,0}
\definecolor{darkblue}{rgb}{0,0,0.5}
\renewcommand{\cite}{\citet}
\renewcommand{\leq}{\leqslant}
\renewcommand{\geq}{\geqslant}
\renewcommand{\epsilon}{\varepsilon}
\newcommand{\set}[1]{{\left\lbrace #1\right\rbrace }}	
\newcommand{\vect}[1]{{\left( #1\right) }}	
\newcommand{\vectp}[1]{{\left( #1\right)_{\hspace*{-1ex}+}}}	
\newcommand{\So}{\phi}
\newcommand{\hSo}{\widehat{\So}}
\newcommand{\Sow}{\beta}
\newcommand{\Sor}{r}
\newcommand{\Soclass}{\cF}
\newcommand{\cSowr}{\Soclass_{\Sow}^\Sor}
\newcommand{\cSow}{\Soclass_{\Sow}}
\newcommand{\Op}{\Gamma}
\newcommand{\fOp}{\fou{\Op}}
\newcommand{\hOp}{\widehat{\Op}}
\newcommand{\fhOp}{\fou{\widehat{\Op}}}
\newcommand{\Opw}{\gamma}
\newcommand{\Opd}{d}
\newcommand{\Opclass}{\mathcal{G}}
\newcommand{\cOpwd}{\Opclass_{\Opw}^\Opd}
\newcommand{\Opother}{\mathrm{T}}
\newcommand{\gf}{g}
\newcommand{\hgf}{\widehat{\gf}}
\newcommand{\Li}{\ell}
\newcommand{\fLi}{\fou{\Li}}
\newcommand{\hLi}{\widehat{\Li}}
\newcommand{\tLi}{\widetilde{\Li}}
\newcommand{\mstar}{m^*}
\newcommand{\mstarn}{{\mstar_n}}
\newcommand{\moptn}{{m^*_n}}
\newcommand{\mloptn}{{m^\diamond_n}}
\newcommand{\umloptn}{\underline{m^\diamond_n}}
\newcommand{\fou}[1]{[#1]}
\newcommand{\bas}{\psi}
\newcommand{\proj}{\Pi}
\newcommand{\Sspace}{\Hz}
\newcommand{\Hspace}{\Hz}
\newcommand{\galsol}{\So_{m}}
\newcommand{\hOpset}[1]{\Omega_{#1}}
\newcommand{\hOpsetmn}{\hOpset{m,n}}
\newcommand{\Xiset}[1]{\mho_{#1}}
\newcommand{\Xisetmn}{\Xiset{m,n}}
\newcommand{\aset}[1]{\mathcal{A}_{#1}}
\newcommand{\asetn}{\aset{n}}
\newcommand{\bset}[1]{\mathcal{B}_{#1}}
\newcommand{\bsetn}{\bset{n}}
\newcommand{\cset}[1]{\mathcal{C}_{#1}}
\newcommand{\csetn}{\cset{n}}
\newcommand{\eset}[1]{\mathcal{E}_{#1}}
\newcommand{\esetn}{\eset{n}}
\def\pen{\mathop{\rm p}\nolimits}
\def\hpen{\mathop{\rm \widehat{p}}\nolimits}
\def\bias{\mathop{\rm b}\nolimits}
\def\contr{\mathop{\rm \kappa}\nolimits}
\newcommand{\Mu}{M^-}
\newcommand{\Mun}{\Mu_n}
\newcommand{\Mo}{M^+}
\newcommand{\Mon}{\Mo_n}
\newcommand{\Men}{\widehat{M}_n}
\newcommand{\Mln}{{M^\Li_n}}
\newcommand{\Mfunc}{M}
\newcommand{\gauss}[1]{\lfloor #1\rfloor}
\newcommand{\floor}[1]{\lfloor #1\rfloor}
\newcommand{\quart}{{1/4}}
\newcommand{\nlogn}{{n(1+\log n)^{-1}}}
\newcommand{\lognn}{{(1+\log n)n^{-1}}}
\newcommand{\lU}{U}
\newcommand{\lV}{V}
\newcommand{\lW}{W}
\newcommand{\lZ}{Z}
\def\Ex{{\mathbb E}}
\def\argmin{\mathop{\rm arg \; min}\limits}
\def\tr{\mathop{\rm tr}\nolimits}
\def\Diag{\mathop{\rm \nabla}\nolimits}
\newcommand{\skalarV}[1]{\langle #1\rangle}
\DeclareMathOperator{\norm}{\lVert\cdot\rVert}  
\newcommand{\normV}[1]{\lVert#1\rVert}   
\newcommand{\absV}[1]{\lvert#1\rvert}  
\newcommand{\HskalarV}[1]{\langle #1\rangle_{\Hz}}   
\DeclareMathOperator{\Hskalar}{\langle\cdot,\cdot\rangle_{\Hz}} 
\DeclareMathOperator{\Hnorm}{\lVert\cdot\rVert_{\Hz}} 
\newcommand{\HnormV}[1]{\lVert#1\rVert_{\Hz}}  
\newcommand{\mnormV}[1]{\lVert#1\rVert_{s}}  
 \newcommand{\Hz}{{\mathbb H}}
\newcommand{\Nz}{{\mathbb N}}
\newcommand{\Rz}{{\mathbb R}}
\newcommand{\Zz}{{\mathbb Z}}
\def\1{\mathop{\mathbbm 1}\nolimits}
\newcommand{\Id}{{[\mathrm{I}]}}
\newcommand{\cE}{{\cal E}}
\newcommand{\cF}{{\cal F}}
\newcommand{\cG}{{\cal G}}
\newcommand{\cR}{{\cal R}}
 \newcommand{\uk}{{\underline{k}}}
\newcommand{\um}{{\underline{m}}}
\newcommand{\tm}{\widetilde{m}}
\newcommand{\whm}{\widehat{m}}
 \newcommand{\hV}{\widehat{V}}
\newcommand{\hsigma}{\widehat{\sigma}}
\newcommand{\htau}{\widehat{\tau}}
\newcommand{\sfs}{q}
\definecolor{dgreen}{rgb}{0,0,0}
\definecolor{dblue}{rgb}{0,0,0}
\definecolor{dred}{rgb}{0,0.0,0}
\definecolor{dgold}{rgb}{0,0,0}
\definecolor{dvio}{rgb}{0,0,0}
\definecolor{gray}{rgb}{0,0,0}
\newtheoremstyle{mysc}
  {3pt}
  {3pt}
  {\it}
  {}
  {\color{black}\sc}
  {.}
  {.5em}
  {}
\newtheoremstyle{myex}
  {10pt}
  {10pt}
  {\rm}
  {}
  {\color{black}\sc}
  {.}
  {.5em}
  {}
\theoremstyle{mysc}\newtheorem{prop}{Proposition}[section]
\theoremstyle{mysc}\newtheorem{assumption}{Assumption}[section]
\theoremstyle{mysc}
\theoremstyle{mysc}\newtheorem{theo}[prop]{Theorem}
\theoremstyle{mysc}
\theoremstyle{mysc}\newtheorem{lem}[prop]{Lemma}
\theoremstyle{myex}\newtheorem{rem}{Remark}[section]
\theoremstyle{myex}
\theoremstyle{myex}
\numberwithin{equation}{section}
\author{{\sc Jan Johannes}\thanks{Institut de statistique, biostatistique et sciences actuarielles (ISBA), Voie du Roman Pays 20, B-1348 Louvain-la-Neuve,
    Belgium, e-mail: \url{{jan.johannes|rudolf.schenk}@uclouvain.be}} \and {\sc Rudolf Schenk$^*$}}
\title{\bf Adaptive estimation of linear functionals \\ in functional linear models}
\begin{document}
\date{Universit{\'e} catholique de Louvain}
\maketitle

\begin{abstract}We consider the estimation of the value of a linear functional of the slope parameter in functional linear regression, where scalar responses are modeled in
  dependence of random functions. In \cite{JohannesSchenk2010} it has been shown that a plug-in estimator based on dimension reduction and additional thresholding can attain minimax
  optimal rates of convergence up to a constant. However, this estimation procedure requires an optimal choice of a tuning parameter  with regard to certain characteristics of
  the slope function and the covariance operator associated with the functional regressor. As these are unknown in practice, we investigate a fully data-driven choice of the tuning parameter
 based on a combination of model selection and Lepski's method, which is inspired by the recent work of \cite{GoldenshlugerLepski2011}. The tuning parameter is  selected as the
 minimizer of a stochastic  penalized contrast function imitating Lepski's method among a random collection of admissible values. We show that this adaptive procedure attains the lower bound for the minimax risk up to a logarithmic factor over a wide range of classes of slope functions and
  covariance operators. In particular, our theory covers point-wise estimation as well as the estimation of local averages of the slope parameter.
 \end{abstract} {\footnotesize
\begin{tabbing}
\noindent \emph{Keywords:} \=Adaptation, Linear functional, Lepski's method, 
Model selection, Linear Galerkin projection,\\
\>  Minimax-theory, Point-wise estimation, Local average estimation, Sobolev space.\\[.2ex]
\noindent\emph{AMS 2000 subject classifications:} Primary 62J05; secondary 62G05, 62G20
\end{tabbing}
This work was supported by the IAP research network no.\ P6/03 of the
Belgian Government (Belgian Science Policy) and by the ``Fonds
Sp\'eciaux de Recherche'' from the Universit\'e catholique de Louvain.
}

\section{Introduction}
The functional linear model with scalar response describes the relationship between a real random variable $Y$ and the variation of a functional regressor $X$. Usually, the random
function $X$ is assumed to be square integrable or more generally  to take its  values in a separable Hilbert space 
 $\Hspace$ with the inner product $\Hskalar $ and associated norm $\Hnorm$. For convenient notations  we assume that the regressor $X$ is  centered in the  sense that for all $h \in \Hspace$ the real valued random variable $\HskalarV{X,h}$  
  has mean zero. The linear relationship between  $Y$ and $X$ is expressed by the equation 
 \begin{equation}\label{intro:model}Y=\HskalarV{\So,X} +\sigma\epsilon,\quad\sigma>0,
 \end{equation}
 with the unknown slope parameter $\So\in
  \Hspace$  and a real-valued, centered and standardized error term $\epsilon$. 
The objective of this paper is the fully data-driven estimation of the value of a known linear functional of the 
slope $\So$ based on an independent and identically distributed (i.i.d.) sample of $(Y,X)$ of size $n$.

The   estimation of the value  of a  
linear functional  
  offers a general framework for naturally arising related estimation problems, such as estimating the value of $\So$
  - or of one of  its derivatives - 
 at a given point or estimating the average of $\So$ over a subinterval of its domain.

There is extensive literature available on the topic of
 non-parametric estimation of the value of a linear functional from Gaussian white
 noise observations   (in case of direct observations see \cite{Speckman1979}, \cite{Li1982} or \cite{IbragimovHasminskii1984}, while in case
 of indirect observations we refer to \cite{DonohoLow1992}, \cite{Donoho1994} or \cite{GoldPere2000} and references therein).  In the situation of a functional linear model as
 considered  in \eqref{intro:model}, which does  in general  not lead to Gaussian white noise observations, \cite{JohannesSchenk2010} have  investigated  the  minimax optimal performance of a plug-in estimator
for the value   of a linear functional $\Li$ evaluated at $\So$. For this purpose 
 the slope $\So$ is  replaced in $\Li(\So)$ by  a suitable estimator
 $\hSo_\moptn$ depending  on a tuning parameter $\moptn\in\Nz$. However their choice of the tuning parameter is not data-driven.   In the present paper we develop a data-driven
 selection procedure which features comparable minimax-optimal properties.
 
The non-parametric estimation of the slope function $\So$ 
has been an issue of growing interest in the recent literature and a variety of such estimators  have been studied.
 For example, \cite{Bosq2000},
\cite{CardotMasSarda2007} or \cite{MullerStadtmuller2005} 
analyze a functional principal components regression, while a penalized least squares approach combined with projection
onto some basis (such as splines) is examined in \cite{RamsayDalzell1991}, \cite{EilersMarx1996}, \cite{CardotFerratySarda2003}, \cite{HallHorowitz2007} or
  \cite{CrambesKneipSarda2007}. \cite{CardotJohannes2008} investigate a linear Galerkin approach
coming from the inverse problem community (c.f. \cite{EfromovichKoltchinskii2001} and \cite{HoffmannReiss04}).
The resulting thresholded projection estimator $\hSo_\moptn$ is used by \cite{JohannesSchenk2010} in  their plug-in estimation procedure $\hLi_{\moptn}:=\Li(\hSo_\moptn)$ for   the value $\Li(\So)$ of a linear functional evaluated at $\So$.

 It has been shown in \cite{JohannesSchenk2010} that the attainable rate of convergence of the plug-in estimator is basically determined by the \textit{a priori}
conditions on the solution $\So$ and the covariance operator $\Op$  associated with the regressor $X$ (defined below). These conditions are expressed  in the form $\So\in\cF$  and $\Op\in \cG$, for suitably chosen classes $\cF\subseteq \Hspace$ and $\cG$; we postpone their formal introduction along with their
 interpretation  to Section \ref{sec:mar}.
Moreover, the accuracy of any estimator $ \tLi$
of the value $\Li(\So)$ has been assessed  by its maximal mean squared error  with respect to these classes, that is 
\begin{equation*} 
 \cR^\Li[\tLi; \cF,\cG]:=\sup_{\So\in\cF}\sup_{\Op\in\cG}
\Ex|\tLi-\Li(\So)|^2.
\end{equation*}
The main purpose 
of \cite{JohannesSchenk2010} has been to derive  a lower bound
\[
\cR^\Li_*[n^{-1}; \cF,\cG ]\leq \inf\nolimits_{\tLi}\cR^{\Li}[\tLi;\cF,\cG]
,
\]
where the infimum is taken over all  estimators $\tLi$, and 
 to prove that the estimator
$\hLi_\moptn$ satisfies
\[\cR^\Li[\hLi_\moptn;\cF,\cG]\leq C\cdot 
\cR^\Li_*[n^{-1}; \cF,\cG ]
,\quad \text{with }0<C<\infty,\]
 for a variety of classes $\cF$ and $\cG$. In other words it has been shown that  $\cR^\Li_*[n^{-1}; \cF,\cG ]$ is the minimax-optimal rate attained by the estimator $\hLi_\moptn$. The optimal performance  of the estimator depends crucially on the  choice  $m^*_n$ of the tuning parameter, which in turn, relies  strongly on \textit{a priori} knowledge of
 the    sets $\cF$\ and $\cG$. However, this information is widely inaccessible in practice. 

The aim of the present paper consists in proposing a fully data-driven selection procedure for the tuning parameter. Our selection method combines model selection
(c.f. \cite{BarronBirgeMassart1999} and its detailed discussion in \cite{Massart07}) and Lepski's method (c.f. \cite{Lepski1990} and its recent review in \cite{Mathe2006}). It is inspired by the
recent work of \cite{GoldenshlugerLepski2011} who consider data-driven bandwidth selection  in kernel density estimation. We choose the appropriate tuning parameter $\whm$ as the minimizer of a stochastic  penalized
contrast function imitating Lepski's method among a random collection of admissible values. Furthermore,  we  show that the maximal  risk of the resulting estimator $\hLi_{\whm}$ satisfies
\[
\cR^\Li[\hLi_{\whm};\cF,\cG]\leq C\cdot
\cR^\Li_*[(1+\log n) n^{-1};\cF,\cG]\quad\text{for } 0<C<\infty,
\]
 for a variety of classes $\cF$ and $\cG$. The upper bound in the last display features a logarithmic factor  when compared to the minimax rate of convergence
 $\cR^\Li_*[n^{-1}; \cF,\cG ]$ which possibly results in a deterioration of the rate.  Therefore, the completely data-driven  estimator is optimal or nearly optimal in the minimax sense simultaneously over
a variety of both solution sets $\cF$\ and  classes of operators $\cG$. We call such estimation procedures adaptive. The appearance of the logarithmic factor within the rate is a
known fact in the context of local estimation (c.f. \cite{LLP2008} who consider model selection given  direct  Gaussian observations).  \cite{BrownLow1996}  show
that it is unavoidable in the context of non-parametric Gaussian regression and, hence it is widely considered as an acceptable price for adaptation.  This factor is also present in the recent
work of \cite{GoldPere2000} where Lepski's method is applied in the presence of  indirect Gaussian observations. In contrast to this situation  the
operator is not known in advance in functional linear regression and  hence a straightforward application of their results is not obvious. We will show that our proposed
data-driven estimation method attains the minimax-rates up to a logarithmic factor for a variety of a classes of both slope functions and covariance operators.

The paper is organized as follows: in Section \ref{sec:mar} we introduce the adaptive estimation procedure and review the available minimax theory as presented in
\cite{JohannesSchenk2010}. In Section \ref{sec:urb} we present the key arguments of the proof of an upper  risk bound for the adaptive estimator, while more technical aspects of
the proof are deferred to the Appendix.  We discuss the examples of point-wise and local average estimation in Section \ref{sec:ex}. 
\section{Methodology and review}\label{sec:mar}
We suppose that the regressor $X$ has a finite second
moment, i.e., $\Ex\HnormV{X}^2<\infty$, and that $X$ is uncorrelated to the
random error $\epsilon$ in the sense that $\Ex{[\epsilon\HskalarV{X,h}]}=0$ for
all $h\in\Hspace$, as usually assumed in this context, see  for example 
\cite{Bosq2000}, \cite{CardotFerratySarda2003} or
\cite{CardotMasSarda2007}.  Multiplying both sides in (\ref{intro:model}) by $\HskalarV{X,h}$
and taking the expectation leads to  the normal equation
 \begin{equation}\label{intro:e2:2}\HskalarV{\gf,h}:=\Ex[Y\HskalarV{X,h}]
 =\Ex[\HskalarV{\So,X}\HskalarV{X,h}]=:\HskalarV{\Op \So,h},\quad \forall h \in \Hspace,
 \end{equation}
 where $\gf$ belongs to $\Hspace$ and $\Op$ denotes the covariance operator associated with the random function $X$.
In what follows we  assume that there exists a unique  solution $\So\in \Hspace$ of equation (\ref{intro:e2:2}), i.e., that $\Op$ is strictly positive and that  its range contains $\gf$  (for a detailed discussion we refer to \cite{CardotFerratySarda2003}). 
 Obviously, these conditions are sufficient for the identification of the value $\Li(\So)$.
  Since the estimation of $\So$ involves an
inversion of the covariance operator $\Op$  it is called an \emph{inverse} problem. Moreover, due to the finite second moment of the regressor $X$, the associated covariance
 operator $\Op$ is  nuclear, i.e., its trace is finite.   Therefore, the  reconstruction of
 $\So$ leads to an \emph{ill-posed inverse problem} (with the additional difficulty that $\Op$ is unknown and has to be estimated). 
In the following  we assume that the joint distribution of   the regressor and error term is Gaussian, more precisely, we suppose that for any finite set $\{h_1,\ldots,h_{k-1}\}\subset\Hspace$ the vector 
 $( \HskalarV{X,h_1}\,\ldots,\HskalarV{X,h_{k-1}},\epsilon)$ follows a $k$-dimensional multivariate normal distribution.
\begin{rem}
The assumption of Gaussianity is not  essential for the proof of our main result. This assumption on the distributions of the error and the regressor is
 only used to prove the bounds given in Lemma \ref{app:gauss:l2}. Analogues of the results can be shown at the cost of  longer
proofs under appropriately chosen moment conditions.\hfill$\square$\end{rem}  

\subsection{Adaptive Estimation Procedure}
\paragraph{Introduction of the estimator.}
In order to derive an estimator for the unknown slope function $\So$ we  follow the presentation of \cite{JohannesSchenk2010} and base our reconstruction on the development of $\So$ in an arbitrary orthonormal basis. Here
 and subsequently, we fix a pre-specified orthonormal basis $\{\bas_j\}_{j=1}^{\infty}$   of $\Hspace$ which   does in general not  correspond to the eigenfunctions of the operator
$\Op$ defined in \eqref{intro:e2:2}. We require in the following that the slope function $\So$ belongs to a function class $\Soclass$ containing $\{\bas_j\}_{j= 1}^{\infty}$ and, moreover that $\cF$
is included in the domain  of the linear functional $\Li$. For technical reasons and without loss of generality we assume  that $\Li(\psi_1)=1$ which can always be ensured by reordering
and rescaling, except for the trivial case $\ell\equiv 0$. With respect to this basis, we consider for all $h\in\Hspace$ the development $h=\sum_{j=1}^\infty \fou{h}_j\bas_j$ where the sequence
$\fou{h}:=(\fou{h}_j)_{j\geq1}$   of generalized Fourier coefficients $\fou{h}_j:=\HskalarV{h,\bas_j}$ is square-summable, i.e.,
$\HnormV{h}^2=\sum_{j=1}^\infty\fou{h}_j^2<\infty$. Given a dimension parameter $m\in\Nz$  we have the subspace $\Sspace_m$ - spanned by the basis functions $\{\bas_j\}_{j=1}^m$ - at our disposal  and
we call $\So_{m}\in\Sspace_{m}$ a Galerkin solution of $\gf=\Op\So$, if  $\HnormV{\gf-\Op\So_{m}}\leqslant  \HnormV{\gf-\Op h}$ for all $h\in\Sspace_{m}$.
Since $\Op$ is strictly positive it is easily seen that the Galerkin solution $\So_{m}$ of  $\gf=\Op\So$ exists uniquely.
Let us introduce for any function $h$ the  $m$-dimensional vector of coefficients  $\fou{h}_{\um}:=(\fou{h}_j)_{1\leq j\leq m}$ and for the operator $\Op$  the $(m\times m)$-dimensional matrix $\fOp_{\um}:=(
\HskalarV{\bas_j,\Op\bas_k}
)_{1\leq j,k\leq m}$. 
Then the Galerkin solution $\So_m$  satisfies  $
\fOp_{\um}\fou{\So_m}_{\um}= \fou{\gf}_{\um}$. Since $\Op$ is injective,  the matrix $\fOp_{\um}$ is non-singular for all $m\geq 1$ and therefore the Galerkin solution $\So_m\in\Sspace_m$ is uniquely determined by the vector of coefficients $
\fou{\So_m}_{\um}= \fOp_{\um}^{-1}\fou{\gf}_{\um}$ and  $\fou{\So_m}_j=0$ for $j>m$. In order to derive an estimator  for the  vector $\fou{\So_m}_{\um}$, we replace the unknown quantities $\fou{\gf}_{\um}$ and $\fOp_{\um}$
 by their empirical counterparts and apply additional thresholding. 
We observe that $\fOp_{\um}= \Ex\fou{X}_{\um}\fou{X}_{\um}^t$ and $\fou{\gf}_{\um}=\Ex Y\fou{X}_{\um}$, therefore, given an i.i.d.\ sample $\{(Y_i,X_i)\}_{i=1}^n$ of $(Y,X)$, 
it is natural
to consider the estimators  $\fou{\hgf}_{\um}:=\frac{1}{n}\sum_{i=1}^n Y_{i}\fou{X_{i}}_{\um}$
and $\fhOp_{\um}:=\frac{1}{n}\sum_{i=1}^n[X_i]_{\um}[X_i]_{\um}^t$. Let us denote by $\mnormV{ \fhOp^{-1}_{\um}     }$  the spectral norm of $    \fhOp^{-1}_{\um}  $, i.e., its largest eigenvalue, 
 and define the estimator  $\hSo_m\in\Sspace_m$ by means of the coefficients  $\fou{\hSo_m}_j=0$ for $j>m$  and 
\begin{equation*}
\fou{\hSo_m}_{\um}:=\left\{\begin{array}{ll} 
\fhOp_{\um}^{-1} [\hgf]_{\um}, & \mbox{if $\fhOp_{\um}$ is non-singular and }\mnormV{\fhOp^{-1}_{\um}}\leq n,\\
0&\mbox{otherwise}.
\end{array}\right.
\end{equation*}
Observe that $\Li(\So_m)=(\Li(\bas_1),\dotsc,\Li(\bas_m))\fou{\So_m}_\um=:\fLi_\um^t\fou{\So_m}_\um$ with the slight abuse of notations  $\fLi_\um:=(\fLi_j)_{1\leq j\leq m}$  and generic
elements $\fLi_j:=\Li(\bas_j)$. In \cite{JohannesSchenk2010} it has been shown that the estimator $\hLi_m:=\Li(\hSo_m)$ with optimally chosen dimension parameter $m$   can attain
minimax-optimal rates of convergence. This choice involves certain characteristics of the slope $\So$ and the covariance operator $\Op$ which are unavailable in practice. In the
next paragraph we introduce a fully data-driven selection method for the dimension parameter. 
 \paragraph{Introduction of the adaptive estimation procedure.}
Our selection method is 
inspired by the
recent work of \cite{GoldenshlugerLepski2011} 
and combines the
techniques of model selection  and Lepski's method. We determine the dimension parameter
 among a collection of admissible values
 by minimizing a penalized contrast function. To this end, we define for all $n\geq1$
 the value   $\Mln:=\max\set{1\leq m\leq \gauss{n^{1/4}}: \fLi_\um^t\fLi_\um\leq n}$ where $\gauss{a}$ denotes as usual the integer
part of $a\in\Rz$ and introduce the random integer 
\begin{equation}\label{estimator:def:Men}
  \Men:=\min\set{2\leq m\leq \Mln: \mnormV{\fhOp_\um^{-1}} (\fLi_\um^t\fLi_\um)> n(1+\log n)^{-1}}-1.
\end{equation}
Furthermore, we define a stochastic penalty sequence $\hpen:=(\hpen_m)_{1\leq m\leq \Men}$ by
\begin{equation*}
  \hpen_m:=700 \vect{\frac{2}{n}\sum_{i=1}^nY_i^2 + 2\fou{\hgf}_\um^t\fhOp_\um^{-1}\fou{\hgf}_\um} \cdot \max_{1\leq k\leq m} \fLi_\uk^t \fhOp_\uk^{-1}\fLi_\uk\cdot \frac{(1+\log
    n)}{n}.
\end{equation*}
The random integer $\Men$ and the stochastic penalty $\hpen_m$ are used to define a  contrast by 
\begin{equation*}
  \contr_m:=\max_{m\leq k\leq\Men}\set{|\hLi_k-\hLi_m|^2  -\hpen_k}.
\end{equation*}
For a subset $A\subset\Nz$ and  a sequence $(a_m)_{m\geq1}$ with minimal value in $A$  we set  $\argmin\nolimits_{m\in A}\{a_m\}:=\min\{m:a_m\leq a_{m'},\forall m'\in A\}$  and  select the dimension parameter
\begin{equation}
  \label{estimator:def:whm}
  \whm:=\argmin_{1\leq m\leq\Men}\set{\contr_m +\hpen_m}.
\end{equation}
The  estimator of $\Li(\So)$ is now given by $\hLi_{\whm}$ and  we will derive an upper bound for its risk below. By construction the choice of the dimension
parameter and hence the estimator  $\hLi_{\whm}$ rely only on the data and in particular not on the regularity assumptions on the slope and the operator which we formalize
in the next section.
\subsection{Review of minimax theory}\label{sec:romt}
We express our \textit{a priori} knowledge about the unknown slope parameter and covariance operator in the form $\So\in\Soclass$ and
$\Op\in\Opclass$. The class $\Soclass$  reflects  information on the solution $\So$, e.g., its level of smoothness, whereas the assumption $\Op\in\Opclass$ typically results in conditions on the decay of the
eigenvalues of the operator $\Op$. The following construction of the classes $\Soclass$ and $\Opclass$ will be flexible enough to characterize, in particular,  differentiable  or analytic slope functions and  allows us to discuss both a polynomial and
exponential decay of the covariance operator's eigenvalues.

\paragraph{Assumptions and notations.} With respect to the basis $\{\bas_j\}_{j=1}^\infty$ and given a strictly positive sequence of weights $(w_j)_{j\geq 1}$, or $w$ for short, we define the  weighted norm $\norm_{w}$ by 
$\normV{h}_{w}^2:= \sum_{j=1}^\infty w_j\fou{h}_j^2$ for $h\in\Hspace$. 
Throughout the rest of the paper  let $\Sow$ be a  non-decreasing sequence of weights    with $\Sow_1=1$ such that slope parameter $\So$ belongs to the ellipsoid 
\begin{equation*}
  \cSowr:=\set{ h\in\Hspace: \normV{h}^2_\Sow\leq \Sor}\quad\text{with radius  $\Sor>0$.} 
\end{equation*}
In order to guarantee that  $\cSowr$ is contained in the domain of the linear functional $\Li$ and that $\Li(h)=\sum_{j\geq1} \fLi_j\fou{h}_j$ for all
$h\in\cSowr$ with
$\fLi_j=\Li(\bas_j)$, $j\geq1$,  it is sufficient that  $\sum_{j\geq1} \fLi_j^2\Sow_j^{-1}<\infty$. We may emphasize
that we  neither impose that the sequence $\fLi=(\fLi_j)_{j\geq1}$ tends to zero nor that it is square summable. However, if it is square summable then $\Hspace$ is the domain of $\Li$. Moreover,
$\fLi$ coincides with the sequence of generalized Fourier coefficients of the representer of $\Li$ given by Riesz's theorem.\\

As usual in the context of ill-posed inverse problems, we link the mapping properties of the covariance operator $\Op$ and the regularity conditions on $\So$.  To this end, we consider the sequence $(\skalarV{\Op
  \bas_j,\bas_j})_{j\geq1}=: (\fOp_{jj})_{j\geq 1}$. Since $\Op$ is nuclear, this sequence is summable and hence vanishes as $j$ tends to infinity.  In what follows we impose restrictions on the decay of this
sequence. Let  $\cG$ denote  the set of all strictly positive nuclear operators defined on $\Hspace$.  We suppose that there exists  a strictly positive, summable
sequence of weights $\Opw$ with $\Opw_1=1$   such that $ \Op$ belongs to the subset 
\begin{equation*}
\cOpwd:=\Bigl\{ T\in\cG:\quad   \Opd^{-2}\normV{h}_{\Opw^2}^2\leqslant \HnormV{Th}^2\leqslant {\Opd^2}\, \normV{h}_{\Opw^2}^2,\quad \forall h \in \Hspace\Bigr\}\quad\mbox{with }\Opd\geq1
\end{equation*}
where  we understand here and subsequently arithmetic operations on a sequence of real numbers   component-wise, e.g., we write $\Opw^2$ for $(\Opw_j^2)_{j\geq1}$.
Notice  that  for   $\Op\in\cOpwd$  it follows that
 $d^{-1}\Opw_j\leq \fOp_{jj}\leq d\Opw_j$.  Moreover,   if $\lambda$ denotes its  sequence  of  eigenvalues, then  $   {\Opd^{-1}} \Opw_j \leq \lambda_j \leq {\Opd}
 \Opw_j$ which justifies the condition $\sum_{j=1}^\infty\Opw_j<\infty$. Let us summarize the previous conditions:
 \begin{assumption}\label{minimax:ass:reg}
The sequences $1/\Sow$ and $\Opw$ are monotonically decreasing with limit zero and  $\Sow_1=\Opw_1=1$ such that  $\sum_{j\geq1}\fLi_j^2\Sow_j^{-1}<\infty$ and $\sum_{j\geq1}\Opw_j<\infty$.
\end{assumption}
\paragraph{Illustration.}
\noindent We illustrate the last assumption for typical choices of the
sequences $\Sow$,   
$\Opw$ and $\fLi$. Consider  $\fLi^2_j=|j|^{-2s}$  and: 
 \begin{enumerate}
  \item[\textit{(pp)}]
 $\Sow_j=|j|^{2p}$,   $\Opw_j=|j|^{-2a}$
 with $p>0$, $ a>1/2$ and $s>1/2-p$;
 \item[\textit{(pe)}] 
 $\Sow_j=|j|^{2p}$, $\Opw_j =\exp(-|j|^{2a}+1)$
 with $p>0$, $a>0$ and $s>1/2-p$;
 \item[\textit{(ep)}] 
 $\Sow_j=\exp(|j|^{2p}-1)$,
 $\Opw_j=|j|^{-2a}$ 
with  $p>0$, $a>1/2$
 and  $s\in\Rz$;
 \end{enumerate}
then Assumption \ref{minimax:ass:reg} holds true in all cases.

 \paragraph{Minimax theory reviewed.}
  \cite{JohannesSchenk2010} have derived a  lower bound for the minimax risk $\inf_{\tLi}\cR^\Li[\tLi;\cSowr,\cOpwd]$
and have shown that the proposed estimator $\hLi_{m}$ can attain this lower bound up to constant provided that  the dimension parameter is chosen appropriately. 
 In order to formulate the minimax rate  below let us define for $m\geq 1$ and $x\in(0,1]$
 \begin{multline*}
\cR^\Li_m[x;\cSowr,\cOpwd]:= \max\set{\sum_{j>m} \frac{\fLi_j^2}{\Sow_j},  \max\Bigl(\frac{\Opw_{m}}{\Sow_{m}},x\Bigr)\sum_{j=1}^{m}\frac{\fLi_j^2}{\Opw_j}}\\\text{ and }\;\cR^\Li_*[x;\cSowr,\cOpwd]:=\min_{m\geq1}\cR^\Li_m[x;\cSowr,\cOpwd].
 \end{multline*}
With this notation the lower bound,   when considering  an i.i.d.\ sample of  size $n$, is basically a multiple of   $\cR^\Li_*[n^{-1};\cSowr,\cOpwd]$. To be more precise, if  we define $\mstarn:=\argmin\nolimits_{m\geq1}  \cR^\Li_m[n^{-1}; \cSowr,\cOpwd]$ and if 
 Assumption \ref{minimax:ass:reg} and  $\inf_{n\geq1}\min\big(\frac{\Sow_{\mstarn}}{n\Opw_{\mstarn}},\frac{n\Opw_{\mstarn}}{\Sow_{\mstarn}}\big)>0$ are satisfied then there exists a constant $C>0$
depending only on the classes and $\sigma^2$  such that we have for all $n\geq 1$
\begin{equation*}
  \inf_{\tLi}\cR^\Li[\tLi;\cSowr,\cOpwd]\geq C \cdot \cR^\Li_*[n^{-1};\cSowr,\cOpwd].
\end{equation*}
On the other hand it is shown in  \cite{JohannesSchenk2010}  that  $\cR^\Li_*[n^{-1};\cSowr,\cOpwd]$  provides up to a constant  an upper bound for the maximal risk of the proposed estimator $\hLi_{\mstarn}$. More precisely, if we assume in  addition $\sup_{m\geq
   1}m^3\Opw_m\Sow_m^{-1}<\infty$ then there exists  a constant $C>0$
depending only on the classes and $\sigma^2$ such that  we have for all $n\geq 1$
\begin{equation*} 
 \cR^\Li[\hLi_{\mstarn};\cSowr,\cOpwd]\leq C\cdot  \cR^\Li_*[n^{-1};\cSowr,\cOpwd].
\end{equation*}
Consequently 
 the rate $ \cR^\Li_*[n^{-1};\cSowr,\cOpwd]$ is optimal and  $\hLi_{m^*}$   is  minimax-optimal.

 \paragraph{Illustration continued.}
For the configurations defined below Assumption \ref{minimax:ass:reg} 
  the estimator   $\hLi_{\mstarn}$     with  dimension parameter $\mstarn$  as
  given below is minimax optimal under the following conditions. The minimax optimal rate of convergence is determined by the orders of $ \cR^\Li_*[n^{-1};\cSowr,\cOpwd] $. Here and subsequently, we use for two strictly positive sequences $(x_n)_{n\geq1},(y_n)_{n\geq1}$ the notation $x_n\asymp y_n$, if $(x_n/y_n)_{n\geq1}$ is bounded away both from zero and infinity.
\begin{enumerate}
 \item[\textit{(pp)}]
 If  $p>0$, $a>1/2$ and $p+a\geq 3/2$
 then $\mstarn\asymp n^{1/(2p+2a)}$
 and if
  $s>1/2-p$, then\\
 $ \cR^\Li[\hLi_{\mstarn};\cSowr,\cOpwd]\asymp
\begin{cases}
 n^{-(2p+2s-1)/(2p+2a)}, &\text{if $s-a<1/2$}\\
 n^{-1}\log n, &\text{if $s-a=1/2$}\\
 n^{-1}, &\text{if $s-a>1/2$}.
\end{cases}
$ 

\item[\textit{(pe)}] If
 $p>0$ and $a>0$,
 then  $\mstarn\asymp \log(n(\log n)^{-p/a})^{1/(2a)}$
 and if 
 $s>1/2-p$, then 
$ \cR^\Li[\hLi_{\mstarn};\cSowr,\cOpwd]\asymp
(\log n)^{-(2p+2s-1)/(2a)}$.

\item[\textit{(ep)}] 
If 
$p>0$,
 $a>1/2$ and $s\in\Rz$ then $\mstarn\asymp \log(n(\log n) ^{-a/p})^{1/(2p)}$ and

  $ \cR^\Li[\hLi_{\mstarn};\cSowr,\cOpwd]\asymp
\begin{cases}
 n^{-1}(\log n)^{(2a-2s+1)/(2p)}, &\text{if $s-a<1/2$}\\
 n^{-1}\log(\log n) , &\text{if $s-a=1/2$}\\
 n^{-1}, &\text{if $s-a>1/2$}.
\end{cases}
$
\end{enumerate}
\section{Upper risk bound for the adaptive estimator}\label{sec:urb}
The fully adaptive estimator $\hLi_{\whm}$ of $\Li(\So)$ relies on the choice of a random dimension parameter $\whm$ which  does not involve any knowledge about the classes $\cSowr$ and $\cOpwd$. The main result of
this paper consists in an upper bound for the maximal risk $\cR^\Li[\hLi_{\whm};\cSowr,\cOpwd]$  given by  the following theorem. We
present the main arguments of its proof in this section whereas the more technical aspects are deferred to the appendix. We close this section by illustrating and discussing the result. 
\begin{theo}\label{adaptive:t1} Assume an i.i.d.\ sample of $(Y,X)$ of size $n$ obeying \eqref{intro:model} and let the joint distribution of the random function $X$ and the error $\epsilon$ be normal.
Consider  sequences $\Sow$ and $\Opw$ satisfying Assumption \ref{minimax:ass:reg}. 
Define $\mloptn:=\argmin\nolimits_{m\geq1}\cR^\Li_m[\lognn;\cSowr,\cOpwd]$ and suppose that $\Opw_{\mloptn}^{-1}\fLi_{\umloptn}^t\fLi_{\umloptn}=o(\nlogn)$ as $n\to\infty$ then there exists a constant  $C>0$ depending on the classes $\cSowr$ and $\cOpwd$, the linear functional $\Li$, and  $\sigma^2$ only such that
\begin{equation*}
\cR^\Li[\hLi_{\whm};\cSowr,\cOpwd]\leq C\cdot \cR^\Li_*[\lognn;\cSowr,\cOpwd],\quad\mbox{ for all }n\geq1.
\end{equation*}
\end{theo}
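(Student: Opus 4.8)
The plan is to follow the by-now classical Goldenshluger--Lepski strategy, but adapted to the fact that both the operator $\Op$ and the linear functional-dependent quantities must be estimated. The starting point is a deterministic bias/variance decomposition of the risk of the oracle-type estimator: for the (random) index $\whm$ selected by \eqref{estimator:def:whm}, and any fixed reference index $m$, one has, by definition of the contrast $\contr$ and the selector $\whm$, the pointwise inequality
\begin{equation*}
|\hLi_{\whm}-\Li(\So)|^2\leq 3\,|\hLi_{\whm}-\hLi_{\whm\vee m}|^2+3\,|\hLi_{\whm\vee m}-\hLi_{m}|^2+3\,|\hLi_{m}-\Li(\So)|^2,
\end{equation*}
and the first two terms are controlled by $\contr_{m}+\hpen_{\whm}+\hpen_m$ and $\contr_{\whm}+\hpen_m+\hpen_{\whm}$ respectively, using $\contr_m+\hpen_m\leq \contr_{\whm}+\hpen_{\whm}$ (valid because $\whm$ is the minimizer) plus the fact that $\contr$ is defined as a maximum over $k\geq m$. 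This reduces the problem to bounding, in expectation, $\Ex[\contr_m]$, $\Ex[\hpen_{\whm}]$, and the ordinary bias term $\Ex|\hLi_m-\Li(\So)|^2$ at the oracle choice $m=\mloptn$. The bias term is handled by the existing non-adaptive analysis of \cite{JohannesSchenk2010}: it is of order $\cR^\Li_{\mloptn}[\lognn;\cSowr,\cOpwd]$ on the event where $\fhOp_{\um}$ is well-conditioned.

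The core of the argument is splitting the sample space according to whether the empirical quantities are close to their population counterparts. I would introduce the event on which $\mnormV{\fhOp_{\um}^{-1}}$ and $\max_k\fLi_{\uk}^t\fhOp_{\uk}^{-1}\fLi_{\uk}$ are comparable (up to universal constants) to the deterministic analogues $\mnormV{\fOp_{\um}^{-1}}$ and $\max_k\fLi_{\uk}^t\fOp_{\uk}^{-1}\fLi_{\uk}$ for all $m\leq\Mln$, and similarly an event controlling $\Men$ from below by a deterministic index $\Mun$ comparable to $\mloptn$. On the good event, the stochastic penalty $\hpen_m$ is, up to constants, a deterministic penalty $\pen_m$ of the right order (namely $700\cdot(\text{const})\cdot \max_k\fLi_{\uk}^t\fOp_{\uk}^{-1}\fLi_{\uk}\cdot\lognn$), and the term $|\hLi_k-\hLi_m|^2-\hpen_k$ inside $\contr_m$ is negative in expectation once $\hpen_k$ dominates the variance of $\hLi_k-\hLi_m$ — this is precisely where the constant $700$ and the logarithmic inflation of the penalty are needed, to absorb the stochastic fluctuations via a concentration/Talagrand-type bound for $|\hLi_k-\hLi_m|^2$ minus its expectation, summed over the at most $\Mln\leq n^{1/4}$ admissible indices. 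On the complementary (bad) event, one uses the crude a priori bound $\mnormV{\fhOp_{\um}^{-1}}\leq n$ built into the definition of $\hSo_m$, together with the hypothesis $\Opw_{\mloptn}^{-1}\fLi_{\umloptn}^t\fLi_{\umloptn}=o(\nlogn)$, to show the contribution is negligible because the probability of the bad event decays faster than any polynomial (exponential concentration of empirical covariance operators under the Gaussianity assumption, via Lemma \ref{app:gauss:l2}).

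The remaining piece is the expectation of the selected penalty $\Ex[\hpen_{\whm}]$. Since $\whm\leq\Men\leq\Mln$ and the penalty is monotone non-decreasing in $m$, on the good event $\hpen_{\whm}\leq\hpen_{\Men}$, and $\hpen_{\Men}$ is comparable to the deterministic penalty at $\Mun\asymp\mloptn$; one then checks that $\pen_{\mloptn}\lesssim \cR^\Li_*[\lognn;\cSowr,\cOpwd]$ using the definition of $\mloptn$ as the minimizer of $\cR^\Li_m[\lognn;\cdot]$ and the structure of that functional (the penalty order $\max_{k\leq \mloptn}\fLi_{\uk}^t\fOp_{\uk}^{-1}\fLi_{\uk}\cdot\lognn$ is, up to the monotonicity built into the max, exactly the ``variance'' branch $\max(\Opw_m/\Sow_m,x)\sum_{j\leq m}\fLi_j^2/\Opw_j$ of $\cR^\Li_m$ evaluated at $x=\lognn$). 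The main obstacle I anticipate is the concentration step: controlling $|\hLi_k-\hLi_m|^2$ uniformly over $m\leq k\leq\Men$ requires handling both the randomness of the linear statistic $\fou{\hgf}$ and the randomness of $\fhOp^{-1}$ inside it, i.e. a ratio-type statistic, and getting the deviation bound with a constant small enough that $700$ times the natural variance proxy genuinely dominates it — this is the delicate accounting that forces the logarithmic factor and is where the bulk of the appendix work (the analogues of the Goldenshluger--Lepski lemmas, plus the Gaussian operator-norm concentration) will go.
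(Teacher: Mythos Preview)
Your overall architecture---Goldenshluger--Lepski decomposition, then splitting into a good event (empirical quantities comparable to population ones) and a bad event (handled by crude bounds and exponential probability decay)---is exactly the paper's strategy. The paper packages the deterministic step as Lemma~\ref{adaptive:l1}, the good-event concentration as Proposition~\ref{adaptive:p1}, and the bad-event bound as Proposition~\ref{adaptive:p2}; your sketch maps onto these pieces cleanly.

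There is, however, a real gap in your last paragraph. You write that on the good event $\hpen_{\whm}\leq\hpen_{\Men}$ and that ``$\hpen_{\Men}$ is comparable to the deterministic penalty at $\Mun\asymp\mloptn$''. The claim $\Mun\asymp\mloptn$ is false in general: $\Mun$ and $\Mon$ are defined by the threshold $\Opw_m^{-1}\normV{\fLi_{\um}}^2\asymp\nlogn$, whereas $\mloptn$ balances bias against variance and is typically \emph{much smaller}. Consequently $\pen_{\Men}$ (or $\pen_{\Mun}$) is only $O(1)$, not $O(\cR^\Li_*[\lognn;\cdot])$, and this route cannot give the rate. You do not need it: your own first paragraph already contains the correct mechanism. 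The minimizer property reads $\contr_{\whm}+\hpen_{\whm}\leq\contr_m+\hpen_m$ (you stated it backwards), and applying it directly gives $\hpen_{\whm}\leq\contr_m+\hpen_m$ for any admissible reference $m$. So after the three-term split you are left with $\contr_m$, $\hpen_m$, and $|\hLi_m-\Li(\So)|^2$---not $\hpen_{\whm}$---all evaluated at $m=\mloptn$. This is precisely what the paper does in Lemma~\ref{adaptive:l1} and the subsequent display.

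Relatedly, you misplace the role of the hypothesis $\Opw_{\mloptn}^{-1}\fLi_{\umloptn}^t\fLi_{\umloptn}=o(\nlogn)$. It is not used on the bad event; there the crude bound $\mnormV{\fhOp_{\um}^{-1}}\leq n$ together with $P(\esetn^c)\lesssim n^{-7}$ suffices (Proposition~\ref{adaptive:p2}). The hypothesis is needed on the \emph{good} event, to guarantee that the reference index $\mloptn$ is admissible, i.e.\ $\mloptn\leq\Mun\leq\Men$, for all $n\geq n_o$; the finitely many $n<n_o$ are absorbed into the constant. What you need is $\mloptn\leq\Mun$, not $\mloptn\asymp\Mun$.
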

\begin{rem}
The last assertion  states that the data-driven estimator can attain the minimax-rates up to a logarithmic factor for a variety of classes $\cSowr$ and $\cOpwd$. In this sense the
estimator adapts to both the slope function and the covariance operator. This result is derived under the additional condition, $\Opw_{\mloptn}^{-1}\fLi_{\umloptn}^t\fLi_{\umloptn}=o(\nlogn)$ as
$n\to\infty$,  which naturally holds true in the illustrations.\hfill$\square$
\end{rem}
We begin our reasoning by giving a preparatory lemma which constitutes a central step in the following arguments. 
\begin{lem}\label{adaptive:l1}Let $(\phi_k)_{k\geq1}$ be an arbitrary sequence in $\Hspace$ and $\bias:=(\bias_m)_{m\geq1}$  the  sequence of approximation errors  $\bias_m=\sup_{m\leq
    k} \absV{\Li(\So_k-\So)}$ associated with $\Li(\So)$. Consider an  arbitrary  sequence of penalties $\pen:=(\pen_m)_{m\geq 1}$, an upper bound  $M\in\Nz$, and  the sequence $\contr=(\contr_m)_{m\geq1}$ of  contrasts given by $\contr_m:=\max_{m\leq k\leq
  M}\set{\absV{\hLi_k-\hLi_m}^2 -\pen_k }$. If the subsequence  $(\pen_1,\dotsc,\pen_M)$ is non-decreasing, then 
 we have for the  selected model $ \tm:=\argmin\nolimits_{1\leq m\leq M}\set{\contr_m+\pen_m}$ and 
for all $1\leq m\leq M$ that
 \begin{equation}\label{adaptive:l1:e1}
 \absV{\hLi_{\tm}-\Li(\So)}^2 \leq 7\pen_m +78\bias_m^2+42\max_{m\leq k\leq M} \vect{\absV{\hLi_{k}-\Li(\So_k)}^2 -\frac{1}{6}\pen_k}_+
\end{equation}
where $(a)_+=\max(a,0)$.
\end{lem}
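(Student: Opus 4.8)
The plan is to prove the deterministic inequality \eqref{adaptive:l1:e1} by a careful case analysis comparing the selected model $\tm$ with the reference model $m$, exploiting the defining property of $\tm$ as a minimizer of $\contr_\cdot+\pen_\cdot$. First I would split according to whether $\tm\geq m$ or $\tm< m$, and in each case use the triangle inequality $\absV{\hLi_{\tm}-\Li(\So)}^2\leq 2\absV{\hLi_{\tm}-\hLi_{k}}^2+2\absV{\hLi_{k}-\Li(\So)}^2$ for a suitably chosen intermediate index $k$ (either $m$ itself, or $\max(m,\tm)$, or $M$). The point is that each difference $\absV{\hLi_k-\hLi_{m'}}^2$ with $m'\leq k$ is, by definition of the contrast, bounded by $\contr_{m'}+\pen_k$, and the optimality of $\tm$ gives $\contr_{\tm}+\pen_{\tm}\leq\contr_m+\pen_m$. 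Combining these, together with monotonicity of $(\pen_1,\dots,\pen_M)$ so that $\pen_{m'}\leq\pen_k$ whenever $m'\leq k\leq M$, one controls $\absV{\hLi_{\tm}-\hLi_k}^2$ in terms of $\contr_m+\pen_m+\pen_k$.

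The second ingredient is to bound $\contr_m$ itself. By definition $\contr_m=\max_{m\leq k\leq M}\{\absV{\hLi_k-\hLi_m}^2-\pen_k\}$, and writing $\hLi_k-\hLi_m=(\hLi_k-\Li(\So_k))+(\Li(\So_k)-\Li(\So))+(\Li(\So)-\Li(\So_m))+(\Li(\So_m)-\hLi_m)$ and using $(\sum_{i=1}^4 a_i)^2\leq 4\sum a_i^2$ (or a weighted version, since we want the constant $1/6$ on the stochastic term) one separates: the two bias contributions, both bounded by $\bias_m^2$ since $k\geq m$; the stochastic fluctuation at level $k$, namely $\absV{\hLi_k-\Li(\So_k)}^2$, against which we subtract a fraction of $\pen_k$; and the stochastic fluctuation at level $m$, namely $\absV{\hLi_m-\Li(\So_m)}^2$, which is $\leq\max_{m\leq k\leq M}\absV{\hLi_k-\Li(\So_k)}^2$ and can be absorbed into the same maximum term after subtracting a fraction of $\pen_m\leq\pen_k$. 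The residual $-\pen_k$ from the contrast definition is available to be distributed so that the stochastic terms appear in the clean form $(\absV{\hLi_k-\Li(\So_k)}^2-\tfrac16\pen_k)_+$, while the leftover positive multiples of $\pen_k$ get dominated by $\pen_m$ plus small multiples of the same maximum, using again $\pen_k\leq\pen_M$ and monotonicity — actually one must be careful here and keep $\pen_k$ itself, not bound it by $\pen_m$, which is why the final bound carries $7\pen_m$ and the explicit max term rather than a single penalty.

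The bookkeeping of constants is the main obstacle: one has to choose the split weights in the elementary inequality $(a_1+\dots+a_4)^2\leq \sum c_i^{-1}a_i^2$ (with $\sum c_i=1$) and the allocation of the subtracted penalties so that (i) the coefficient in front of $\pen_m$ ends up being exactly $7$, (ii) the coefficient in front of $\bias_m^2$ ends up being $78$, and (iii) the stochastic terms combine into $42\max_{m\leq k\leq M}(\absV{\hLi_k-\Li(\So_k)}^2-\tfrac16\pen_k)_+$ with the fraction $1/6$ matching what the subsequent concentration arguments (Lemma~\ref{app:gauss:l2} and the penalty definition) can handle. I would track these through the two cases ($\tm\geq m$ giving the intermediate index $\tm$, and $\tm<m$ giving the intermediate index $\max(m,M)=M$ or simply $m$), take the worse of the resulting constants, and verify they do not exceed the stated ones; the $(\cdot)_+$ is harmless since whenever a bound of the form $X\leq Y$ holds with $X$ possibly negative we may replace $X$ by $X_+$ on the right after noting the left side is a square hence nonnegative. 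No probabilistic input is needed for this lemma — it is purely algebraic — so once the constant allocation is pinned down the proof is complete.
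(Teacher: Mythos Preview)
Your proposal captures the right ingredients—bounding $\contr_m$ via a decomposition of $\hLi_k-\hLi_m$ through $\Li(\So_k)$ and $\Li(\So_m)$ (your four-term split is equivalent to the paper's three-term one, since $|\Li(\So_k)-\Li(\So_m)|\leq 2\bias_m$), and then combining with the optimality of $\tm$. The paper carries out exactly these two steps, obtaining first $\contr_m\leq 6\max_{m\leq k\leq M}(\absV{\hLi_k-\Li(\So_k)}^2-\tfrac{1}{6}\pen_k)_+ +12\bias_m^2$ and $\absV{\hLi_m-\Li(\So)}^2\leq \tfrac{1}{3}\pen_m+2\bias_m^2+2\max(\cdot)_+$, and then reducing $\absV{\hLi_{\tm}-\Li(\So)}^2$ to a multiple of $\contr_m+\pen_m$ plus $\absV{\hLi_m-\Li(\So)}^2$.

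Where your plan falls short is the second step, in the case $\tm>m$. Your assertion that ``$\absV{\hLi_{\tm}-\hLi_k}^2$ is controlled by $\contr_m+\pen_m+\pen_k$'' is only justified when $\tm\leq k$: then $\absV{\hLi_{\tm}-\hLi_k}^2\leq\contr_{\tm}+\pen_k\leq\contr_m+\pen_m+\pen_k$ by optimality. When $\tm>k=m$ one only has $\absV{\hLi_{\tm}-\hLi_m}^2\leq\contr_m+\pen_{\tm}$, and $\pen_{\tm}$ is not dominated by any fixed multiple of $\pen_m$; taking $k=\tm$ is vacuous, and $k=M$ introduces $\pen_M$, which is just as bad. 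So none of the intermediate indices you list closes this case.

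The paper's device is to avoid the case split altogether: write
\[
\absV{\hLi_{\tm}-\Li(\So)}^2\leq 3\bigl\{\absV{\hLi_{\tm}-\hLi_{\min(m,\tm)}}^2+\absV{\hLi_{\min(m,\tm)}-\hLi_m}^2+\absV{\hLi_m-\Li(\So)}^2\bigr\}
\]
and bound the first two pieces by $\contr_m+\pen_{\tm}$ and $\contr_{\tm}+\pen_m$ respectively (in each case one of the pieces is zero). The point of keeping \emph{both} expressions is that their sum rearranges as $(\contr_{\tm}+\pen_{\tm})+(\contr_m+\pen_m)\leq 2(\contr_m+\pen_m)$ by optimality, so the unwanted $\pen_{\tm}$ is absorbed. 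A two-term triangle inequality keeps only one of $\contr_m+\pen_{\tm}$ or $\contr_{\tm}+\pen_m$ and cannot exploit this cancellation. Once you have $\absV{\hLi_{\tm}-\Li(\So)}^2\leq 6(\contr_m+\pen_m)+3\absV{\hLi_m-\Li(\So)}^2$, inserting the bounds from the first step yields the constants $7$, $78$, $42$ exactly.
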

\begin{proof}[\noindent\textcolor{darkred}{\sc Proof of Lemma \ref{adaptive:l1}.}]
Since   $(\pen_1,\dotsc,\pen_M)$ is non-decreasing it is  easily verified that
\begin{equation*}
  \contr_m\leq 6\max_{m\leq k\leq M} \vect{\absV{\hLi_{k}-\Li(\So_{k})}^2 -\frac{1}{6}\pen_k}_+ + 12\bias_m^2, \quad \forall\, 1\leq m\leq M,
\end{equation*}
where we use that $2\bias_m\geq \max_{m\leq k\leq M} \absV{\Li(\So_k-\So_m)}$. The last estimate implies the inequality
\begin{equation}\label{pr:adaptive:l1:e1} \absV{\hLi_{m}-\Li(\So)}^2 \leq  \frac{1}{3}\pen_m + 2\bias_m^2+ 2 \max_{m\leq k\leq M} \vect{\absV{\hLi_{k}-\Li(\So_{k})}^2 -\frac{1}{6}\pen_k}_+, \, \forall\, 1\leq m\leq M. \end{equation}
On the other hand,   taking  the definition of $\tm$ into account, it is straightforward to see that
\begin{multline*}
  \absV{\hLi_{\tm}-\Li(\So)}^2\leq 3\set{ \absV{\hLi_{\tm}-\hLi_{\min(m,\tm)})}^2+ \absV{\hLi_{\min(m,\tm)}-\hLi_m}^2+ \absV{\hLi_{m}-\Li(\So)}^2}\\
\hfill\leq 3\set{ \contr_m + \pen_{\tm} +\contr_{\tm}+\pen_m +  \absV{\hLi_{m}-\Li(\So)}^2} 
\leq 6\set{\contr_m +\pen_m}+3  \absV{\hLi_{m}-\Li(\So)}^2.
\end{multline*}
From the last estimates and \eqref{pr:adaptive:l1:e1} we obtain the assertion \eqref{adaptive:l1:e1}, which completes the proof.\end{proof}
The proof of Theorem \ref{adaptive:t1} requires in addition to the previous lemma  two technical propositions which we state now.  
 For $n\geq1$ and  a positive sequence $a:=(a_m)_{m\geq1}$ let us introduce $\Mln:=\max\{1\leq m\leq \floor{n^\quart}:\fLi_{\um}^t\fLi_{\um}\leq n\}$ and 
\begin{equation*}  \Mfunc_{n}(a):=\min\set{2\leq m\leq \Mln: a_m\cdot\fLi_{\um}^t\fLi_{\um} >\nlogn}-1
\end{equation*}
where we set  $\Mfunc_n(a):=\Mln$ if the set is empty.  Observe that  $\Men$ given 
in \eqref{estimator:def:Men} satisfies $\Men=\Mfunc_n(a)$ with $a=(\mnormV{\fhOp_{\um}^{-1}})_{m\geq1}$. Consider for $m\geq 1$
\begin{equation*} \sigma^2_m:=2 \Ex Y^2 +2 [\gf]_{\um}^t\fOp_{\um}^{-1}[\gf]_{\um},\qquad\quad V_m:=\max_{1\leq k\leq m}\fLi_{\uk}^t\fOp_{\uk}^{-1}\fLi_{\uk}
\end{equation*}
and define the penalty term
\begin{equation*}
\pen_{m}:=100\,\sigma_m^2\, V_m\,  \lognn,
\end{equation*}
which are obviously  the theoretical counterparts of the random  objects used in the definition of $\whm$. The proof of the next assertion is deferred to the appendix.
\begin{prop}\label{adaptive:p1}Let the conditions of Theorem \ref{adaptive:t1} hold true and denote by $\So_m\in\Sspace_m$ the Galerkin solution of $\gf=\Op\So$. Define $\Mon:=\Mfunc_n(a)$ with  $a=([4\Opd\Opw_j]^{-1})_{j\geq1}$ then there is a
  constant $C(\Opd)>0$ depending on $\Opd$ only such that for all $n\geq1$
 \begin{multline*}
\sup_{\So\in\cSowr}\sup_{\Op\in\cOpwd} \Ex \set{\max_{1\leq m\leq \Mon} \vect{\absV{\hLi_{m}-\Li(\So_m)}^2 -\frac{\pen_{m}}{6}}_+}
\\\leq \frac{C(\Opd)}{n}(\sigma^2+\Sor) \max\set{(\sum_{j\geq1}\Opw_j)^2,\sum_{j\geq1}\frac{\fLi^2_j}{\Sow_j}}.
 \end{multline*}
\end{prop}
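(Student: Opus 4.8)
The plan is to bound the expected supremum by splitting the event according to whether the empirical operator $\fhOp_{\um}$ behaves like its population counterpart $\fOp_{\um}$. On the ``good'' event where the estimator $\hSo_m$ uses the genuine matrix inverse (i.e.\ $\fhOp_{\um}$ non-singular with $\mnormV{\fhOp_{\um}^{-1}}\leq n$) and moreover $\fhOp_{\um}^{-1}$ is comparable to $\fOp_{\um}^{-1}$ up to a fixed constant, one can write $\hLi_m-\Li(\So_m)=\fLi_{\um}^t\fhOp_{\um}^{-1}([\hgf]_{\um}-\fhOp_{\um}\fou{\So_m}_{\um})$ and decompose the centred quantity $[\hgf]_{\um}-\fhOp_{\um}\fou{\So_m}_{\um}=\frac1n\sum_i\fou{X_i}_{\um}(Y_i-\langle\So_m,X_i\rangle)$, which is a mean-zero average. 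Its conditional variance given the $X_i$'s is governed by $\sigma_m^2$ and the relevant quadratic form by $V_m=\max_{k\le m}\fLi_{\uk}^t\fOp_{\uk}^{-1}\fLi_{\uk}$, so a Gaussian/Bernstein-type concentration bound (this is precisely where the normality hypothesis enters, via the bounds of Lemma \ref{app:gauss:l2}) shows that $\absV{\hLi_m-\Li(\So_m)}^2$ exceeds $\pen_m/6=\tfrac{100}{6}\sigma_m^2V_m\lognn$ only with probability exponentially small in $(1+\log n)$, i.e.\ polynomially small in $n$. Summing the resulting tail over $1\leq m\leq\Mon$ (at most $\floor{n^{1/4}}$ terms, and each $\pen_m$ itself of polynomial size) yields a contribution of order $n^{-1}$ times the stated constants; here one uses $\sigma_m^2\leq C(\sigma^2+\Sor)$, which follows from $\So\in\cSowr$ and the normal equation, and the truncation $\fLi_{\um}^t\fLi_{\um}\leq n$ built into $\Mln$ to control $V_m$.

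The second piece is the ``bad'' event, where either $\fhOp_{\um}$ fails the invertibility/norm test so $\hSo_m=0$, or $\fhOp_{\um}^{-1}$ is not comparable to $\fOp_{\um}^{-1}$. The crucial point is that the definition $\Mon=\Mfunc_n([4\Opd\Opw_j]^{-1})$ forces, for every $m\leq\Mon$, the bound $[4\Opd\Opw_m]^{-1}\fLi_{\um}^t\fLi_{\um}\leq\nlogn$; combined with $\Op\in\cOpwd$ (so that $\mnormV{\fOp_{\um}^{-1}}\leq\Opd^2\Opw_m^{-1}$ up to constants) and $\fLi_{\um}^t\fLi_{\um}\leq n$, this keeps $\mnormV{\fOp_{\um}^{-1}}$ well below $n$ with room to spare, so the empirical norm test is satisfied on an event of overwhelming probability. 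A relative-error bound for $\mnormV{\fhOp_{\um}^{-1}-\fOp_{\um}^{-1}}$ — again a consequence of the Gaussian concentration of $\fhOp_{\um}-\fOp_{\um}$ in spectral norm together with the fact that $m\le n^{1/4}$ keeps the dimension small relative to the sample size — shows this bad event has probability at most $Cn^{-p}$ for a large power $p$. On that event $\absV{\hLi_m-\Li(\So_m)}^2$ is at most a crude deterministic bound: if $\hSo_m=0$ it equals $\absV{\Li(\So_m)}^2=\absV{\fLi_{\um}^t\fOp_{\um}^{-1}[\gf]_{\um}}^2$, controlled by $V_m\cdot([\gf]_{\um}^t\fOp_{\um}^{-1}[\gf]_{\um})\leq CV_m(\sigma^2+\Sor)$, while if the norm test passed but the comparison failed one uses $\mnormV{\fhOp_{\um}^{-1}}\leq n$ directly; either way the bound is polynomial in $n$, and multiplying by the small probability and summing over the $\leq n^{1/4}$ values of $m$ gives another $O(n^{-1})$ term. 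Finally $\max_k\fLi_{\uk}^t\fOp_{\uk}^{-1}\fLi_{\uk}\le C\sum_{j\ge1}\Opw_j$-type estimates, together with $\sum_j\fLi_j^2/\Sow_j<\infty$ from Assumption \ref{minimax:ass:reg}, let one absorb everything into the stated $\max\{(\sum_j\Opw_j)^2,\sum_j\fLi_j^2/\Sow_j\}$.

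Putting these together, the expectation of the positive-part maximum is bounded by the ``good-event deviation'' sum plus the ``bad-event'' sum, both $O(n^{-1})$ with the advertised dependence on $\sigma^2$, $\Sor$, $\Opd$ and the two class-dependent sums. I would organize the argument as: (i) a deterministic reduction showing $\absV{\hLi_m-\Li(\So_m)}^2$ is bounded on the good event by a quadratic form in a centred mean; (ii) invocation of the Gaussian concentration inequalities (Lemma \ref{app:gauss:l2}) to estimate both the deviation probability of that quadratic form above $\pen_m/6$ and the probability of the bad event; (iii) a summation over $m\leq\Mon\leq\floor{n^{1/4}}$ using that $\pen_m$ and the crude bounds are polynomial in $n$ while the exceptional probabilities are $n^{-(1+\text{const})}$.

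The main obstacle will be step (ii) in its bad-event form: controlling the event that $\fhOp_{\um}^{-1}$ deviates substantially from $\fOp_{\um}^{-1}$ uniformly over $m\leq\Mon$, and in particular making sure the exponential rate in the concentration bound beats the polynomial factors $\pen_m$ and the $n^{1/4}$ union bound. This is exactly where the two structural restrictions pay off — the exponent $1/4$ in $\Mln=\max\{m\le\floor{n^{1/4}}:\dots\}$ keeps $m^3\mnormV{\fOp_{\um}^{-1}}$ (hence the effective dimension relative to $n$) under control, and the logarithmic slack in the threshold $\nlogn$ upgrades the concentration exponent from a constant to one growing like $1+\log n$, turning $e^{-c(1+\log n)}=n^{-c}$ into something summable against the $n^{1/4}$ terms. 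Verifying that these two effects combine with the right constants (the $100$ in $\pen_m$, the $1/6$ in the positive part, and the factor $4\Opd$ in the definition of $\Mon$) to give a clean $C(\Opd)/n$ bound is the delicate bookkeeping the appendix will have to carry out.
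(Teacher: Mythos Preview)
Your outline matches the paper's proof closely: the good/bad-event split, the representation $\hLi_m-\Li(\So_m)=\fLi_{\um}^t\fhOp_{\um}^{-1}[W]_{\um}\1_{\hOpsetmn}-\Li(\So_m)\1_{\hOpsetmn^c}$ with $[W]_{\um}=[\hgf]_{\um}-[\hOp]_{\um}[\So_m]_{\um}$, and Gaussian concentration on the good event combined with crude moment bounds times small probabilities on the bad event. The paper executes the good-event part via a Neumann expansion: on $\Xisetmn=\{8\sqrt m\,\mnormV{[\Xi]_{\um}}\le1\}$ it splits $\fLi_{\um}^t\fhOp_{\um}^{-1}[W]_{\um}$ into $\fLi_{\um}^t\fOp_{\um}^{-1}[W]_{\um}$ plus a correction bounded by $\frac{1}{7\sqrt m}(\fLi_{\um}^t\fOp_{\um}^{-1}\fLi_{\um})^{1/2}([W]_{\um}^t\fOp_{\um}^{-1}[W]_{\um})^{1/2}$, and controls the positive part of each square separately (Lemma~\ref{app:gauss:l3}). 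Your conditional-variance route, exploiting that $U_m=Y-\HskalarV{\So_m,X}$ is independent of $[X]_{\um}$ under Gaussianity, would reach the same conclusion. The bad-event probabilities are handled by \eqref{app:gauss:l4:e1}--\eqref{app:gauss:l4:e2}.

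There is, however, a genuine error in your final bookkeeping. The claim ``$\max_k\fLi_{\uk}^t\fOp_{\uk}^{-1}\fLi_{\uk}\le C\sum_{j\ge1}\Opw_j$'' is false: by Lemma~\ref{app:pre:l1}(i), $V_m\asymp\sum_{j\le m}\fLi_j^2\Opw_j^{-1}$, which involves $\Opw_j^{-1}$ rather than $\Opw_j$ and typically grows with $m$; indeed $V_{\Mon}$ is only bounded by $Cn/(1+\log n)$ (Lemma~\ref{app:pre:l1}(ii)). The factor $(\sum_j\Opw_j)^2$ in the statement does \emph{not} come from bounding $V_m$. It enters through $(\Ex\HnormV{X}^2)^2\le\Opd^2(\sum_j\Opw_j)^2$ in the eighth-moment bounds \eqref{app:gauss:l2:e1}--\eqref{app:gauss:l2:e2} for $\mnormV{[\Xi]_{\um}[\Op]_{\um}^{1/2}}$ and $\normV{[W]_{\um}}$, which are applied on the bad event $\Xisetmn^c$ after a Cauchy--Schwarz splitting. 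Similarly, $\sum_j\fLi_j^2/\Sow_j$ arises from $|\Li(\So_m)|^2\le\normV{\So_m}_\Sow^2\sum_j\fLi_j^2/\Sow_j$ on $\hOpsetmn^c$ via \eqref{minimax:l1:e5}. So the structural argument is sound, but your identification of where the two class-dependent sums enter needs correction.
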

Additionally, let us introduce  for $n\geq1$ the random integer  $\Mun:=\Mfunc_n\big(a\big)$ with the sequence $a=(16\Opd^3\Opw_j^{-1})_{j\geq1}$. In the following we decompose the risk with respect to an
event $\esetn$, and respectively  its complement $\esetn^c$, on which  $\hpen$ and $\Men$ are comparable to their theoretical counterparts.  To be more precise, we define the event
\begin{equation*}\esetn:= \set{\forall\,1\leq m\leq \Mon :   \pen_{m}\leq\hpen_{m}\leq 24
\pen_{m}}\cap\set{\Mun\leq \Men\leq \Mon}
\end{equation*}
 and consider the elementary identity 
  \begin{multline}\label{adaptive:decomp}
\sup_{\So\in\cSowr}\sup_{\Op\in\cOpwd} \Ex \absV{\hLi_{\whm}-\Li(\So)}^2
=\sup_{\So\in\cSowr}\sup_{\Op\in\cOpwd} \Ex \big(\absV{\hLi_{\whm}-\Li(\So)}^2\1_{\esetn}\big) \\+\sup_{\So\in\cSowr}\sup_{\Op\in\cOpwd} \Ex  \big(\absV{\hLi_{\whm}-\Li(\So)}^2\1_{\esetn^c}\big).
\end{multline}
The next proposition states that the second right hand side term is bounded up to a constant by $n^{-1}$ and is hence
negligible.
The proof  is deferred to the appendix.
\begin{prop}\label{adaptive:p2}
Let the conditions of Theorem \ref{adaptive:t1} hold true. If we consider the fully data-driven choice $\whm$ given in \eqref{estimator:def:whm} then there exists a constant  $C(\Opd)>0$   depending on
$\Opd$ only such that for all $n\geq1$
 \begin{equation*}
\sup_{\So\in\cSowr}\sup_{\Op\in\cOpwd} \Ex\big( \absV{\hLi_{\whm}-\Li(\So)}^2\1_{\esetn^c}\big)\leq\frac{ C(\Opd)}{n}\,(\sigma^2+\Sor)\max\set{ \sum_{j\geq1}\Opw_j, \sum_{j\geq1}\frac{\fLi_j^2}{\Sow_j}}.
 \end{equation*}
\end{prop}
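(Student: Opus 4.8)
The plan is to bound the contribution of the event $\esetn^c$ by controlling separately each of the "bad" deviations that can spoil the comparison of $\hpen$ and $\Men$ with their theoretical counterparts. First I would write $\esetn^c$ as a union of a handful of events: the event that some $\hpen_m$ is smaller than $\pen_m$, the event that some $\hpen_m$ exceeds $24\pen_m$ (both for $1\le m\le\Mon$), and the events $\set{\Men>\Mon}$ and $\set{\Men<\Mun}$. On each of these I would use a crude deterministic bound on $\absV{\hLi_{\whm}-\Li(\So)}^2$ together with a probability estimate of order $n^{-p}$ for suitable $p$. For the crude bound, note that by construction $\mnormV{\fhOp_{\um}^{-1}}\le n$ whenever $\hLi_m$ is the thresholded estimator, and $\whm\le\Men\le\Mln\le\floor{n^\quart}$, so $\absV{\hLi_{\whm}}^2=\absV{\fLi_{\underline{\whm}}^t\fou{\hSo_{\whm}}_{\underline{\whm}}}^2\le \fLi_{\underline{\whm}}^t\fLi_{\underline{\whm}}\cdot n\cdot\absV{\fou{\hgf}_{\underline{\whm}}}^2\le n^2\,\frac1n\sum_iY_i^2\cdot(\text{poly in }n)$; combined with $\absV{\Li(\So)}^2\le\Sor$ (using $\sum\fLi_j^2\Sow_j^{-1}<\infty$ and $\So\in\cSowr$) this gives $\absV{\hLi_{\whm}-\Li(\So)}^2\le C\,n^{c}\,\big(1+\frac1n\sum_iY_i^2\big)$ for some fixed power $c$. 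The Gaussian moment bounds of Lemma \ref{app:gauss:l2} control $\Ex\big[(1+\tfrac1n\sum_iY_i^2)^2\big]$ uniformly, so after Cauchy--Schwarz it suffices to show $\Prob(\esetn^c)\le C(\Opd)\,n^{-2c-1}$.

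The probability estimate is the heart of the argument. The key observation is that all four constituent events of $\esetn^c$ are captured by the event that the empirical operator $\fhOp_{\um}$ (hence $\mnormV{\fhOp_{\um}^{-1}}$) and the empirical quantities $\frac1n\sum_iY_i^2$ and $\fou{\hgf}_{\um}^t\fhOp_{\um}^{-1}\fou{\hgf}_{\um}$ deviate substantially from $\fOp_{\um}$, $\Ex Y^2$ and $\sigma_m^2$ respectively, for some $m\le\Mon\le\floor{n^\quart}$. For the penalty comparison: on the complement of such deviation events one has $\mnormV{\fhOp_{\um}^{-1}}\le 2\mnormV{\fOp_{\um}^{-1}}$ and $\mnormV{\fOp_{\um}^{-1}}\le 2\mnormV{\fhOp_{\um}^{-1}}$ (so the random $V_m$-analogue and the deterministic $V_m$ are comparable), and similarly $\sigma_m^2/2\le(\text{empirical version})\le 6\sigma_m^2$; chasing the numerical constants $700$ versus $100$ and the factors yields $\pen_m\le\hpen_m\le 24\pen_m$. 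For the sandwiching of $\Men$: by the very definition of $\Mfunc_n(\cdot)$, the inclusions $\Mun\le\Men\le\Mon$ follow from the two-sided bound $[4\Opd\Opw_j]^{-1}\le\tfrac12\mnormV{\fhOp_{\uj}^{-1}}$ and $\mnormV{\fhOp_{\uj}^{-1}}\le 16\Opd^3\Opw_j^{-1}$ for all $j$ up to the relevant index — and these in turn follow from $\mnormV{\fhOp_{\uj}^{-1}}$ being within a factor $2$ of $\mnormV{\fOp_{\uj}^{-1}}$ together with the a priori bound $\Opd^{-1}\Opw_j\le\lambda_j(\Op)\le\Opd\Opw_j$ from $\Op\in\cOpwd$. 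So everything reduces to: for all $m\le\floor{n^\quart}$, the event $\set{\mnormV{\fOp_{\um}^{1/2}\fhOp_{\um}^{-1}\fOp_{\um}^{1/2}-\mathrm{Id}}>1/2}$ and the analogous events for $\frac1n\sum Y_i^2$ have probability at most $C(\Opd)n^{-p}$ with $p$ as large as we please — which is exactly the type of concentration statement provided by Lemma \ref{app:gauss:l2} (one takes a union bound over the at most $n^\quart$ values of $m$, which costs only a factor $n^\quart$).

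The step I expect to be the main obstacle is making the deterministic "worst-case" bound on $\absV{\hLi_{\whm}-\Li(\So)}^2$ honest while keeping the resulting polynomial power $c$ explicit and finite, so that the concentration lemma (which gives inverse-polynomial probabilities with a tunable exponent) actually wins. One must be careful that $\hLi_{\whm}$ involves $\fhOp_{\um}^{-1}$, whose spectral norm is only bounded by $n$ on the relevant event, and that $\fou{\hgf}_{\um}$ is an empirical average whose squared norm has only polynomial (in $n$) moments; the cleanest route is to use the threshold $\mnormV{\fhOp_{\um}^{-1}}\le n$ built into $\hSo_m$, the bound $\whm\le\floor{n^\quart}$, and $\fLi_{\underline{\whm}}^t\fLi_{\underline{\whm}}\le n$ (which holds since $\whm\le\Mln$), and then invoke Lemma \ref{app:gauss:l2} for the uniform control of the low-order moments of $\frac1n\sum_iY_i^2$ and of $\absV{\fou{\hgf}_{\um}}^2$. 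Once $c$ is pinned down, choosing the free exponent in the concentration bound to exceed $2c+1$ closes the argument and produces the claimed $n^{-1}$ rate with a constant depending only on $\Opd$, $\sigma^2$, $\Sor$, and the summability constants $\sum_j\Opw_j$ and $\sum_j\fLi_j^2\Sow_j^{-1}$.
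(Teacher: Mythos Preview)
Your proposal is correct and matches the paper's strategy: bound $|\hLi_{\whm}-\Li(\So)|^2$ crudely by a polynomial in $n$ times a random variable with uniformly bounded moments, apply Cauchy--Schwarz, and absorb the polynomial via a sufficiently strong bound on $P(\esetn^c)$ coming from Gaussian concentration (the paper formalizes your decomposition of $\esetn^c$ through the events $\asetn,\bsetn,\csetn$ of Lemma~\ref{app:pre:l3} and Lemma~\ref{app:pre:l2} for the sandwiching of $\Men$). The one cosmetic difference is that the paper centers first, writing $\hLi_m-\Li(\So_m)=\fLi_\um^t\fhOp_\um^{-1}[W]_{\um}\,\1_{\hOpsetmn}$ with $[W]_{\um}=[\hgf]_{\um}-[\hOp]_{\um}[\So_m]_{\um}$; since $(\Ex\normV{[W]_{\um}}^4)^{1/2}$ is of order $n^{-1}$ this saves one power of $n$ in the crude bound, so the fixed estimate $n^7P(\esetn^c)\leq C$ of Lemma~\ref{app:gauss:l4} already suffices and no tuning of the concentration exponent is needed.
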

We are now in position to prove  Theorem \ref{adaptive:t1}.
\begin{proof}[\noindent\textcolor{darkred}{\sc Proof of Theorem \ref{adaptive:t1}.}]In the following we will denote by $C(\Opd)>0$ a constant depending on $\Opd$ only,  which may
  change from line to line. From  the elementary identity \eqref{adaptive:decomp} and Proposition \ref{adaptive:p2} we derive for all $n\geq1$
  \begin{multline}\label{pr:adaptive:t1:e1}
\sup_{\So\in\cSowr}\sup_{\Op\in\cOpwd} \Ex \absV{\hLi_{\whm}-\Li(\So)}^2\leq\sup_{\So\in\cSowr}\sup_{\Op\in\cOpwd} \Ex \big(\absV{\hLi_{\whm}-\Li(\So)}^2\1_{\esetn}\big)\\ +\frac{C(\Opd)}{n}(\sigma^2+\Sor)\max\set{ \sum_{j\geq1}\Opw_j, \sum_{j\geq1}\frac{\fLi_j^2}{\Sow_j}}.
\end{multline}
We observe that the random subsequence $(\hsigma^2_1,\dotsc,\hsigma^2_{\Men})$,  and hence  $(\hpen_1,\dotsc,\hpen_{\Men})$,  are by construction
non-decreasing. Furthermore, we observe that for all $1\leq m\leq k\leq \Men $ the identity $\HskalarV{\hOp(\hSo_k-\hSo_m),(\hSo_k-\hSo_m)}=[\hgf]_{\uk}^t\fhOp_{\uk}^{-1}[\hgf]_{\uk}-[\hgf]_{\um}^t\fhOp_{\um}^{-1}[\hgf]_{\um}$ holds true. Therefore, it follows
by using that $\hOp$ is positive definite  that $[\hgf]_{\um}^t\fhOp_{\um}^{-1}[\hgf]_{\um}\leq [\hgf]_{\uk}^t\fhOp_{\uk}^{-1}[\hgf]_{\uk}$, and hence $\hsigma^2_m\leq \hsigma_k^2$.
Consequently, Lemma \ref{adaptive:l1}  is applicable for all $1\leq m \leq \Men$ and we obtain
 \begin{equation*}
 \absV{\hLi_{\whm}-\Li(\So)}^2 \leq 7\hpen_m +78\bias_m^2+42\max_{m\leq k\leq \Men} \vect{\absV{\hLi_k-\Li(\So_k)}^2 -\frac{1}{6}\hpen_k}_+.
\end{equation*}
On the event $\esetn$ we  deduce from the last bound that for all $1\leq m\leq \Mun$
\[\absV{\hLi_{\whm}-\Li(\So)}^2\1_{\esetn}\leq 504\pen_m +78\bias_m^2+42{\max_{1\leq m\leq \Mon} \vect{\absV{\hLi_{m}-\Li(\So_m)}^2 -\frac{1}{6}\pen_{m}}_+}.\]
Taking Lemma \ref{app:pre:l1} (v) in the appendix into account it follows for all $n\geq1$
\begin{multline*}
\sup_{\So\in\cSowr}\sup_{\Op\in\cOpwd} \Ex \big(\absV{\hLi_{\whm}-\Li(\So)}^2\1_{\esetn}\big)  \leq C(\Opd)(\sigma^2+\Sor)\min_{1\leq m\leq \Mun}\,\cR^\Li_m\big[\lognn;\cSowr,\cOpwd\big]\\\hfill +\sup_{\So\in\cSowr}\sup_{\Op\in\cOpwd}\Ex\set{\max_{1\leq m\leq \Mon} \vect{\absV{\hLi_{m}-\Li(\So_m)}^2 -\frac{1}{6}\pen_{m}}_+}.
\end{multline*}
Moreover, Proposition \ref{adaptive:p1} and  \eqref{pr:adaptive:t1:e1} imply for all $n\geq1$ that
\begin{multline}\label{pr:adaptive:t1:e2}
\sup_{\So\in\cSowr}\sup_{\Op\in\cOpwd} \Ex \absV{\hLi_{\whm}-\Li(\So)}^2  \leq C(\Opd) (\sigma^2+\Sor)\max\set{ \sum_{j\geq1}\Opw_j, \sum_{j\geq1}\frac{\fLi_j^2}{\Sow_j}}\\ \cdot\min_{1\leq m\leq \Mun}\,\cR^\Li_m\big[\lognn;\cSowr,\cOpwd\big] 
\end{multline}
where we use that $\cR^\Li_m\big[\lognn;\cSowr,\cOpwd\big]\geq n^{-1}$ for all $m\geq 1$. Under the additional condition $\Opw_{\mloptn}^{-1}\fLi_{\umloptn}^t\fLi_{\umloptn}=o(\nlogn)$ it is easily verified that  there exists an integer $n_o$ only depending on the sequences $\Sow$, $\Opw$ and $\fLi$ such that for all $n\geq n_o$ we have $\mloptn\leq \Mun$ and
  \[\min_{1\leq m\leq \Mun}\,\cR^\Li_m[\lognn;\cSowr,\cOpwd]=\cR^\Li_*[\lognn;\cSowr,\cOpwd].\]However,  in case $n<n_o$ we employ that \[\cR^\Li_1[\lognn;\cSowr,\cOpwd]\leq
  \max(1,\lognn)\sum_{j>1}\frac{\fLi_j^2}{\Sow_j}\leq \sum_{j\geq1}\frac{\fLi_j^2}{\Sow_j}\] and consequently we derive the bound 
\[\min_{1\leq m\leq \Mun}\,\cR^\Li_m[\lognn;\cSowr,\cOpwd]\leq n^{-1} n_o \sum_{j\geq1}\frac{\fLi_j^2}{\Sow_j}, \quad \mbox{ for all }n<n_o.\] The combination of both cases  yields for all $n\geq 1$  
 \[\min_{1\leq m\leq \Mun}\,\cR^\Li_m[\lognn;\cSowr,\cOpwd]\leq n_o  \sum_{j\geq1}\frac{\fLi_j^2}{\Sow_j} \cR^\Li_*[\lognn;\cSowr,\cOpwd].\]
As $n_o$ depends only on the sequences $\Sow$, $\Opw$ and $\fLi$, we derive the result of the theorem from the previous display together with \eqref{pr:adaptive:t1:e2}, which completes the proof.
\end{proof}
\begin{rem}\label{Adaptive:R1}Recall that the  estimator $\hLi_{\moptn}$ with optimally chosen dimension parameter $\moptn$  is minimax-optimal, i.e, its maximal risk
$\cR^\Li[\hLi_{\moptn};\cSowr,\cOpwd]$  can be bounded up to a constant by the lower bound $\cR^\Li_*[n^{-1};\cSowr,\cOpwd]$. However, due to Theorem \ref{adaptive:t1} the maximal risk of the fully adaptive estimator 
 is bounded by a multiple of $\cR^\Li_*[\lognn;\cSowr,\cOpwd]$. The appearance of the logarithmic factor within the rate is a known fact in the context of local estimation. It is
 widely considered as an acceptable price for adaptation  (in the context of non-parametric Gaussian regression it is unavoidable as shown in \cite{BrownLow1996}).\hfill$\square$\end{rem}

\paragraph{Illustration continued.}
In the configurations defined below Assumption \ref{minimax:ass:reg} 
the additional condition   $\Opw_{\mloptn}^{-1}\fLi_{\umloptn}^t\fLi_{\umloptn}=o(\nlogn)$
as $n\to\infty$
 is easily verified. Therefore, the maximal risk of the fully adaptive estimator is bounded by a multiple of $\cR^\Li_*[\lognn;\cSowr,\cOpwd]$ due to Theorem \ref{adaptive:t1}.  
In the next assertion we state its order  in the considered cases and we omit  the  straightforward calculations.
\begin{prop} Assume an i.i.d.\ sample of $(Y,X)$ of size $n$ obeying \eqref{intro:model} and let the joint distribution of the random function $X$ and the error $\epsilon$ be normal.
 The obtainable rate of convergence is determined by the orders of\, 
$\cR^\Li_*[\lognn;\cSowr,\cOpwd]$  as given below.
\begin{enumerate}
 \item[\textit{(pp)}]
 If  $p>0$,  $a>1/2$, $p+a\geq 3/2$
 and
  $s>1/2-p$, then\\[1ex]
$\cR^\Li_*[\lognn;\cSowr,\cOpwd]\asymp
\begin{cases}
 \big(n^{-1}\log n\big)^{(2p+2s-1)/(2p+2a)}, &\text{if $s-a<1/2$}\\
  n^{-1}(\log n)^2, &\text{if $s-a=1/2$}\\
 n^{-1}\log n, &\text{if $s-a>1/2$}.
\end{cases}$

\item[\textit{(pe)}] If
 $p>0$,  $a>0$, and if  $s>1/2-p$, then \\[1ex]
$\cR^\Li_*[\lognn;\cSowr,\cOpwd]\asymp
(\log n)^{-(2p+2s-1)/(2a)}$.

\item[\textit{(ep)}] 
If 
$p>0$,
 $a>1/2$ and $s\in\Rz$ then\\[1ex]  
 $ \cR^\Li_*[\lognn;\cSowr,\cOpwd]\asymp
\begin{cases}
n^{-1}(\log n)^{(2p+2a-2s+1)/(2p)} , &\text{if $s-a<1/2$}\\
n^{-1}(\log n)(\log\log n), &\text{if $s-a=1/2$}\\
n^{-1}\log n, &\text{if $s-a>1/2$}.
\end{cases}
$
 \end{enumerate}
\end{prop}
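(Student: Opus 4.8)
The plan is to reduce the computation to the minimax orders already recorded in Section~\ref{sec:romt}, where it is shown that $\cR^\Li_*[n^{-1};\cSowr,\cOpwd]\asymp\cR^\Li[\hLi_{\mstarn};\cSowr,\cOpwd]$ and the orders of the latter are listed for \textit{(pp)}, \textit{(pe)}, \textit{(ep)}. The key observation is that $\lognn=(\nlogn)^{-1}$; hence, writing $\tilde n:=\nlogn=n(1+\log n)^{-1}$ for the ``effective sample size'' (which satisfies $1\leq\tilde n$ for every $n\geq1$ and $\tilde n\to\infty$), one has the \emph{exact} identity $\cR^\Li_*[\lognn;\cSowr,\cOpwd]=\cR^\Li_*[\tilde n^{-1};\cSowr,\cOpwd]$. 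Now $\cR^\Li_m[\tilde n^{-1};\cSowr,\cOpwd]$ is precisely the functional whose minimum over $m$ was analysed in the minimax review, with the sample size there replaced by the real parameter $\tilde n$. Thus the order of $\cR^\Li_*[\tilde n^{-1};\cSowr,\cOpwd]$ follows from the order of $\cR^\Li_*[n^{-1};\cSowr,\cOpwd]$ stated in Section~\ref{sec:romt} simply by replacing $n$ with $\tilde n$, once one has checked that the hypotheses under which those orders hold remain valid; in each configuration this is done exactly as for the genuine sample size (Assumption~\ref{minimax:ass:reg}, boundedness from below of $\min(\Sow_{\mloptn}(\tilde n\,\Opw_{\mloptn})^{-1},\tilde n\,\Opw_{\mloptn}\Sow_{\mloptn}^{-1})$, and $\sup_{m\geq1}m^3\Opw_m\Sow_m^{-1}<\infty$), the last being exactly where $p+a\geq3/2$ enters in case \textit{(pp)} and being automatic in the other two.

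It then remains to translate the orders back to $n$. From $\tilde n=n(1+\log n)^{-1}\asymp n(\log n)^{-1}$ we get $\tilde n^{-\alpha}\asymp n^{-\alpha}(\log n)^{\alpha}=(n^{-1}\log n)^{\alpha}$ for every $\alpha>0$, together with $\log\tilde n=\log n-\log(1+\log n)\sim\log n$ and $\log\log\tilde n\sim\log\log n$ as $n\to\infty$. Substituting into the three minimax formulas gives the asserted rates: in \textit{(pp)}, $\tilde n^{-(2p+2s-1)/(2p+2a)}\asymp(n^{-1}\log n)^{(2p+2s-1)/(2p+2a)}$ for $s-a<1/2$, $\tilde n^{-1}\log\tilde n\asymp n^{-1}(\log n)^2$ for $s-a=1/2$, and $\tilde n^{-1}\asymp n^{-1}\log n$ for $s-a>1/2$; in \textit{(pe)}, $(\log\tilde n)^{-(2p+2s-1)/(2a)}\asymp(\log n)^{-(2p+2s-1)/(2a)}$, so the logarithmic rate is unaffected; in \textit{(ep)}, $\tilde n^{-1}(\log\tilde n)^{(2a-2s+1)/(2p)}\asymp n^{-1}(\log n)^{(2p+2a-2s+1)/(2p)}$ for $s-a<1/2$, $\tilde n^{-1}\log\log\tilde n\asymp n^{-1}(\log n)(\log\log n)$ for $s-a=1/2$, and $\tilde n^{-1}\asymp n^{-1}\log n$ for $s-a>1/2$. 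Since $\asymp$ only concerns the behaviour as $n\to\infty$, the finitely many small $n$, for which every quantity is bounded away from $0$ and $\infty$, are irrelevant. There is no genuine obstacle beyond bookkeeping: one must make sure the technical side conditions of the minimax review survive the passage to $\tilde n$ (they do, being uniform in the sample size), and in the two boundary regimes $s-a=1/2$ that replacing $\log n$, respectively $\log\log n$, by $\log\tilde n$, respectively $\log\log\tilde n$, leaves the logarithmic order unchanged, which is immediate from $\log\tilde n\sim\log n$.

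A fully self-contained alternative, bypassing the minimax orders altogether, is to compute directly in each configuration the orders of $\sum_{j>m}\fLi_j^2\Sow_j^{-1}$, of $\sum_{j=1}^m\fLi_j^2\Opw_j^{-1}$ and of $\Opw_m\Sow_m^{-1}$ — all elementary for $\fLi_j^2=|j|^{-2s}$ and the given sequences $\Sow$, $\Opw$ — then to split the minimisation over $m$ according to whether $\Opw_m\Sow_m^{-1}\geq\lognn$ or $\Opw_m\Sow_m^{-1}<\lognn$, and to balance the two terms inside $\cR^\Li_m[\lognn;\cSowr,\cOpwd]$ at the resulting optimal $m$. This reproduces the same orders, but is longer, which is why the shorter route above is the one I would follow.
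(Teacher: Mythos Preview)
Your proposal is correct. The paper itself omits the proof entirely, stating just before the proposition that ``we omit the straightforward calculations''; your substitution $\tilde n=n(1+\log n)^{-1}$, which reduces the problem to the orders of $\cR^\Li_*[n^{-1};\cSowr,\cOpwd]$ already listed in the minimax illustration of Section~\ref{sec:romt}, followed by the elementary translations $\tilde n^{-1}\asymp n^{-1}\log n$, $\log\tilde n\sim\log n$ and $\log\log\tilde n\sim\log\log n$, is a clean way to carry out those calculations, and your case-by-case verification is accurate. One minor remark: the side conditions you take care to check (boundedness from below of $\min(\Sow_{\mloptn}(\tilde n\Opw_{\mloptn})^{-1},\tilde n\Opw_{\mloptn}\Sow_{\mloptn}^{-1})$ and $\sup_{m\geq1} m^3\Opw_m\Sow_m^{-1}<\infty$) are needed in Section~\ref{sec:romt} only to link $\cR^\Li_*$ to the genuine minimax risk, not for determining the order of the purely deterministic quantity $\cR^\Li_*[x;\cSowr,\cOpwd]$ as $x\to0$; since the present proposition concerns only the latter, they are not actually required, though including them does no harm.
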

We shall briefly compare these rates with the corresponding minimax optimal rates derived in Section  \ref{sec:romt} above. Surprisingly they coincide in case \textit{(pe)}, and
hence the fully data-driven estimator is minimax-optimal. The rates given in case \textit{(pp)} coincides with the ones that have been obtained by  \cite{GoldPere2000} for  an {\it
  a priori} known operator. In comparison to the  minimax optimal rates the cases  \textit{(pp)} and  \textit{(ep)}  feature  a deterioration of logarithmic order as expected (compare Remark \ref{Adaptive:R1}). 

\section{Examples: point-wise and local average estimation}\label{sec:ex}
Consider $\Hspace=L^2[0,1]$ with its usual norm and inner product and  the trigonometric basis
\begin{equation*}\label{bm:def:trigon:1}
\psi_{1}:\equiv1, \;\psi_{2j}(s):=\sqrt{2}\cos(2\pi j s),\; \psi_{2j+1}(s):=\sqrt{2}\sin(2\pi j s),s\in[0,1],\; j\in\Nz.\end{equation*} 
Recall the typical choices of the sequences $ \Sow$  and $\Opw$ as introduced in the illustrations above.
If  $\Sow_j\asymp |j|^{2p}$ for a positive integer $p$, see  cases \textit{(pp)} and \textit{(pe)}, then  
 the subset $\cSow:=\{h\in\Hspace: \normV{h}_{\Sow}^2<\infty\}$ coincides with the Sobolev space  of $p$-times differential periodic functions
(c.f. \cite{Neubauer1988,Neubauer88}). 
In the case \textit{(ep)}
  it is well-known that for 
$p>1$ every element of
$\cSow$ is 
an analytic function (c.f. \cite{Kawata1972}).
Furthermore we consider a polynomial decay of $\Opw$ with $a>1/2$ in the cases \textit{(pp)} and \textit{(ep)}.
Easy calculus 
shows that the covariance operator $\Op\in\cOpwd$ acts for integer $a$  
    like integrating   $(2a)$-times and is hence called 
{\it finitely smoothing} (c.f. \cite{Natterer84}). 
In the  case \textit{(pe)} we assume   an exponential decay of $\Opw$ and
it is 
 easily  seen that  the range of  $\Op\in \cOpwd$ is a subset of $C^\infty[0,1]$, 
therefore  the operator
is called {\it infinitely smoothing} (c.f. \cite{Mair94}).

\paragraph{Point-wise estimation.}  By  \textit{evaluation   in a given point}
$t_0 \in [0,1]$ we mean the linear functional $\Li_{t_0}$ mapping  $h$ to $h(t_0):=\Li_{t_0}(h)=\sum_{j=1}^\infty [h]_j\bas_j(t_0)$.
In the following we shall assume that the point evaluation is well-defined on
the set of slope parameters $\cSow$ which is obviously implied by  $\sum_{j=1}^\infty[\Li_{t_0}]_j^2\Sow_j^{-1}<\infty$. 
Consequently, the condition  $\sum_{j\geq1}\Sow_j^{-1}<\infty$  is sufficient  to guarantee that the point evaluation is well-defined on $\cSow$.
Obviously, in case  \textit{(ep)} or in other words  for exponentially increasing $\Sow$,  this additional condition is automatically satisfied. However, a polynomial increase, as in the cases  \textit{(pp)} and \textit{(pe)},  
requires the  assumption $p>1/2$. Roughly speaking, this means that the slope parameter has at least to be  continuous. In order to estimate the value
$\So (t_0)$ we consider the plug-in estimator 
\begin{equation*}
\hLi_{t_0}^m
=
\left\{\begin{array}{lcl} 
[\Li_{t_0}]_{\um}^t\fhOp_{\um}^{-1} [\widehat{g}]_{\um}, && \mbox{if $\fhOp_{\um}$ is non-singular and }\mnormV{\fhOp^{-1}_{\um}}\leq  n,\\
0,&&\mbox{otherwise},
\end{array}\right.
\end{equation*}
with $[\Li_{t_0}]_{\um}=(\psi_1(t_0),\dotsc,\psi_m(t_0))^t$. Moreover, we observe that $\hLi_{t_0}^m=\Li_{t_0}(\hSo_m)=\hSo_m(t_0)$.\\[1ex]
{\it Minimax optimal point-wise estimation.} The estimator's maximal mean squared error over
the classes
 $\cSowr$ and $\cOpwd$ is uniformly bounded for all $t_0\in[0,1]$ up to a constant by $\cR^{\Li_{t_0}}_*[n^{-1};\cSowr,\cOpwd]$, i.e., 
$
\sup_{\So  \in \cSowr} \sup_{\Op \in \cOpwd}\Ex|\hSo_{\moptn}(t_0) -\So (t_0)|^2
\leq C\, \cR^{\Li_{t_0}}_*[n^{-1};\cSowr,\cOpwd]$ for some $C>0$, which is  the minimax-optimal rate of convergence (c.f. \cite{JohannesSchenk2010}).\\[1ex]
{\it Illustration continued.} We derive with  $[\Li_{t_0}]^2_j\asymp j^{-2s}$ and $s=0$ in the considered cases :
\begin{itemize}
 \item[\textit{(pp)}] 
If  $p>1/2$, $a>1/2$ and  $p+a\geq3/2$, then $\cR^{\Li_{t_0}}_*[n^{-1};\cSowr,\cOpwd]\asymp n^{-(2p-1)/(2p+2a)}$.
 \item[\textit{(pe)}]  If $p>1/2$ and $a>0$, then $\cR^{\Li_{t_0}}_*[n^{-1};\cSowr,\cOpwd]\asymp (\log n)^{-(2p-1)/2a}$.
 \item[\textit{(ep)}] If $p>0$ and $a>1/2$, then $\cR^{\Li_{t_0}}_*[n^{-1};\cSowr,\cOpwd]\asymp n^{-1}(\log n)^{(2a+1)/2p}$.
\end{itemize}
{\it Adaptive point-wise estimation.}  We select the dimension parameter $\whm$ by minimizing  the penalized contrast function over the collection of admissible values.
  The obtainable rate for the fully data-driven estimator $\hSo_{\whm}(t_0)$   in the three considered cases is given as follows:
\begin{itemize}
 \item[\textit{(pp)}] 
If  $p>1/2$, $a>1/2$ and  $p+a\geq3/2$, then 
$
\cR^{\Li_{t_0}}_*[(1+\log n)n^{-1};\cSowr,\cOpwd]
\asymp (n^{-1}\log n)^{(2p-1)/(2p+2a)}$.
 \item[\textit{(pe)}]  If $p>1/2$ and $a>0$, then 
$\cR^{\Li_{t_0}}_*[(1+\log n)n^{-1};\cSowr,\cOpwd]\asymp (\log n)^{-(2p-1)/(2a)}$.
 \item[\textit{(ep)}] If $p>0$ and $a>1/2$, then 
$\cR^{\Li_{t_0}}_*[(1+\log n)n^{-1};\cSowr,\cOpwd]\asymp n^{-1}(\log n)^{(2p+2a+1)/(2p)}$.
\end{itemize}
The proposed fully data-driven point wise estimator is minimax optimal in case (pe)
which is easily seen by comparing the rates of the adaptive estimator with the corresponding minimax rate.
In the other cases, the rates deviate only by logarithmic factor, as expected.

\paragraph{Point-wise estimation of derivatives.}
It is interesting to note that by  slightly adapting the previously presented procedure we are able to  estimate the value of the $\sfs$-th derivative of $ \So $ at $t_0$. 
Given the exponential basis, which is linked 
to the trigonometric basis 
for $k \in \Zz$ and $t \in [0, 1]$
by the relation
  $\exp(2i \pi kt) = 2^{−1/2}
 (\bas_{2k}(t) + i \bas_{2k+1}(t))$ 
 with $i^2 = −1$. We recall that for $0\leq  \sfs < p$ the $\sfs$-th derivative $\So^{(\sfs)}$
 of $\So$ in a weak sense
 satisfies
\[ \So^{
 (\sfs)}(t_0) =
 \sum_{
 k\in\Zz}
 (2i \pi k)^{\sfs}\exp(2i\pi kt_0)
\Bigl(
\int
^1
_0
 \So(u) 
\exp(−2i\pi ku) 
du
\Bigr)
.\]
 Given a dimension $m \geq 1$, we denote now by $\fou{\hOp}_{\um}$ the $(2m + 1) \times (2m + 1)$ matrix with
 generic elements $\HskalarV{\bas_j
 ,\hOp \bas_k }$, $-m \leq j, k \leq m$ and by $\fou{\hgf}_{\um}$ the $(2m + 1)$ vector with elements
 $\HskalarV{\hgf, \bas_j} $, $-m \leq j \leq m$. Furthermore, we define  for integer $\sfs$   the $(2m + 1)$ vector $\fou{\Li^{(\sfs)}_{t_0}}_{\um}$ with elements
$  
\fou{\Li^{(\sfs)}_{t_0}}_{j}:=(2i \pi j)^{\sfs}\exp(2i\pi j t_0)
$, $-m \leq j \leq m$.
In the following we shall assume that the point evaluation of the $\sfs$-th derivative  is well-defined on
the set of slope parameters $\cSow$ which is  implied by  $\sum_{j\geq1}(j^{2\sfs}\Sow_j^{-1})<\infty$,  since $|[\Li_{t_0}^{(\sfs)}]_j|^2\asymp j^{2\sfs}$.
Obviously, this additional condition is automatically satisfied in case  \textit{(ep)} and requires the  assumption $\sfs<p-1/2$ in the cases  \textit{(pp)} and \textit{(pe)}. 
We consider the estimator of $\So
^{(\sfs)}(t_0)=\Li^{(\sfs)}_{t_0}(\So)$
 given by
\[
  \hSo^{(\sfs)}_m({t_0})=\begin{cases}                         \fou{\Li^{(\sfs)}_{t_0} }_{\um}^t
\fou{\hOp}_{\um}^{-1}
\fou{\hgf}_{\um} &
\text{if $\fhOp_{\um}$ is non-singular and }\mnormV{\fhOp^{-1}_{\um}}\leq  n,\\
0,&\text{otherwise.}
                        \end{cases}
\]
{\it Minimax optimal point-wise estimation of derivatives.} 
 The estimator $\hSo^{(\sfs)}_{\moptn}(t_0)$ with  appropriately chosen dimension  is minimax
 optimal, i.e., 
$
\sup_{\So  \in \cSowr} \sup_{\Op \in \cOpwd}\Ex|\hSo^{(\sfs)}_{\moptn}(t_0) -\So^{(\sfs)} (t_0)|^2
\leq C\, \cR^{\Li_{t_0}^{(\sfs)}}_*[n^{-1};\cSowr,\cOpwd]$ for some $C>0$, where  $\cR^{\Li_{t_0}^{(\sfs)}}_*[n^{-1};\cSowr,\cOpwd]$ is  the minimax-optimal rate of convergence (c.f. \cite{JohannesSchenk2010}).\\[1ex]
{\it Illustration continued.} In the considered cases we derive with $s=-\sfs$
\begin{itemize}
 \item[\textit{(pp)}] 
If  $p>1/2$, $a>1/2$ and  $p+a\geq3/2$, then 
$\cR^{\Li_{t_0}^{(\sfs)}}_*[n^{-1};\cSowr,\cOpwd]\asymp n^{-(2p-2\sfs-1)/(2p+2a)}$.
 \item[\textit{(pe)}]  If $p>1/2$ and $a>0$, then 
$\cR^{\Li_{t_0}^{(\sfs)}}_*[n^{-1};\cSowr,\cOpwd]\asymp (\log n)^{-(2p-2\sfs-1)/(2a)}$.
 \item[\textit{(ep)}] If $p>0$ and $a>1/2$, then 
$\cR^{\Li_{t_0}^{(\sfs)}}_*[n^{-1};\cSowr,\cOpwd]\asymp n^{-1}(\log n)
^{(2a+2\sfs+1)/(2p)}$.
\end{itemize}
{\it Adaptive point-wise estimation of derivatives.}  
In the three considered cases the obtainable rate of the fully data-driven estimator $\hSo^{(\sfs)}_{\whm}(t_0)$  is given as follows:
\begin{itemize}
 \item[\textit{(pp)}] 
If  $p>1/2$, $a>1/2$ and  $p+a\geq3/2$, then\\ 
$
\cR^{\Li_{t_0}^{(\sfs)}}_*[(1+\log n)n^{-1};\cSowr,\cOpwd]
\asymp
 (n^{-1} \log n)^{(2p-2\sfs-1)/(2p+2a)}$.
 \item[\textit{(pe)}]  If $p>1/2$ and $a>0$, then\\ 
$\cR^{\Li_{t_0}^{(\sfs)}}_*[(1+\log n)n^{-1};\cSowr,\cOpwd]\asymp (\log n)^{-(2p-2\sfs-1)/2a}$.
 \item[\textit{(ep)}] If $p>0$ and $a>1/2$, then\\ 
$\cR^{\Li_{t_0}^{(\sfs)}}_*[(1+\log n)n^{-1};\cSowr,\cOpwd]\asymp n^{-1}(\log n)^{(2p+2a+2\sfs+1)/2p}$.
\end{itemize}
Also in the situation of adaptively estimating the $(q)$-th derivative in a given point the obtained rates  deteriorate  by a logarithmic factor  in the  cases \textit{(pp)} and \textit{(pe)} only.

\paragraph{Local average estimation.} Next we are  interested in the average value   of $\So $ on the interval $[0,b]$ for $b\in(0,1]$. If we denote the linear functional mapping $h$ to 
$b^{-1}\int_{0}^b h (t)dt$ by $\Li^b$, then it is easily seen that 
$[\Li^b]_1=1$, $[\Li^b]_{2j}=(\sqrt2\pi jb)^{-1}\sin(2\pi jb)$,
 $[\Li^b]_{2j+1}=(\sqrt2\pi j b)^{-1}\cos(2\pi jb)$  for $j\geq1$.  In this situation  the plug-in estimator $ \hLi^b_{m}= b^{-1}\int_0^b\hSo_m(t)dt$ is written as
\begin{equation*}
\hLi^{b}_m
=
\left\{\begin{array}{lcl} 
[\Li^b]_{\um}^t\fou{\hOp}_{\um}^{-1} [\widehat{g}]_{\um}, && \mbox{if $\fou{\hOp}_{\um}$ is non-singular and }\mnormV{\fou{\hOp}^{-1}_{\um}}\leq  n,\\
0,&&\mbox{otherwise}.
\end{array}\right.
\end{equation*}
{\it Minimax optimal estimation of local averages.} The estimator  $\hLi^b_{\moptn}$ attains the minimax optimal rate, i.e., $
\sup_{\So  \in \cF_\Sow^{\Sor}} \sup_{\Op \in \cOpwd}\Ex|\int_0^b\hSo_{\moptn}(t)dt -\int_0^b\So (t)dt|^2
\leq C
\cR^{\Li^{b}}_*[n^{-1};\cSowr,\cOpwd]
$ for $C>0$.\\[1ex]
{\it Illustration continued.} In the three cases  the order of  $\cR^{\Li^{b}}_*[n^{-1};\cSowr,\cOpwd]$ is given as follows:
\begin{itemize}
 \item[\textit{(pp)}] 
If  $p\geq 0$, $a>1/2$ and  $p+a>3/2$, then 
$
\cR^{\Li^{b}}_*[n^{-1};\cSowr,\cOpwd]\asymp n^{-(2p+1)/(2p+2a)}$.
 \item[\textit{(pe)}]  If $p\geq 0$ and $a>0$, then  
$\cR^{\Li^{b}}_*[n^{-1};\cSowr,\cOpwd]\asymp  (\log n)^{-(2p+1)/2a}$.
 \item[\textit{(ep)}] If $p>0$ and $a>1/2$, then 
$
\cR^{\Li^{b}}_*[n^{-1};\cSowr,\cOpwd]\asymp n^{-1}(\log n)^{(2a-1)/2p}$.
\end{itemize}
{\it Adaptive estimation of local averages.}
In the three considered cases the obtainable rate of the adaptive estimator $\hLi^b_{\whm}$  is given below:
\begin{itemize}
 \item[\textit{(pp)}] 
If  $p\geq 0$, $a>1/2$ and  $p+a>3/2$, then 
$
\cR^{\Li^{b}}_*[(1+\log n)n^{-1};\cSowr,\cOpwd] \asymp (n^{-1}\log n)^{(2p+1)/(2p+2a)}$.
 \item[\textit{(pe)}]  If $p\geq 0$ and $a>0$, then 
$
\cR^{\Li^{b}}_*[(1+\log n)n^{-1};\cSowr,\cOpwd]\asymp ~ (\log n)^{-(2p+1)/2a}$.
 \item[\textit{(ep)}] If $p>0$ and $a>1/2$, then 
$
\cR^{\Li^{b}}_*[(1+\log n)n^{-1};\cSowr,\cOpwd]\asymp n^{-1} (\log n)^{(2p+2a-1)/2p}$.
\end{itemize}
In this setting again, we notice a deterioration of logarithmic order  in the  cases \textit{(pp)} and \textit{(pe)} only.
\section*{Appendix}
\renewcommand{\thesubsection}{\Alph{subsection}}
\renewcommand{\theprop}{\Alph{subsection}.\arabic{prop}}
\numberwithin{equation}{subsection}  
\numberwithin{prop}{subsection}  
This section gathers preliminary technical results and the proofs of Proposition \ref{adaptive:p1} and \ref{adaptive:p2}.
\setcounter{subsection}{0}
\subsection{Notations}\label{app:notations} We begin by defining and recalling the notations which are used in the proofs. Given an integer $m\geq 1$, $\Sspace_m$ denotes the subspace of
$\Hspace$ spanned by the functions $\{\bas_1,\dotsc,\bas_m\}$. $\proj_m$ and $\proj_m^\perp$ denote the orthogonal projections on  $\Sspace_m$ and its orthogonal complement $\Sspace_m^\perp$
respectively. If $K$ is an operator mapping $\Hspace$ into itself and  we restrict $\proj_m K \proj_m$ to an  operator from $\Sspace_m$ into itself, then it can be represented by
the matrix $[K]_{\um}$.   Furthermore, $[\Diag_v]_{\um}$ and $\Id_{\um}$ denote the $m$-dimensional diagonal   matrix with diagonal entries $(v_j)_{1\leq j\leq m}$   and the identity
matrix respectively.  With a slight abuse of notations  $\normV{v}$ denotes the euclidean norm of the vector $v$. In particular, for all $f\in\Sspace_m$ we have $\normV{f}_v^2=
[f]_{\um}^t[\Diag_v]_\um[f]_{\um} =\normV{[\Diag_v]_\um^{1/2}[f]_{\um}}^2$. Moreover, we use the notations 
\begin{equation*}
\hV_m= \max\limits_{1\leq k\leq m}\fLi_\uk^t\fhOp_{\uk}^{-1}\fLi_\uk,\; V_m= \max\limits_{1\leq k\leq m}\fLi_\uk^t\fOp_{\uk}^{-1}\fLi_\uk,\; V_m^\Opw= \fLi_{\um}^t[\Diag_\Opw]_\um^{-1}\fLi_{\um}.
\end{equation*}
Recall that $[\hOp]_{\um}=\frac{1}{n}\sum_{i=1}^n[X_i]_{\um}[X_i]_{\um}^t$ and $[\hgf]_{\um}=\frac{1}{n}\sum_{i=1}^nY_i[X_i]_{\um}$ where $[\Op]_{\um}=\Ex \fou{X}_\um\fou{X}^t_\um$
and $[\gf]_\um=\Ex Y\fou{X}_\um$. Given a Galerkin solution $\So_m\in\Sspace_m$, let $U_m:=Y-
\HskalarV{\So_m,X}=\sigma\epsilon+\HskalarV{\So-\So_m,X}$. We introduce $\rho_m^2:=\Ex U^2_m=\sigma^2+\HskalarV{\Op(\So-\So_m),(\So-\So_m)}$, $\sigma_Y^2:=\Ex Y^2= \sigma^2+\HskalarV{\Op
  \So,\So}$ and $\sigma_m^2= 2\big(\sigma_Y^2+[\gf]_{\um}^t\fOp_{\um}^{-1}[\gf]_{\um}\big)$ where we use that $\epsilon$ and $X$ are uncorrelated. With these notations we
have
\begin{multline*}
\pen_m=100\sigma_m^2 V_m \lognn,\quad \hpen_m=700\hsigma_m^2\hV_m\lognn.\hfill
\end{multline*}
Let us define the random matrix $[\Xi]_{\um}$  and random vector $[W]_{\um}$, respectively, by 
\begin{equation*}
[\Xi]_{\um}:= [\Op]_{\um}^{-1/2}[\hOp]_{\um}[\Op]_{\um}^{-1/2} - \Id_{\um},
\quad\mbox{and}\quad [W]_{\um}:= [\hgf]_{\um}- [\hOp]_{\um} [\galsol]_{\um},
\end{equation*}
where $\Ex [\Xi]_{\um}= 0$, because $\Ex[\hOp]_{\um}=[\Op]_{\um}$, and $\Ex [W]_{\um} =[\Op(\So-\So_m)]_{\um}=0$. Furthermore, we introduce $\hsigma_Y^2:=n^{-1}\sum_{i=1}^nY_i^2$ and   the events
\begin{multline}\label{app:pre:e4}
 \hOpsetmn:=\{ \mnormV{[\hOp]^{-1}_{\um}}\leq n \},\quad  \Xisetmn:= \{8\sqrt{m}\mnormV{[\Xi]_{\um}}\leq 1\},\\
\asetn:=\{{1}/{2}\leq \hsigma_Y^2/\sigma_Y^2\leq {3}/{2}\},\quad 
\bsetn:=\{\mnormV{\fou{\Xi}_\uk}\leq 1/8,\forall 1\leq k\leq \Mln\},\\
\csetn:=\{[W]_\uk^t\fOp_{\uk}^{-1}[W]_\uk\leq
  \frac{1}{8}(\fou{\gf}_\uk^t\fOp^{-1}_\uk\fou{\gf}_\uk + \sigma_Y^2) ,\forall 1\leq k\leq \Mln\},\hfill
\end{multline}
along with their respective complements $\hOpsetmn^c$, $\Xisetmn^c$, $\asetn^c$, $\bsetn^c$, and $\csetn^c$. Here and subsequently,   we will denote by $C$ a universal numerical constant  and
by $C(\cdot)$ a constant depending only on the arguments. In both cases, the values of the constants may change with every appearance.
\subsection{Preliminary results}\label{app:pre}
The proof of
the next lemma can be found in \cite{JohannesSchenk2010}. It relies on  the properties of the sequences $\Sow$, $\Opw$ and $[\Li]$ given in  
Assumption \ref{minimax:ass:reg}.
\begin{lem}\label{minimax:l1}
 Let $\Opother$ belong to $\cOpwd$ where the sequence $\Opw$ satisfies Assumption \ref{minimax:ass:reg}, then we have 
\begin{gather}\label{minimax:l1:e1}
\sup_{m\in\Nz}\Bigl\{ \Opw_{m} \normV{[\Opother]_{\um}^{-1}}\Bigr\}\leq 4\Opd^3,\\
\label{minimax:l1:e2}
\sup_{m\in\Nz}\normV{[\Diag_{\Opw}]^{1/2}_{\um}[\Opother]_{\um}^{-1}[\Diag_{\Opw}]^{1/2}_{\um}}\leq 4\Opd^3,\\ \label{minimax:l1:e3}
\sup_{m\in\Nz}\normV{[\Diag_\Opw]^{-1/2}_{\um}[\Opother]_{\um}[\Diag_\Opw]^{-1/2}_{\um}}\leq \Opd.
\end{gather}
Consider in addition  $\So\in \cSowr$  with  sequence $\Sow$  satisfying Assumption \ref{minimax:ass:reg}. If  $\So_m$  denotes a Galerkin solution of $\gf=\Opother\So$ then for any strictly positive sequence $w:=(w_j)_{j\geq1}$ such
 that $w/\Sow$ is  non-increasing we obtain for all $m\in\Nz$
\begin{gather}  \label{minimax:l1:e4}
\normV{\So-\So_{m}}_w^2 \leq   34\, \Opd^8 \, \Sor\,  \frac{w_{m}}{\Sow_{m}}\max\bigg( 1, \frac{\Opw_{m}^2}{w_{m}} \max_{1\leq j\leq m}\bigg\{\frac{w_j}{\Opw_j^2}\bigg\} \bigg),\\\label{minimax:l1:e5}
\normV{\So_m}^2_\Sow\leq  34\, \Opd^8 \, \Sor,\quad \HnormV{\Opother^{1/2}(\So-\So_m)}^2\leq  34\, \Opd^9 \, \Sor\,
\Opw_m\Sow_m^{-1}.
\end{gather}
Furthermore, under Assumption \ref{minimax:ass:reg}   we have
\begin{gather} \label{minimax:l1:e7}
|{\Li(\So-\So_m)}|^2\leqslant
2\,\Sor\,\bigg\{\sum_{j>m}\frac{\fLi_j^2}{\Sow_j}+ 2(1+\Opd^4)\frac{\Opw_{m}}{\Sow_{m}}\sum_{j=1}^{m}\frac{\fLi_j^2}{\Opw_j}\bigg\}.
\end{gather}
\end{lem}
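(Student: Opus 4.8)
\emph{Plan.} Since this lemma is quoted from \cite{JohannesSchenk2010}, I only indicate the structure of its proof. The point is that \eqref{minimax:l1:e1}--\eqref{minimax:l1:e3} merely express that, for every $m$, the matrix $[\Opother]_{\um}$ is two-sidedly comparable to the diagonal matrix $[\Diag_\Opw]_{\um}$, and that \eqref{minimax:l1:e4}--\eqref{minimax:l1:e7} are then deduced from these comparisons together with the monotonicity granted by Assumption \ref{minimax:ass:reg}, namely that $\Opw/\Sow$, $\Opw^2/\Sow$ and (by hypothesis) $w/\Sow$ are non-increasing, while $\Sow/\Opw^2$ is non-decreasing.

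\emph{The matrix estimates.} First I would record that the bound defining $\cOpwd$ says exactly $\Opd^{-2}\HskalarV{D^2h,h}\leq\HnormV{\Opother h}^2\leq\Opd^2\HskalarV{D^2h,h}$ for all $h\in\Hz$, where $D$ is the bounded self-adjoint operator on $\Hz$ with $D\bas_j=\Opw_j\bas_j$; that is, $\Opd^{-2}D^2\leq\Opother^2\leq\Opd^2D^2$ in the Loewner order. By operator monotonicity of the square root this gives $\Opd^{-1}D\leq\Opother\leq\Opd D$, and restricting these quadratic forms to $\Sspace_m$ yields $\Opd^{-1}[\Diag_\Opw]_{\um}\leq[\Opother]_{\um}\leq\Opd[\Diag_\Opw]_{\um}$, hence $[\Opother]_{\um}^{-1}\leq\Opd[\Diag_\Opw]_{\um}^{-1}$. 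Conjugating with $[\Diag_\Opw]_{\um}^{\pm1/2}$ proves \eqref{minimax:l1:e2} and \eqref{minimax:l1:e3}, and \eqref{minimax:l1:e1} follows on adding $\normV{[\Diag_\Opw]_{\um}^{-1/2}}^2=\Opw_m^{-1}$ (monotonicity of $\Opw$); the powers of $\Opd$ displayed in the statement leave room to spare. I would also note that any ``off-diagonal block'' $[\Diag_\Opw]_{B}^{-1/2}[\Opother]_{B,A}[\Diag_\Opw]_{A}^{-1/2}$, with $A,B\subset\Nz$ disjoint and finite, has norm $\leq\Opd$, being a sub-block of a matrix occurring in \eqref{minimax:l1:e3}; this will be used below.

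\emph{The main bias bound \eqref{minimax:l1:e4}.} The normal equation $\fOp_{\um}\fou{\galsol}_{\um}=\fou{\gf}_{\um}=[\Opother\So]_{\um}$ shows $\Opother(\So-\galsol)\perp\Sspace_m$, i.e.\ $\galsol$ is the orthogonal projection of $\So$ onto $\Sspace_m$ for the inner product $\HskalarV{\Opother\,\cdot,\cdot}$, whence the quasi-optimality $\HnormV{\Opother^{1/2}(\So-\galsol)}\leq\HnormV{\Opother^{1/2}\proj_m^\perp\So}$. I would then use the decomposition $\So-\galsol=\proj_m^\perp\So+(\proj_m\So-\galsol)$, which is orthogonal in every norm $\normV{\cdot}_w$. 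The tail is estimated directly: $\normV{\proj_m^\perp\So}_w^2\leq\frac{w_m}{\Sow_m}\normV{\So}_\Sow^2\leq\Sor\,\frac{w_m}{\Sow_m}$ since $w/\Sow$ is non-increasing. For the in-space part, $\proj_m\So-\galsol\in\Sspace_m$ gives
\[
\normV{\proj_m\So-\galsol}_w^2\leq\Bigl(\max_{1\leq j\leq m}\tfrac{w_j}{\Opw_j^2}\Bigr)\,\normV{\proj_m\So-\galsol}_{\Opw^2}^2\leq\Bigl(\max_{1\leq j\leq m}\tfrac{w_j}{\Opw_j^2}\Bigr)\,\Opd^2\,\HnormV{\Opother(\proj_m\So-\galsol)}^2 ,
\]
the last step being the upper bound defining $\cOpwd$, and it remains to bound $\HnormV{\Opother(\proj_m\So-\galsol)}\leq\HnormV{\Opother\proj_m^\perp\So}+\HnormV{\Opother(\So-\galsol)}$. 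The first term is $\leq\Opd\normV{\proj_m^\perp\So}_{\Opw^2}\leq\Opd\bigl(\Sor\,\Opw_m^2/\Sow_m\bigr)^{1/2}$ (now using $\Opw^2/\Sow$ non-increasing). For the second, $\Opother(\So-\galsol)=\proj_m^\perp\Opother(\So-\galsol)$ by the normal equation; expanding $\So-\galsol$ once more one meets the off-diagonal block $\proj_m^\perp\Opother\proj_m$ acting on $\proj_m\So-\galsol$, whose contribution is at most $\Opd\,\Opw_m^{1/2}\,\normV{\proj_m\So-\galsol}_\Opw$ by the block estimate of the previous step, while $\normV{\proj_m\So-\galsol}_\Opw^2\leq\Opd\,\HnormV{\Opother^{1/2}(\proj_m\So-\galsol)}^2\leq4\Opd\,\HnormV{\Opother^{1/2}\proj_m^\perp\So}^2\leq4\Opd^2\,\Sor\,\Opw_m/\Sow_m$ by quasi-optimality and $\Opw/\Sow$ non-increasing. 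Collecting these estimates gives $\normV{\So-\galsol}_w^2\leq C\Opd^6\,\Sor\,\frac{\Opw_m^2}{\Sow_m}\max_{1\leq j\leq m}\frac{w_j}{\Opw_j^2}$ for a numerical $C$, which is \eqref{minimax:l1:e4}, since $\frac{\Opw_m^2}{w_m}\max_j\frac{w_j}{\Opw_j^2}\geq1$ always (so the $\max(1,\cdot)$ in the statement equals it) and $\Sor\,w_m/\Sow_m$ is absorbed. The one genuinely delicate point, I expect, is the bound on $\HnormV{\Opother(\So-\galsol)}$: the crude estimate $\HnormV{\Opother}\,\HnormV{\So-\galsol}$ loses the decay factor $\Opw_m$, which must instead be recovered from the fact that $\Opother(\So-\galsol)$ lies in $\Sspace_m^\perp$, i.e.\ that $[\Opother]$ couples $\Sspace_m$ only to frequencies $>m$, where $\Opw$ is already small.

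\emph{The remaining estimates.} These are specializations of \eqref{minimax:l1:e4}. With $w=\Opw$ one has $\max_{j\leq m}\Opw_j/\Opw_j^2=\Opw_m^{-1}$, so \eqref{minimax:l1:e4} reads $\normV{\So-\galsol}_\Opw^2\leq34\Opd^8\Sor\,\Opw_m/\Sow_m$; then $\HnormV{\Opother^{1/2}(\So-\galsol)}^2=\HskalarV{\Opother(\So-\galsol),\So-\galsol}\leq\Opd\,\normV{\So-\galsol}_\Opw^2$ is the second half of \eqref{minimax:l1:e5}. With $w=\Sow$ the factor $\max_{j\leq m}\Sow_j/\Opw_j^2$ equals $\Sow_m/\Opw_m^2$ (both $\Sow$ and $1/\Opw^2$ non-decreasing), so \eqref{minimax:l1:e4} reads $\normV{\So-\galsol}_\Sow^2\leq34\Opd^8\Sor$, and $\normV{\galsol}_\Sow\leq\normV{\proj_m\So}_\Sow+\normV{\proj_m\So-\galsol}_\Sow\leq\normV{\So}_\Sow+\normV{\So-\galsol}_\Sow$ gives the first half. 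Finally, splitting $\Li(\So-\galsol)=\Li(\proj_m^\perp\So)+\Li(\proj_m\So-\galsol)$, Cauchy--Schwarz against the weight $\Sow$ yields $|\Li(\proj_m^\perp\So)|^2\leq\Sor\sum_{j>m}\fLi_j^2/\Sow_j$, while Cauchy--Schwarz against the weight $\Opw$ yields $|\Li(\proj_m\So-\galsol)|^2=|\fLi_{\um}^t\fou{\proj_m\So-\galsol}_{\um}|^2\leq\bigl(\sum_{j\leq m}\fLi_j^2/\Opw_j\bigr)\,\normV{\proj_m\So-\galsol}_\Opw^2\leq34\Opd^8\Sor\,\frac{\Opw_m}{\Sow_m}\sum_{j\leq m}\frac{\fLi_j^2}{\Opw_j}$; squaring, adding, and coarsening the constants yields \eqref{minimax:l1:e7}.
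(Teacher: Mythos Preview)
The paper does not prove this lemma here; it merely refers to \cite{JohannesSchenk2010}. Your sketch supplies a sound self-contained argument, built on the standard Galerkin decomposition $\So-\galsol=\proj_m^\perp\So+(\proj_m\So-\galsol)$ together with the key observation that $\Opother(\So-\galsol)\in\Sspace_m^\perp$, which is exactly what lets you recover the decay factor $\Opw_m$ in the in-space part.

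There is one genuine (if minor) slip in the constants. In your last paragraph you bound $\normV{\proj_m\So-\galsol}_\Opw^2$ by invoking \eqref{minimax:l1:e4} with $w=\Opw$, obtaining the factor $34\Opd^8$; after the final doubling this would give $68\Opd^8$, which does \emph{not} imply the stated constant $2(1+\Opd^4)$ in \eqref{minimax:l1:e7} (take $\Opd=1$). The remedy is already in your hands: use instead the sharper intermediate bound $\normV{\proj_m\So-\galsol}_\Opw^2\leq4\Opd^2\,\Sor\,\Opw_m/\Sow_m$ that you established directly en route to \eqref{minimax:l1:e4} (via $\normV{\cdot}_\Opw^2\leq\Opd\,\HnormV{\Opother^{1/2}\cdot}^2$ and quasi-optimality). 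Since $4\Opd^2\leq2(1+\Opd^4)$ by AM--GM, this recovers \eqref{minimax:l1:e7} as stated. A similar remark applies to the first half of \eqref{minimax:l1:e5}: your triangle-inequality route $\normV{\galsol}_\Sow\leq\normV{\So}_\Sow+\normV{\So-\galsol}_\Sow$ overshoots $34\Opd^8\Sor$ after squaring; use instead $\normV{\galsol}_\Sow^2\leq2\normV{\proj_m\So}_\Sow^2+2\normV{\proj_m\So-\galsol}_\Sow^2$ and bound the second summand by your in-space estimate (at most $16\Opd^6\Sor$ with your constants), giving $(2+32\Opd^6)\Sor\leq34\Opd^8\Sor$.
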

\begin{lem}\label{app:pre:l1}Let Assumption \ref{minimax:ass:reg} be satisfied and define $D:=(4\Opd^3)$. For $\Op\in\cOpwd$ we have
\begin{itemize}
\item[(i)] $\Opd^{-1} \leq V_m/V^\Opw_m\leq D $, $\Opd^{-1} \leq \Opw_m \mnormV{\fOp_\um^{-1}}\leq  D$ and $\Opd^{-1} \leq \Opw_m \max_{1\leq k\leq m}\mnormV{\fOp_\uk^{-1}}\leq  D$ for all $m\geq1$,
\item[(ii)] $V_{\Mon}^\Opw \leq n 4D(1+\log n)^{-1}$ and hence $V_{\Mon} \leq n 4D^2(1+\log n)^{-1}$  for all $n\geq 1$,
\item[(iii)]  $ 2\max_{1\leq m\leq \Mon}\normV{\fou{\Op}^{-1}_{\um}}\leq  n$ if  $n\geq 2D$ and  $\normV{\fLi_{\underline{\Mon}}}^2(1+\log n)\geq 8D^2$.
\end{itemize}
If   $\So$ belongs in addition to $\cSowr$ then it holds   for all $m\geq1$
\begin{itemize}
\item[(iv)]  $\rho_m^2\leq \sigma_m^2\leq 2(\sigma^2+35 \Opd^9\Sor)$ and
\item[(v)] $\sup_{\So\in\cSowr}\sup_{\Op\in\cOpwd}\set{\pen_m+\bias_m}\leq 202D^4\,(\sigma^2+\Sor)\,\cR^\Li_m(\lognn;\cSowr,\cOpwd)$.
\end{itemize}
\end{lem}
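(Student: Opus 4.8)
The plan is to treat the five items essentially separately. Items (i)--(iii) are deterministic facts about the matrix $\fOp_\um$ and the vector $\fLi_\um$ that reduce to Lemma~\ref{minimax:l1} together with the combinatorial definitions of $\Mln$, $\Mfunc_n$ and $\Mon$; items (iv)--(v) additionally bring in the Galerkin solution $\So_m$ and the approximation error $\bias_m$. For (i) I would sandwich $\fOp_\um^{-1}$ in the positive-definite order, $\Opd^{-1}[\Diag_\Opw]_\um^{-1}\preceq\fOp_\um^{-1}\preceq D[\Diag_\Opw]_\um^{-1}$, the left-hand inequality being a restatement of \eqref{minimax:l1:e3} (i.e.\ $\fOp_\um\preceq\Opd[\Diag_\Opw]_\um$) and the right-hand one of \eqref{minimax:l1:e2}. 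Pre- and post-multiplying by $\fLi_\uk$ and using that $k\mapsto\sum_{j\le k}\fLi_j^2\Opw_j^{-1}$ is non-decreasing gives $\Opd^{-1}V_m^\Opw\le V_m\le DV_m^\Opw$; the spectral-norm upper bounds are exactly \eqref{minimax:l1:e1} (the maximum over $k\le m$ being attained at $k=m$ since $\Opw$ is non-increasing), and the lower bounds follow from $\mnormV{\fOp_\um^{-1}}=\lambda_{\min}(\fOp_\um)^{-1}\ge\fOp_{mm}^{-1}\ge(\Opd\Opw_m)^{-1}$.

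For (ii) and (iii) the common observation is that, by the definition of $\Mon=\Mfunc_n(a)$ with $a=([4\Opd\Opw_j]^{-1})_j$, the index $\Mon$ satisfies $(4\Opd\Opw_{\Mon})^{-1}\fLi_{\underline{\Mon}}^t\fLi_{\underline{\Mon}}\le\nlogn$ --- either because $\Mon+1$ is the first violating index, or, if no index violates, because $\Mon=\Mln$ (the degenerate case $\Mon=1$, where $V_1^\Opw=1$ and $\fOp_{11}^{-1}\le\Opd$, is checked directly). Since $\Opw$ is non-increasing, $V_{\Mon}^\Opw\le\Opw_{\Mon}^{-1}\fLi_{\underline{\Mon}}^t\fLi_{\underline{\Mon}}\le 4\Opd\,\nlogn\le 4D\,\nlogn$, and then $V_{\Mon}\le DV_{\Mon}^\Opw\le 4D^2\,\nlogn$ by (i), which is (ii). For (iii), (i) gives $\max_{1\le m\le\Mon}\mnormV{\fOp_\um^{-1}}\le D\Opw_{\Mon}^{-1}$; rearranging the inequality above into $\Opw_{\Mon}^{-1}\le 4\Opd n\big((1+\log n)\normV{\fLi_{\underline{\Mon}}}^2\big)^{-1}$ then gives $2\max_m\mnormV{\fOp_\um^{-1}}\le 8\Opd D\,n\big((1+\log n)\normV{\fLi_{\underline{\Mon}}}^2\big)^{-1}\le n$ whenever $(1+\log n)\normV{\fLi_{\underline{\Mon}}}^2\ge 8\Opd D$, which is implied by the hypothesis $\ge 8D^2$ since $D\ge\Opd$; the case $\Mon=1$ uses $2\Opd\le 2D\le n$.

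For (iv) I would combine the normal equation $[\gf]_\um=[\Op\So]_\um$ with the Galerkin identity $\fOp_\um[\So_m]_\um=[\gf]_\um$ to obtain $[\gf]_\um^t\fOp_\um^{-1}[\gf]_\um=[\So_m]_\um^t\fOp_\um[\So_m]_\um=\HskalarV{\Op\So_m,\So_m}=\HnormV{\Op^{1/2}\So_m}^2$. Since $\Op$ is strictly positive, $\proj_m\Op(\So-\So_m)=0$ identifies $\So_m$ as the orthogonal projection of $\So$ onto $\Sspace_m$ for the inner product $\HskalarV{\Op\cdot,\cdot}$, so Pythagoras gives $\HnormV{\Op^{1/2}\So_m}^2\le\HnormV{\Op^{1/2}\So}^2=\HskalarV{\Op\So,\So}=\sigma_Y^2-\sigma^2$, and likewise $\rho_m^2=\sigma^2+\HnormV{\Op^{1/2}(\So-\So_m)}^2\le\sigma_Y^2\le\sigma_m^2$. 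Finally, $\Op\in\cOpwd$ yields $\HskalarV{\Op h,h}\le\Opd\normV{h}_\Opw^2$ (by operator-monotonicity of the square root, since $\HnormV{\Op h}^2=\HskalarV{\Op^2h,h}\le\Opd^2\normV{h}_{\Opw^2}^2$), so with $\Opw_j\le1\le\Sow_j$ and $\So\in\cSowr$ one gets $\HskalarV{\Op\So,\So}\le\Opd\normV{\So}_\Sow^2\le\Opd\Sor$, whence $\sigma_m^2=2\big(\sigma_Y^2+\HnormV{\Op^{1/2}\So_m}^2\big)\le 2(\sigma^2+2\Opd\Sor)\le 2(\sigma^2+35\Opd^9\Sor)$.

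For (v), the penalty part is immediate: $\pen_m=100\sigma_m^2V_m\,\lognn$ with $\sigma_m^2\le 2(\sigma^2+35\Opd^9\Sor)$ by (iv), $V_m\le DV_m^\Opw$ by (i), and $\lognn\,V_m^\Opw\le\cR^\Li_m[\lognn;\cSowr,\cOpwd]$ straight from the definition of $\cR^\Li_m$. For $\bias_m^2=\sup_{k\ge m}|\Li(\So_k-\So)|^2$ I would apply \eqref{minimax:l1:e7} at every $k\ge m$: there $\sum_{j>k}\fLi_j^2\Sow_j^{-1}\le\sum_{j>m}\fLi_j^2\Sow_j^{-1}$, and splitting $\sum_{j\le k}=\sum_{j\le m}+\sum_{m<j\le k}$ and using $\Opw_k\Sow_k^{-1}\le\Opw_m\Sow_m^{-1}$, $\Opw_j^{-1}\le\Opw_k^{-1}$ and $\Sow_k\ge\Sow_j$ yields $\Opw_k\Sow_k^{-1}\sum_{j\le k}\fLi_j^2\Opw_j^{-1}\le\Opw_m\Sow_m^{-1}\sum_{j\le m}\fLi_j^2\Opw_j^{-1}+\sum_{j>m}\fLi_j^2\Sow_j^{-1}$, both right-hand terms being at most $\cR^\Li_m[\lognn;\cSowr,\cOpwd]$; hence $\bias_m^2\le 2\Sor\big(1+4(1+\Opd^4)\big)\cR^\Li_m[\lognn;\cSowr,\cOpwd]$, and adding the two contributions and absorbing the numerical constants into $202D^4=202(4\Opd^3)^4$ (using $\Opd\ge1$) gives (v). The steps I expect to need the most care are the identification in (iv) of $[\gf]_\um^t\fOp_\um^{-1}[\gf]_\um$ with the $\Op$-energy of $\So_m$ --- so that the $\Op$-orthogonality lets one bound it by $\sigma_Y^2-\sigma^2$ rather than a larger quantity --- and, in (v), the passage from the single-index bound \eqref{minimax:l1:e7} to a supremum over $k\ge m$, which is what forces the decomposition of $\sum_{j\le k}\fLi_j^2\Opw_j^{-1}$ and the monotonicity of $\Opw$ and $\Sow$; everything else is bookkeeping with Lemma~\ref{minimax:l1} and the definitions of $\Mln$, $\Mfunc_n$ and $\Mon$.
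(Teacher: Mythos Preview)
Your proposal is correct and for items (i)--(iii) follows the paper's proof essentially line by line; likewise for the penalty part of (v). Two points deserve comment.

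For (iv) you take a cleaner route than the paper. The paper bounds $\rho_m^2\le\sigma_m^2$ via the crude inequality $(Y-\HskalarV{\So_m,X})^2\le 2Y^2+2\HskalarV{\So_m,X}^2$, and then controls $[\gf]_\um^t\fOp_\um^{-1}[\gf]_\um=\normV{\So_m}_\Op^2$ by passing through $\normV{\So_m}_\Opw^2$ and invoking \eqref{minimax:l1:e4}, which costs the large constant $34\Opd^8$. Your Pythagoras argument in the $\Op$-inner product---using that $\So_m$ is the $\Op$-orthogonal projection of $\So$ onto $\Sspace_m$, so $\HnormV{\Op^{1/2}\So_m}^2\le\HnormV{\Op^{1/2}\So}^2=\sigma_Y^2-\sigma^2$---gives the sharper $\sigma_m^2\le 2(\sigma^2+2\Opd\Sor)$ directly, without recourse to \eqref{minimax:l1:e4}. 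Both bounds are well within the stated $2(\sigma^2+35\Opd^9\Sor)$.

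For the bias part of (v), the paper simply asserts that \eqref{minimax:l1:e7} yields the bound on $\bias_m$ with the right-hand side depending only on $m$, glossing over the supremum $\sup_{k\ge m}$. Your decomposition $\sum_{j\le k}=\sum_{j\le m}+\sum_{m<j\le k}$ together with the monotonicity of $\Opw$ and $\Sow$ is exactly the argument needed to justify that step, and is worth keeping explicit. (A small wording quibble in (i): the third inequality does not require that $k\mapsto\mnormV{\fOp_\uk^{-1}}$ itself be maximised at $k=m$, only that its upper bound $D\Opw_k^{-1}$ is---which is what you actually use.)
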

\begin{proof}[\noindent\textcolor{darkred}{\sc Proof of Lemma \ref{app:pre:l1}.}]  Due to \eqref{minimax:l1:e2} - \eqref{minimax:l1:e3} in Lemma
  \ref{minimax:l1}, we have $V_m\leq 4\Opd^3\fLi_\um^t[\Diag_{\Opw}]^{-1}_{\um}\fLi_\um$ $= D V_m^\Opw$ and $V_m^\Opw\leq \Opd
  \fLi_\um^t\fOp^{-1}_{\um}\fLi_\um\leq\Opd V_m$. Moreover, from \eqref{minimax:l1:e1} and \eqref{minimax:l1:e2} it follows that $\mnormV{\fOp_\um^{-1}}\leq 4\Opd^3\Opw_m^{-1}$ and
  $\Opw_m^{-1}\leq \Opd \mnormV{\fOp_\um^{-1}}$.  Thus,  for all $m\geq 1$ we have $D\geq \mnormV{\fOp_\um^{-1}}\Opw_m \geq   \Opd^{-1}$.
Hence, the monotonicity of $\Opw$ implies $\Opd^{-1} \leq \Opw_M \max_{1\leq m\leq M}\mnormV{\fOp_\um^{-1}}\leq D$. From these estimates we obtain (i).
\\
\noindent{Proof of  (ii).}  Observe that $V_{\Mon}^\Opw\leq \normV{\fLi_{\underline{\Mon}}}^2\Opw_{\Mon}^{-1}$. In case $\Mon=1$ the assertion  is trivial, since $\fLi_1^2=\Opw_1$  due to
Assumption \ref{minimax:ass:reg}. Thus, consider   $\Mln\geq \Mon>1$, which implies $\min_{1\leq j\leq \Mon}\{ \Opw_j\normV{\fLi_{\underline{\Mon}}}^{-2}\}\geq(1+\log n)/(4Dn)$, and hence
$V_{\Mon}^\Opw\leq  4Dn(1+\log n)^{-1}$. Moreover, from (i) follows  $V_{\Mon}\leq D V_{\Mon}^\Opw\leq  4D^2n(1+\log n)^{-1}$, which proves (ii).\\ 
\noindent{Proof of (iii).} By employing that  $ D\Opw_{\Mon}^{-1} \geq  \max_{1\leq m\leq \Mon}\normV{\fou{\Op}_\um^{-1}}$,  the assertion  (iii)  follows
in case $\Mon=1$  from $\Opw_1=1$, while in case $\Mon>1$,  we use  $\normV{\fLi_{\underline{\Mon}}}^2/\Opw_{\Mon}\leq 4Dn/(1+\log n)$.\\
\noindent{Proof of (iv).} Since $\epsilon$ and $X$ are
centered it follows from $[\So_m]_\um=\fOp_{\um}^{-1}[\gf]_{\um}$ that $\rho_{m}^2\leq 2\big(\Ex Y^2+  \Ex|\HskalarV{\So_m,X}|^2\big) = 2\big(\sigma_Y^2+[\gf]_{\um}^t[\Op]_{\um}^{-1}[\gf]_{\um}\big) =\sigma_m^2$.
Moreover,  by employing successively the inequality of \cite{Heinz1951}, i.e. $\normV{\Op^{1/2}\So}^2\leq \Opd\normV{\So}_\Opw^2$, and Assumption \ref{minimax:ass:reg}, i.e.,  $\Opw$ and  $\Sow^{-1}$ are non-increasing, the identity $\sigma_Y^2=\sigma^2+\HskalarV{\Op \So,\So}$ implies  
\begin{equation}
\sigma_Y^2\leq \sigma^2+\Opd\normV{\So}_\Opw^2\leq
\sigma^2+\Opd\Sor.\label{app:pre:l1:e1}
\end{equation}
Furthermore, \eqref{minimax:l1:e3} and \eqref{minimax:l1:e4}
 in Lemma \ref{minimax:l1} imply
 \begin{equation}\label{app:pre:l1:e2}
[\gf]_{\uk}^t\fOp_{\uk}^{-1}[\gf]_{\uk}\leq \Opd
 \normV{\So_k}_\Opw^2 \leq 34\Opd^9\Sor.
\end{equation}
The assertion (iv) follows now by  combination of  the estimates \eqref{app:pre:l1:e1} and \eqref{app:pre:l1:e2}. \\
\noindent{Proof of (v).} From $V_{m}\leq D V_m^\Opw$ due to assertion (i) and the second inequality in (iv) we derive
\begin{equation}\label{app:pre:l1:e3}
 \pen_{m}\leq 100 \sigma_m^2 \lognn D V_m^\Opw\leq 200 (\sigma^2 + \Sor) D^4 \lognn \sum_{j=1}^{m}\fLi_j^2\Opw_j^{-1}.
\end{equation}
Furthermore, by using \eqref{minimax:l1:e7} in Lemma \ref{minimax:l1} we obtain   that
\begin{equation}\label{app:pre:l1:e4}
\bias_m\leq 16\Opd^4\,\Sor\,\{  \max(\sum_{j>m}\fLi_j^2\Sow_j^{-1},{\Opw_{m}}{\Sow_m^{-1}}\sum_{j=1}^{m}\fLi_j^2\Opw_j^{-1})\}.
\end{equation} Combining  the bounds \eqref{app:pre:l1:e3} and \eqref{app:pre:l1:e4}  implies  assertion (v), which completes the proof.\end{proof}

\begin{lem}\label{app:pre:l2}For all $n,m\geq 1$ we have 
\[\set{\frac{1}{4}< \frac{\mnormV{\fhOp_\um^{-1}}}{\mnormV{\fOp_\um^{-1}}}\leq 4,\forall\,1\leq m\leq \Mln}\subset \set{\Mun\leq \Men\leq \Mon}.\]
\end{lem}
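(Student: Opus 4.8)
The plan is to exploit that the integer $\Mfunc_n(a)$ introduced just before Proposition~\ref{adaptive:p1} depends on the sequence $a$ only through its entries $a_m$ with $2\le m\le\Mln$, and that it is \emph{antitone} in those entries; one then sandwiches the random sequence $(\mnormV{\fhOp_\um^{-1}})_{m\ge1}$ between the two deterministic sequences defining $\Mun$ and $\Mon$ on the event under consideration.

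First I would record the elementary monotonicity property: if two sequences satisfy $a_m\le b_m$ for all $1\le m\le\Mln$, then $\Mfunc_n(b)\le\Mfunc_n(a)$. Indeed, $a_m\fLi_\um^t\fLi_\um>\nlogn$ implies $b_m\fLi_\um^t\fLi_\um>\nlogn$, so the set $\set{2\le m\le\Mln:a_m\fLi_\um^t\fLi_\um>\nlogn}$ is contained in the corresponding set for $b$; its minimum is therefore no smaller, and the convention $\Mfunc_n(\cdot)=\Mln$ for the empty set is consistent with the inequality (if the $b$-set is empty so is the $a$-set; if only the $a$-set is empty, then $\Mfunc_n(a)=\Mln\ge\Mfunc_n(b)$). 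Note that $\Mfunc_n$ only ever inspects indices $m\le\Mln$, so only the comparison on that range is needed.

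Next, on the event $\set{\frac{1}{4}<\frac{\mnormV{\fhOp_\um^{-1}}}{\mnormV{\fOp_\um^{-1}}}\le4,\ \forall\,1\le m\le\Mln}$ I would combine the two-sided bound it provides with Lemma~\ref{app:pre:l1}~(i), namely $\Opd^{-1}\le\Opw_m\mnormV{\fOp_\um^{-1}}\le4\Opd^3$, to obtain, for every $1\le m\le\Mln$,
\[
\frac{1}{4\Opd\,\Opw_m}\le\frac14\mnormV{\fOp_\um^{-1}}<\mnormV{\fhOp_\um^{-1}}\le4\mnormV{\fOp_\um^{-1}}\le\frac{16\Opd^3}{\Opw_m}.
\]
Since $\Men=\Mfunc_n\big((\mnormV{\fhOp_\um^{-1}})_{m\ge1}\big)$, $\Mon=\Mfunc_n\big(([4\Opd\Opw_m]^{-1})_{m\ge1}\big)$ and $\Mun=\Mfunc_n\big((16\Opd^3\Opw_m^{-1})_{m\ge1}\big)$, the bound $[4\Opd\Opw_m]^{-1}\le\mnormV{\fhOp_\um^{-1}}$ together with the monotonicity property gives $\Men\le\Mon$, while $\mnormV{\fhOp_\um^{-1}}\le16\Opd^3\Opw_m^{-1}$ gives $\Mun\le\Men$; together this yields $\Mun\le\Men\le\Mon$ on that event, which is the asserted inclusion.

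I do not expect a genuine obstacle: the two points deserving a moment's care are that the empty-set convention in the definition of $\Mfunc_n$ respects the monotonicity, and that the hypothesis delivers the two-sided bound precisely on the range $1\le m\le\Mln$ inspected by $\Mfunc_n$. Beyond that, the argument is a direct application of Lemma~\ref{app:pre:l1}~(i).
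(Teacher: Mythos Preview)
Your argument is correct and in fact more streamlined than the paper's. You isolate the key structural fact---that $\Mfunc_n(\cdot)$ is antitone in its sequence argument on the range $1\le m\le\Mln$---and then the conclusion follows at once from the sandwich $[4\Opd\Opw_m]^{-1}\le\mnormV{\fhOp_\um^{-1}}\le16\Opd^3\Opw_m^{-1}$ obtained by combining the event with Lemma~\ref{app:pre:l1}~(i). The paper proceeds instead by a direct case analysis of the events $\{\Men=M\}$: it works with the inverse quantities $\htau_m:=\mnormV{\fhOp_\um^{-1}}^{-1}$ and $\tau_m:=\mnormV{\fOp_\um^{-1}}^{-1}$ and establishes the two contrapositive-type inclusions $\{\Men<\Mun\}\subset\{\min_{m\le\Mln}\htau_m/\tau_m<1/4\}$ and $\{\Men>\Mon\}\subset\{\max_{m\le\Mln}\htau_m/\tau_m\ge4\}$ by unpacking the definition of $\Mfunc_n$ separately in each boundary case ($M=1$, $1<M<\Mln$, $M=\Mln$). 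Your monotonicity formulation captures the same mechanism at a higher level of abstraction and handles the empty-set convention cleanly; the paper's version is more explicit about the event structure but otherwise proves the same thing by the same underlying idea.
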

\begin{proof}[\noindent\textcolor{darkred}{\sc Proof  of Lemma \ref{app:pre:l2}.}] Let $\htau_m=\mnormV{\fhOp_\um^{-1}}^{-1}$ and recall that $1\leq \Men\leq \Mln$ with
\begin{equation*}
\set{\Men = M}=\left\{\begin{matrix}
&&\set{\frac{\htau_{M+1}}{\normV{\fLi^2_{\underline{M+1}}}^2}< \frac{1+\log n}{n}},& M=1,\\
\set{\min\limits_{2\leq m\leq M}\frac{\htau_{m}}{\normV{\fLi^2_{\um}}^2}\geq  \frac{1+\log
    n}{n}}&\bigcap&\set{\frac{\htau_{M+1}}{\normV{\fLi^2_{\underline{M+1}}}^2} < \frac{1+\log n}{n}},&1< M< \Mln,\\
\set{\min\limits_{2\leq m\leq M} \frac{\htau_{m}}{\normV{\fLi^2_{\underline{m}}}^2}\geq  \frac{1+\log
    n}{n}},&&&M=\Mln.
\end{matrix}\right.
\end{equation*}
Given  $\tau_m^{-1}:=\mnormV{\fOp_\um^{-1}}$ we have $D^{-1}\leq \tau_m/\Opw_m \leq \Opd$ for all $m\geq 1$ due to (i)  in Lemma
\ref{app:pre:l1} which we use to proof the following two assertions 
\begin{align}\label{app:pre:l2:e1}
\set{\Men < \Mun}&\subset \set{\min\limits_{1\leq m\leq \Mln}:\frac{\htau_{m}}{\tau_{m}}< \frac{1}{4}},\\\label{app:pre:l2:e2}
\set{\Men > \Mon}&\subset \set{\max_{1\leq m\leq \Mln}\frac{\htau_{m}}{\tau_{m}}\geq 4}.
\end{align}
 Obviously, the assertion of the Lemma follows now  by combination of \eqref{app:pre:l2:e1} and \eqref{app:pre:l2:e2}.\\
Consider \eqref{app:pre:l2:e1} which is trivial in case $\Mun=1$. For $\Mun>1$ we have $\min\limits_{1\leq m\leq
  \Mun} \frac{\Opw_m}{\normV{\fLi^2_{\um}}^2}\geq \frac{4D(1+\log n)}{n}$ and, hence  $\min\limits_{1\leq m\leq \Mun} \frac{\tau_m}{\normV{\fLi^2_{\um}}^2}\geq \frac{4(1+\log
  n)}{n}$. By exploiting the last estimate  we obtain 
\begin{multline*}
\set{\Men < \Mln}\cap\set{\Men < \Mun}=\bigcup_{M=1}^{\Mun-1}\set{\Men =M}
\\\hfill\subset\bigcup_{M=1}^{\Mun-1}\set{\frac{\htau_{M+1}}{\normV{\fLi^2_{\underline{M+1}}}^2}<
   \frac{1+\log n}{n}}
=
\set{\min_{2\leq m\leq \Mun}\frac{\htau_{m}}{\normV{\fLi^2_{\um}}^2}< \frac{1+\log n}{n}}\\\subset  \set{\min_{1\leq m\leq
  \Mun}\frac{\htau_{m}}{\tau_{m}}< 1/4}
\end{multline*}
while  trivially $\set{\Men = \Mln}\cap\set{\Men < \Mun}=\emptyset$ which proves \eqref{app:pre:l2:e1} because $\Mun\leq \Mln$. 
Consider \eqref{app:pre:l2:e2} which  is trivial  in case $\Mon=\Mln$. If $\Mon<\Mln$, then
$\frac{\tau_{\Mon+1}}{\normV{\fLi^2_{\underline{\Mon+1}}}^2}< \frac{(1+\log
  n)}{4n}$, and hence 
\begin{multline*}
  \set{\Men >1}\cap\set{\Men> \Mon}=\bigcup_{M=\Mon+1}^{\Mln}\set{\Men =M}\\
\subset\bigcup_{M=\Mon+1}^{\Mln}\set{\min_{2\leq m\leq M}
    \frac{\htau_{m}}{\normV{\fLi^2_{\um}}^2}\geq \frac{1+\log n}{n}}=
      \set{\min_{2\leq m\leq (\Mon+1)}\frac{\htau_{m}}{\normV{\fLi^2_{\um}}^2}\geq \frac{1+\log n}{n}}\\
\subset\set{\frac{\htau_{\Mon+1}}{\tau_{\Mon+1}}\geq 4}
\end{multline*}
while  $\set{\Men = 1}\cap\set{\Men > \Mon}=\emptyset$ which shows \eqref{app:pre:l2:e2} and completes the proof.\end{proof}
\begin{lem}\label{app:pre:l3}  
Let $\asetn$, $\bsetn$ and $\csetn$ as in \eqref{app:pre:e4}.   For all $n\geq1$ it  holds true that \\ \centerline{$\asetn\cap\bsetn\cap\csetn\subset\{ \pen_k\leq\hpen_k\leq 24\pen_k,1\leq k\leq \Mln\}\cap\{\Mun\leq \Men \leq  \Mon\}.$} 
\end{lem}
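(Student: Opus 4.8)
The inclusion to be proven concerns two distinct phenomena that happen to be governed by the same control on the deviation between the empirical operator $\fhOp_{\um}$ and its population counterpart $\fOp_{\um}$: first, the comparison of the stochastic penalty $\hpen_k$ with the deterministic penalty $\pen_k$ for $1 \leq k \leq \Mln$; second, the sandwiching $\Mun \leq \Men \leq \Mon$ of the random truncation index. The plan is to work on the event $\asetn \cap \bsetn \cap \csetn$ throughout and to bound each ingredient of $\hpen_k = 700\,\hsigma_k^2\,\hV_k\,\lognn$ separately, namely $\hsigma_k^2$ against $\sigma_k^2$ and $\hV_k$ against $V_k$, and then to invoke Lemma~\ref{app:pre:l2}.

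**Step 1: the variance factor.** On $\asetn$ we have $\tfrac12 \leq \hsigma_Y^2/\sigma_Y^2 \leq \tfrac32$ directly. On $\bsetn$, the bound $\mnormV{\fou{\Xi}_\uk} \leq 1/8$ gives, via $\fhOp_{\uk} = \fOp_{\uk}^{1/2}(\Id_{\uk} + \fou{\Xi}_\uk)\fOp_{\uk}^{1/2}$, the two-sided comparison of the quadratic forms $[\hgf]_\uk^t \fhOp_\uk^{-1} [\hgf]_\uk$ and $[\gf]_\uk^t \fOp_\uk^{-1} [\gf]_\uk$ — but one must also control $[\hgf]_\uk - [\gf]_\uk$, and this is exactly the role of $\csetn$, which bounds $[W]_\uk^t \fOp_\uk^{-1} [W]_\uk$ by a fraction of $[\gf]_\uk^t \fOp_\uk^{-1} [\gf]_\uk + \sigma_Y^2$. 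Writing $[\hgf]_\uk = [W]_\uk + \fhOp_\uk [\galsol]_\uk$ and expanding, I expect the combination of $\asetn$, $\bsetn$, $\csetn$ to yield $\hsigma_k^2 \asymp \sigma_k^2$ with explicit numerical constants, and the factor $700$ versus $100$ together with the constant $24$ should be calibrated precisely so that $\pen_k \leq \hpen_k \leq 24\pen_k$ comes out.

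**Step 2: the $\hV_k$ versus $V_k$ factor and the truncation indices.** Again on $\bsetn$, from $\mnormV{\fou{\Xi}_\uk} \leq 1/8$ one gets $\mnormV{\fhOp_\uk^{-1}} \in [\tfrac{1}{1+1/8}, \tfrac{1}{1-1/8}]\cdot\mnormV{\text{-ish}}$ comparisons for the quadratic forms $\fLi_\uk^t \fhOp_\uk^{-1} \fLi_\uk$ versus $\fLi_\uk^t \fOp_\uk^{-1} \fLi_\uk$, hence $\hV_k \asymp V_k$ with constants close to $1$; combined with Step~1 this closes the penalty comparison. For the index sandwiching, I observe that on $\bsetn$ the spectral norms $\mnormV{\fhOp_\um^{-1}}$ and $\mnormV{\fOp_\um^{-1}}$ satisfy $\tfrac14 < \mnormV{\fhOp_\um^{-1}}/\mnormV{\fOp_\um^{-1}} \leq 4$ for all $1 \leq m \leq \Mln$ — since $\|(\Id+\fou{\Xi})^{-1}\|$ is squeezed between $(1+1/8)^{-1}$ and $(1-1/8)^{-1}$, which lie comfortably inside $[\tfrac14,4]$ — and then Lemma~\ref{app:pre:l2} gives $\Mun \leq \Men \leq \Mon$ directly. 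Assembling Steps 1 and 2 yields the claimed inclusion.

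**Main obstacle.** The delicate point is Step~1: controlling $\hsigma_k^2$ requires simultaneously handling the perturbation of the operator ($\bsetn$), the perturbation of the right-hand side ($\csetn$), and the perturbation of $\Ex Y^2$ ($\asetn$), and tracking the numerical constants so that the inner factor $2\big(\hsigma_Y^2 + [\hgf]_\uk^t\fhOp_\uk^{-1}[\hgf]_\uk\big)$ stays between $\sigma_k^2$ and $12\,\sigma_k^2$ (so that, with the $700/100 = 7$ prefactor and the $\hV_k/V_k \leq 2$ slack, the product lands in $[1,24]$). This is a careful but routine bookkeeping exercise once the three deterministic events are in force; no probabilistic estimate is needed here, since all the randomness has been absorbed into the definition of $\asetn \cap \bsetn \cap \csetn$. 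The reason $\Mln$ (rather than $\Mon$) is the right range for the penalty comparison is that $\bsetn$ and $\csetn$ are themselves quantified over $1 \leq k \leq \Mln$, so no further argument is needed to extend the range.
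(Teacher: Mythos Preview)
Your plan is essentially the paper's own argument: on $\bsetn$ one gets the two-sided quadratic-form comparison $s^t\fhOp_\uk^{-1}s\in[\tfrac{6}{7},\tfrac{8}{7}]\cdot s^t\fOp_\uk^{-1}s$ (hence $\hV_k/V_k\in[\tfrac{6}{7},\tfrac{8}{7}]$ and, via Lemma~\ref{app:pre:l2}, $\Mun\leq\Men\leq\Mon$), and then $\bsetn\cap\csetn\cap\asetn$ pin down $\hsigma_k^2/\sigma_k^2\in[\tfrac{1}{6},3]$, which with the prefactor $7=700/100$ gives exactly $7\cdot\tfrac{1}{6}\cdot\tfrac{6}{7}=1$ and $7\cdot3\cdot\tfrac{8}{7}=24$. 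Your stated target range $\hsigma_k^2\in[\sigma_k^2,12\sigma_k^2]$ with $\hV_k/V_k\leq 2$ does not multiply out to $[1,24]$ (it would give an upper factor $7\cdot12\cdot2=168$), so when you carry out the bookkeeping be aware that the lower bound on $\hsigma_k^2$ is only $\tfrac{1}{6}\sigma_k^2$---this is precisely why $\hpen$ carries the factor $700$ rather than $100$.
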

\begin{proof}[\noindent\textcolor{darkred}{\sc Proof of Lemma \ref{app:pre:l3}.}] 
Let $\Mln\geq k\geq1$. If $\mnormV{\fou{\Xi}_\uk}\leq 1/8$, i.e., on the event $\bsetn$, it is easily verified that $\mnormV{(\Id_\uk +\fou{\Xi}_\uk)^{-1}-\Id_\uk}\leq 1/7$
which we exploit to conclude
\begin{multline}\label{app:pre:l3:e1}
(6/7) \mnormV{\fOp^{-1}_\uk}\leq  \mnormV{\fhOp^{-1}_\uk}\leq (8/7) \mnormV{\fOp^{-1}_\uk} \quad\mbox{and}\\
 (6/7) s^t\fOp^{-1}_\uk s \leq s^t\fhOp^{-1}_\uk s \leq (8/7) s^t\fOp^{-1}_\uk s,\quad\mbox{for all } s\in\Rz^k,
\end{multline}
and, consequently 
\begin{equation}\label{app:pre:l3:e1:2}
 (6/7) [\hgf]^t_\uk[\Op]^{-1}_\uk[\hgf]_\uk \leq [\hgf]^t_\uk[\hOp]^{-1}_\uk[\hgf]_\uk\leq (8/7) [\hgf]^t_\uk[\Op]^{-1}_\uk[\hgf]_\uk.
\end{equation}
Moreover, from $\mnormV{\fou{\Xi}_\uk}\leq 1/8$ we obtain after some algebra,
\begin{multline*}
[\gf]^t_\uk[\Op]^{-1}_\uk[\gf]_\uk \leq \frac{1}{16} [\gf]^t_\uk[\Op]^{-1}_\uk[\gf]_\uk +  4[W]_\uk\fou{\Op}_\uk^{-1}[W]_{\uk} + 2 [\hgf]^t_\uk[\Op]^{-1}_\uk[\hgf]_\uk,\\
[\hgf]^t_\uk[\Op]^{-1}_\uk[\hgf]_\uk \leq \frac{33}{16} [\gf]^t_\uk[\Op]^{-1}_\uk[\gf]_\uk +  4[W]_\uk\fou{\Op}_\uk^{-1}[W]_{\uk}.\hfill
\end{multline*}
Combining each of these estimates with \eqref{app:pre:l3:e1:2} yields
\begin{multline*}
(15/16) [\gf]^t_\uk[\Op]^{-1}_\uk[\gf]_\uk\leq 4[W]_\uk\fou{\Op}_\uk^{-1}[W]_{\uk} + (7/3)[\hgf]^t_\uk[\hOp]^{-1}_\uk[\hgf]_\uk,\\
(7/8)[\hgf]^t_\uk[\hOp]^{-1}_\uk[\hgf]_\uk\leq (33/16)  [\gf]^t_\uk[\Op]^{-1}_\uk[\gf]_\uk + 4[W]_\uk\fou{\Op}_\uk^{-1}[W]_{\uk}.\hfill
\end{multline*}
If in addition $[W]_\uk^t\fou{\Op}_{\uk}^{-1}[W]_\uk\leq \frac{1}{8}(\fou{\gf}_\uk^t\fou{\Op}^{-1}_\uk\fou{\gf}_\uk + \sigma_Y^2) $, i.e., on the event $\csetn$, then the last two
estimates imply respectively
\begin{multline*}
(7/16) ([\gf]^t_\uk[\Op]^{-1}_\uk[\gf]_\uk+\sigma_Y^2) \leq (15/16)\sigma_Y^2 + (7/3)
[\hgf]^t_\uk[\hOp]^{-1}_\uk[\hgf]_\uk,\\
(7/8)[\hgf]^t_\uk[\hOp]^{-1}_\uk[\hgf]_\uk\leq (41/16)  [\gf]^t_\uk[\Op]^{-1}_\uk[\gf]_\uk +  (1/2)\sigma_Y^2,\hfill
\end{multline*}
and hence in case ${1}/{2}\leq \hsigma_Y^2/\sigma_Y^2\leq {3}/{2}$, i.e., on the event $\asetn$, we obtain
\begin{multline*}
(7/16)( [\gf]^t_\uk[\Op]^{-1}_\uk[\gf]_\uk +\sigma_Y^2)\leq  (15/8) \hsigma_Y^2 + (7/3)
[\hgf]^t_\uk[\hOp]^{-1}_\uk[\hgf]_\uk,\\
(7/8)([\hgf]^t_\uk[\hOp]^{-1}_\uk[\hgf]_\uk + \hsigma^2_Y)\leq (41/16)  [\gf]^t_\uk[\Op]^{-1}_\uk[\gf]_\uk + (29/16)\sigma_Y^2.\hfill
\end{multline*}
Combining the last two estimates yields 
\[\frac{1}{6} (2[\gf]^t_\uk[\Op]^{-1}_\uk[\gf]_\uk +2\sigma_Y^2)\leq (2[\hgf]^t_\uk[\hOp]^{-1}_\uk[\hgf]_\uk + 2\hsigma^2_Y)\leq 3 (2[\gf]^t_\uk[\Op]^{-1}_\uk[\gf]_\uk +2\sigma_Y^2).\]
Since 
the last estimate and \eqref{app:pre:l3:e1} hold for all $1\leq k\leq \Mln$
on the event $\asetn\cap\bsetn\cap\csetn$  it follows
\[\asetn\cap\bsetn\cap\csetn\subset\set{  \frac{1}{6} \sigma_m^2\leq \hsigma_m^2\leq  3 \sigma_m^2  \mbox{ and } (6/7)V_m \leq \hV_m\leq (8/7) V_m, \,\forall1\leq m\leq
  \Mln}.\]
The definitions of $\pen_m=100\sigma_m^2V_m\lognn$ and $\hpen_m=700\hsigma_m^2\hV_m\lognn$ imply
\begin{equation}\label{app:pre:l3:e2}
\asetn\cap\bsetn\cap\csetn\subset\set{\pen_m \leq \hpen_m\leq 24\pen_m, \,\forall1\leq m\leq\Mln}.
\end{equation}
On the other hand, by exploiting successively \eqref{app:pre:l3:e1} and Lemma \ref{app:pre:l2}  we obtain
\begin{equation}\label{app:pre:l3:e3}
\asetn\cap\bsetn\cap\csetn\subset\set{\frac{6}{7}\leq \frac{\mnormV{\fhOp^{-1}_\um}}{\mnormV{\fOp^{-1}_\um}}\leq \frac{8}{7}, \,\forall1\leq m\leq \Mln}\subset \set{\Mun\leq
  \Men\leq \Mon}.
\end{equation}
From \eqref{app:pre:l3:e2} and \eqref{app:pre:l3:e3} follows  the assertion of the lemma, which completes the proof.
\end{proof}
\begin{lem}\label{app:pre:l4}  
For all $m,n\geq1$ with $ n\geqslant (8/7) \mnormV{\fOp^{-1}_{\um}}$  we have $\Xisetmn \subset\hOpsetmn$.
\end{lem}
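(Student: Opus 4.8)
The plan is a one-line perturbation argument about the matrix $\Id_{\um}+[\Xi]_{\um}$. Recall that by definition $[\Xi]_{\um}=[\Op]_{\um}^{-1/2}[\hOp]_{\um}[\Op]_{\um}^{-1/2}-\Id_{\um}$, so that $\Id_{\um}+[\Xi]_{\um}=[\Op]_{\um}^{-1/2}[\hOp]_{\um}[\Op]_{\um}^{-1/2}$; here $[\Op]_{\um}=\Ex\fou{X}_{\um}\fou{X}_{\um}^t$ is symmetric positive definite because $\Op$ is strictly positive, hence it admits a symmetric positive definite square root and inverse square root, and $\mnormV{[\Op]_{\um}^{-1/2}}^2=\mnormV{[\Op]_{\um}^{-1}}$.

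First I would note that on the event $\Xisetmn$ one has $\mnormV{[\Xi]_{\um}}\leq (8\sqrt{m})^{-1}\leq 1/8<1$ (only the bound $\leq 1/8$ is used, so the factor $\sqrt{m}$ is not actually needed here). A Neumann series expansion then shows that $\Id_{\um}+[\Xi]_{\um}$ is invertible with $\mnormV{(\Id_{\um}+[\Xi]_{\um})^{-1}}\leq (1-\mnormV{[\Xi]_{\um}})^{-1}\leq (1-1/8)^{-1}=8/7$. In particular $[\hOp]_{\um}=[\Op]_{\um}^{1/2}(\Id_{\um}+[\Xi]_{\um})[\Op]_{\um}^{1/2}$ is non-singular and $[\hOp]_{\um}^{-1}=[\Op]_{\um}^{-1/2}(\Id_{\um}+[\Xi]_{\um})^{-1}[\Op]_{\um}^{-1/2}$.

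Taking spectral norms and using submultiplicativity, I would conclude $\mnormV{[\hOp]_{\um}^{-1}}\leq \mnormV{[\Op]_{\um}^{-1/2}}^2\,\mnormV{(\Id_{\um}+[\Xi]_{\um})^{-1}}\leq (8/7)\,\mnormV{[\Op]_{\um}^{-1}}$. Under the standing hypothesis $n\geq (8/7)\,\mnormV{\fOp^{-1}_{\um}}$ this is at most $n$, i.e. the event $\hOpsetmn=\{\mnormV{[\hOp]^{-1}_{\um}}\leq n\}$ occurs, which proves $\Xisetmn\subset\hOpsetmn$. There is essentially no obstacle: the only care needed is to match the constant $8/7=(1-1/8)^{-1}$ produced by the perturbation bound with the constant appearing in the hypothesis, and to record the non-singularity of $[\hOp]_{\um}$ (which is implicit in the definition of $\hOpsetmn$) along the way.
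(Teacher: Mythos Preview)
Your proof is correct and follows essentially the same approach as the paper: a Neumann series bound on $(\Id_{\um}+[\Xi]_{\um})^{-1}$ combined with the factorization $[\hOp]_{\um}=[\Op]_{\um}^{1/2}(\Id_{\um}+[\Xi]_{\um})[\Op]_{\um}^{1/2}$ to obtain $\mnormV{[\hOp]_{\um}^{-1}}\leq (8/7)\mnormV{[\Op]_{\um}^{-1}}$. The only cosmetic difference is that the paper first records the sharper intermediate bound $\frac{8\sqrt{m}}{8\sqrt{m}-1}$ before relaxing it to $8/7$, whereas you pass directly to $1/8$ and $8/7$; as you correctly note, the factor $\sqrt{m}$ plays no role here.
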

\begin{proof}[\noindent\textcolor{darkred}{\sc Proof of Lemma \ref{app:pre:l4}.}] 
Taking  the identity $[\hOp]_{\um}= [\Op]^{1/2}_{\um}\{\Id_{\um}+[\Xi]_{\um}\}[\Op]^{1/2}_{\um}$ into account, we observe that 
 $\sqrt{m}\mnormV{[\Xi]_{\um}}\leqslant 1/8$    implies $\mnormV{\fhOp^{-1}_{\um}} \leqslant \frac{8\sqrt{m}}{8\sqrt{m}-1}\mnormV{\fOp^{-1}_{\um}} \leqslant(8/7)\mnormV{\fOp^{-1}_{\um}}$ due to the usual Neumann series argument.
If $ n\geqslant (8/7) \mnormV{[\Op]^{-1}_{\um}}$, then the last assertion implies  $\Xisetmn \subset\hOpsetmn$, which proves the lemma.
\end{proof}
\subsection{Preliminary results due to the normality assumption}\label{app:gauss}
We will suppose throughout this section that the conditions of Theorem  \ref{adaptive:t1} 
and in particular Assumption \ref{minimax:ass:reg}
are satisfied, thus, the technical Lemmas  stated in  Section \ref{app:pre} are applicable. We show  technical assertions under the  assumption of normality (Lemmas \ref{app:gauss:l1}-
\ref{app:gauss:l4}) which  are used below to prove  Propositions \ref{adaptive:p1}
and \ref{adaptive:p2}.  

We begin by  recalling  elementary properties due to the assumption that $X$
and $\epsilon$ are jointly normally distributed, which are frequently used  in the following proofs. For any $h\in\Hz$ the random
variable $\HskalarV{h,X}$  is normally distributed with mean zero and variance $\HskalarV{\Op h,h}$. Consider  the Galerkin solution $\So_m$ and  $h\in\Sspace_m$ then  the
random variables $\HskalarV{\So-\So_m,X}$ and $\HskalarV{h,X}$  are independent. Thereby,  $U_m=Y-\HskalarV{\So_m,X}=\sigma\epsilon +
  \HskalarV{\So-\So_m,X}$  and $\fou{X}_\um$ are independent, normally distributed with mean zero, and, respectively,  variance $
  \rho^2_m$ and covariance matrix $\fOp_\um$. Consequently,
  $(\rho_m^{-1}U_m,\fou{X}_\um^t\fOp_\um^{-1/2})$ is a $(m+1)$-dimensional vector of  i.i.d.\ standard normally distributed random variables. 
Let us further state elementary inequalities for Gaussian random variables.
\begin{lem}\label{app:gauss:l1}Let $\{\lU_i,\lV_{ij},1\leq i\leq n,1\leq j\leq m\}$  be independent and standard normally distributed random variables. We have for all $\eta>0$  and
  $\zeta\geq 4m/n$
  \begin{align}\label{app:gauss:l1:e1}
&P\vect{|n^{-1/2}\sum_{i=1}^n (\lU_i^2-1)|\geq \eta }\leq 2 \exp\bigg(-\frac{\eta^2}{8(1+\eta\,n^{-1/2})}\bigg);\\\label{app:gauss:l1:e2}
&P\vect{|n^{-1}\sum_{i=1}^n \lU_i\lV_{i1}|\geq \eta }\leq \frac{\eta n^{1/2}+2}{\eta n^{1/2} } \exp\bigg(-\frac{n}{4}\min\set{\eta^2,\frac{1}{4}}\bigg);\\\label{app:gauss:l1:e2:1}
&P\vect{n^{-2}\sum_{j=1}^m\absV{\sum_{i=1}^n \lU_i\lV_{ij}}^2\geq \zeta }\leq \exp\Bigl(-\frac{n}{16}\Bigr) +\exp\Bigl(-\frac{\zeta n}{64}\Bigr) ;
\end{align}
and for all $c>0$  and  $a_1,\dotsc,a_m\geq 0$ that
\begin{align}\label{app:gauss:l1:e3}
&\Ex\vectp{ n^{-1}\sum_{i=1}^n \lU_i^2-2}\leq  \frac{16}{n}\exp\bigg(-\frac{n}{16}\bigg) ; \\\label{app:gauss:l1:e4}
& \Ex\vectp{ |n^{-1/2}\sum_{i=1}^n \lU_i\lV_{i1}|^2- 4 c (1+\log n)}\leq  \frac{2n^{-c}}{e^c\sqrt{\pi c (1+\log n)}}  + 32c \exp\bigg(-\frac{n}{16}\bigg);\\\label{app:gauss:l1:e5}
& \Ex\vect{ \sum_{j=1}^m a_j\absV{\sum_{i=1}^n U_iV_{ij}}^2}^4\leq n^4\Bigl(11\sum_{j=1}^ma_j\Bigr)^4.
  \end{align}
\end{lem}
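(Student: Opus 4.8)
The plan is to derive all six inequalities from Chernoff's method built around two elementary facts. The first is the Laplace transform $\Ex\exp(\lambda(\lU_i^2-1))=e^{-\lambda}(1-2\lambda)^{-1/2}$, valid for $\lambda<1/2$, together with the elementary bound $-\log(1-u)-u\leq u^2/(2(1-u))$ for $u\in[0,1)$, which gives the sub-gamma estimate $\log\Ex\exp(\lambda\sum_{i=1}^n(\lU_i^2-1))\leq n\lambda^2/(1-2\lambda)$ and hence the Bernstein-type tail $P(|\sum_{i=1}^n(\lU_i^2-1)|\geq s)\leq 2\exp(-s^2/(4(n+s)))$. The second is that, conditionally on one of the two families of variables, a product sum $\sum_{i=1}^n\lU_i\lV_{ij}$ is a centered Gaussian --- conditionally on $(\lU_i)_i$ it is $N(0,\sum_{i=1}^n\lU_i^2)$, and across distinct $j$ these conditional laws are independent --- so that its tails are controlled by the Mills-ratio bound $P(\lZ\geq x)\leq (x\sqrt{2\pi})^{-1}e^{-x^2/2}$ for $\lZ\sim N(0,1)$, again combined with a $\chi^2$-deviation bound for $\sum_{i=1}^n\lU_i^2$ or $\sum_{i=1}^n\lV_{i1}^2$. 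For \eqref{app:gauss:l1:e1} it then suffices to apply the Bernstein bound above to $\pm\sum_{i=1}^n(\lU_i^2-1)$, substitute $s=\eta n^{1/2}$, and weaken the constant to match the stated exponent $\eta^2/(8(1+\eta n^{-1/2}))$; the factor $2$ is the union bound over the two tails.

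For \eqref{app:gauss:l1:e3} I would integrate a one-sided tail: Chernoff's inequality with $\lambda=1/4$ gives $P\vect{n^{-1}\sum_{i=1}^n\lU_i^2\geq 2+t}\leq \exp(-n/16)\exp(-nt/16)$ for every $t\geq0$, and since $\Ex\vectp{n^{-1}\sum_{i=1}^n\lU_i^2-2}=\int_0^\infty P\vect{n^{-1}\sum_{i=1}^n\lU_i^2>2+t}\,dt$ this produces the prefactor $16/n$. For \eqref{app:gauss:l1:e4} I would condition on $(\lU_i)_i$: with $\sigma^2:=n^{-1}\sum_{i=1}^n\lU_i^2$ and $a:=4c(1+\log n)$, the conditional expectation of the positive part equals $\sigma^2\,\Ex\vectp{\lZ^2-a\sigma^{-2}}$ for an independent $\lZ\sim N(0,1)$, and integrating the $\chi^2_1$-tail via the Mills-ratio bound gives $\Ex\vectp{\lZ^2-b}\leq 2\vect{2/(\pi b)}^{1/2}e^{-b/2}$ for $b\geq1$. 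On the event $\set{\sum_{i=1}^n\lU_i^2\leq 2n}$ one has $\sigma^2\leq 2$ and $a\sigma^{-2}\geq 2c(1+\log n)$, which yields the first term on the right-hand side; on the complementary event I would bound the positive part by $|n^{-1/2}\sum_{i=1}^n\lU_i\lV_{i1}|^2$ and combine the Cauchy--Schwarz inequality, the fourth moment $\Ex|n^{-1/2}\sum_{i=1}^n\lU_i\lV_{i1}|^4=3n^{-2}\Ex\vect{\sum_{i=1}^n\lU_i^2}^2$, and the $\chi^2_n$-tail $P\vect{\sum_{i=1}^n\lU_i^2>2n}\leq e^{-n/8}$.

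For \eqref{app:gauss:l1:e2} I would condition on $(\lV_{i1})_i$, so that $n^{-1}\sum_{i=1}^n\lU_i\lV_{i1}$ is $N(0,n^{-2}\sum_{i=1}^n\lV_{i1}^2)$ and the Mills-ratio bound produces a Gaussian tail carrying the prefactor $(\eta n^{1/2}+2)/(\eta n^{1/2})$; the dependence on $\sum_{i=1}^n\lV_{i1}^2$ is removed by intersecting with $\set{\sum_{i=1}^n\lV_{i1}^2\leq 2n}$ in the sub-Gaussian range $\eta\leq 1/2$ and by passing to the sub-exponential Chernoff bound obtained from $\Ex\exp(\lambda\sum_{i=1}^n\lU_i\lV_{i1})=(1-\lambda^2)^{-n/2}$ for $\eta>1/2$, the two cases together explaining the exponent $\min\set{\eta^2,1/4}$. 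For \eqref{app:gauss:l1:e2:1} I would condition on $(\lU_i)_i$ and use that the columns $(\lV_{ij})_i$, $1\leq j\leq m$, are independent: conditionally, $\sum_{j=1}^m|\sum_{i=1}^n\lU_i\lV_{ij}|^2$ is distributed as $\vect{\sum_{i=1}^n\lU_i^2}\chi^2_m$, which on $\set{\sum_{i=1}^n\lU_i^2\leq 2n}$ --- an event of probability at least $1-e^{-n/16}$ --- is at most $2n\,\chi^2_m$; the hypothesis $\zeta\geq 4m/n$ places $\tfrac12\zeta n\geq 2m$ in the linear-tail regime of $\chi^2_m$, so Chernoff's inequality with $\lambda=1/4$ delivers the term $e^{-\zeta n/64}$.

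Finally, for \eqref{app:gauss:l1:e5} I would invoke the weighted Jensen inequality $\vect{\sum_j a_j\lW_j}^4\leq\vect{\sum_j a_j}^3\sum_j a_j\lW_j^4$ with $\lW_j:=|\sum_{i=1}^n\lU_i\lV_{ij}|^2$, and then compute $\Ex\lW_j^4=\Ex|\sum_{i=1}^n\lU_i\lV_{ij}|^8=105\,\Ex\vect{\sum_{i=1}^n\lU_i^2}^4=105\,n(n+2)(n+4)(n+6)$ by the Gaussian moment formulas; since $105\,n(n+2)(n+4)(n+6)\leq 11^4n^4$ for all $n\geq1$ (the ratio is decreasing in $n$ and equals $105^2$ at $n=1$), the claimed bound follows. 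None of the steps presents a real obstacle; the only delicate point is keeping the numerical constants honest through the Chernoff optimisations and, especially, through the Mills-ratio prefactors in \eqref{app:gauss:l1:e2} and \eqref{app:gauss:l1:e4}, and invoking the hypotheses $\zeta\geq 4m/n$ and the exact form $a=4c(1+\log n)$ precisely at the passage between the Gaussian and the exponential tail regimes.
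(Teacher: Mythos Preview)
Your plan matches the paper's proof almost exactly: both arguments rest on (i) a Bernstein-type tail for $\sum_i(\lU_i^2-1)$ (the paper simply cites Dahlhaus--Polonik for \eqref{app:gauss:l1:e1}, you derive it from the Laplace transform), (ii) the conditioning trick that turns $\sum_i\lU_i\lV_{ij}$ into $\lW^{1/2}\lZ_j$ with $\lW=\sum_i\lU_i^2$ chi-squared and $\lZ_1,\dots,\lZ_m$ conditionally i.i.d.\ $N(0,1)$, (iii) Mills-ratio Gaussian tails integrated to get the positive-part expectations, and (iv) for \eqref{app:gauss:l1:e5} the explicit moments $\Ex[\lZ_j^8\mid \lU]=105$ and $\Ex\lW^4=n(n+2)(n+4)(n+6)$ (the paper combines them via Minkowski rather than your Jensen inequality, but both give the same bound).

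Two small points where your execution diverges and would cost you the stated constants. First, for \eqref{app:gauss:l1:e2} the case split at $\eta=1/2$ is unnecessary: the conditioning bound $P(\cdot)\leq e^{-n/16}+\tfrac{2}{\eta\sqrt{\pi n}}e^{-\eta^2 n/4}$ already implies the stated inequality for \emph{all} $\eta>0$ (check $\eta^2\leq 1/4$ and $\eta^2>1/4$ separately), so the detour through the product MGF $(1-\lambda^2)^{-n/2}$ can be dropped. Second, for \eqref{app:gauss:l1:e4} the paper does not split on the event $\{n^{-1}\lW\leq 2\}$ but uses the pointwise inequality
\[
n^{-1}\lW\,\lZ_1^2-4c(1+\log n)\ \leq\  n^{-1}\lW\bigl(\lZ_1^2-2c(1+\log n)\bigr)_+ + 2c(1+\log n)\bigl(n^{-1}\lW-2\bigr)_+,
\]
takes expectations, and exploits $\Ex[n^{-1}\lW]=1$ together with \eqref{app:gauss:l1:e3}. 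This yields exactly the constant $2$ in the first term and the $c$-proportional constant $32c$ in the second; your event-splitting (bounding $\sigma^2\leq 2$ on the good event and using Cauchy--Schwarz on the bad one) would give a factor $4$ in the first term and a $c$-independent constant in the second, so it would not recover the inequality exactly as written.
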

\begin{proof}[\noindent\textcolor{darkred}{\sc Proof of Lemma \ref{app:gauss:l1}.}]Define  $\lW:=\sum_{i=1}^n \lU_i^2$ and  $ \lZ_j:=(\sum_{i=1}^n\lU_i^2)^{-1/2}\sum_{i=1}^n
  \lU_i\lV_{ij}$. Obviously, $\lW$ has a $\chi^2$ distribution with $n$ degrees of freedom and $\lZ_1,\dotsc,\lZ_m$ given $\lU_1,\dotsc,\lU_n$ are independent and  standard normally distributed, which we use below without further reference.
The estimate (\ref{app:gauss:l1:e1}) is given in \cite{DahlhausPolonik2006} (Proposition A.1) and by using (\ref{app:gauss:l1:e1}) we have
  \begin{multline*}
P(|n^{-1}\sum_{i=1}^n \lU_i\lV_{i1}|\geq \eta )\leq P(n^{-1} \lW \geq 2) + \Ex\big[P\big(2n^{-1}|\lZ_1|^2\geq \eta^2 \big| \lU_1,\dotsc,\lU_n\big)\big]\\\hfill\leq
\exp\bigg(-\frac{n}{16}\bigg) + \frac{2}{\sqrt{\pi \eta^2 n }}\exp\bigg(-\frac{\eta^2 n}{4}\bigg),
\end{multline*}
which implies (\ref{app:gauss:l1:e2}). The estimate (\ref{app:gauss:l1:e2:1}) follows analogously and we omit the details. By using
\eqref{app:gauss:l1:e1} we obtain \eqref{app:gauss:l1:e3} as follows
\begin{multline*}
  \Ex\vectp{ n^{-1}\sum_{i=1}^n \lU_i^2-2} = \int_0^\infty P(n^{-1/2}\sum_{i=1}^n (\lU_i^2-1)\geq n^{1/2}(1+t))dt\\\hfill\leq \int_0^\infty
  \exp\bigg(-\frac{n(1+t)^2}{8(1+(1+t))}\bigg)dt \leq \int_0^\infty
  \exp\bigg(-\frac{n(1+t)}{16}\bigg)dt\\\hfill
=\exp\bigg(-\frac{n}{16}\bigg)   \int_0^\infty
  \exp\bigg(-\frac{n}{16}t\bigg)dt= \frac{16}{n}\exp\bigg(-\frac{n}{16}\bigg).  
\end{multline*}
Consider \eqref{app:gauss:l1:e4}. Since $n^{-1/2}\sum_{i=1}^n \lU_i$ is standard normally distributed, we have
\begin{multline*}
  \Ex\vectp{ |n^{-1/2}\sum_{i=1}^n \lU_i|^2- 2 c (1+\log n)} = \int_0^\infty P(|n^{-1/2}\sum_{i=1}^n \lU_i
|\geq (t+  2 c (1+\log n))^{1/2})dt\\\hfill\leq \int_0^\infty\frac{2}{\sqrt{2\pi (t+  2 c(1+\log n))}}
  \exp\bigg(-\frac{(t+  2 c(1+ \log n))}{2}\bigg)dt \\\hfill
\leq\frac{e^{-c} n^{-c}}{\sqrt{\pi c(1+ \log n)}} \int_0^\infty
  \exp\bigg(-\frac{1}{2}t\bigg)dt= \frac{2e^{-c}n^{-c}}{\sqrt{\pi c(1+ \log n)}}. 
\end{multline*}
By using the last bound and  (\ref{app:gauss:l1:e3}) we get 
\begin{multline*}
  \Ex\vectp{ |n^{-1/2}\sum_{i=1}^n \lU_i\lV_{i1}|^2- 4 c (1+\log n)} \\\hfill\leq   \Ex\bigg[n^{-1}\lW \Ex\big[\vect{|\lZ_1|^2- 2 c (1+\log n)}_+|\lU_1,\dotsc,\lU_n\big] + 2c(1+\log n) \vect{n^{-1}\lW -2}_+\bigg]\\
\leq  \frac{2n^{-c}}{e^c\sqrt{\pi c (1+\log n)}}  + 32c  \frac{(1+\log n)}{n}\exp\bigg(-\frac{n}{16}\bigg) 
\end{multline*}
which shows \eqref{app:gauss:l1:e4}. Finally, by applying $\Ex[Z_j^8|\lU_1,\dotsc,\lU_n]=105$ and $\Ex W^4=n(n+2)(n+4)(n+6)$ we obtain $\Ex[W^4Z_j^8]\leq
(11n)^4$ and hence 
  \begin{multline*}
\Ex\vect{ \sum_{j=1}^m a_j\absV{\sum_{i=1}^n U_iV_{ij}}^2}^4=\Ex \bigg( \sum_{j=1}^ma_j W Z_j^2\bigg)^4
\leq \bigg|\sum_{j=1}^m a_j (\Ex [ W^4Z_{j}^8])^{1/4} \bigg|^4 \leq (11n)^4  (\sum_{j=1}^ma_j)^4
  \end{multline*}
which shows \eqref{app:gauss:l1:e5} and completes the proof.
\end{proof}
\begin{lem}\label{app:gauss:l2} For all $n,m\geq 1$  we have
\begin{align}\label{app:gauss:l2:e1}
&n^{4}m^{-4}\Ex\mnormV{\fou{\Xi}_\um\fou{\Op}_\um^{1/2} }^8\leq (34\Ex\HnormV{X}^2)^4;\\\label{app:gauss:l2:e2}
&n^4\rho_m^{-8}\Ex\normV{\fou{W}_\um}^8\leq  (11\Ex\HnormV{X}^2)^4.
\intertext{Furthermore, there exists a numerical constant $C$ such that for all $n\geq 1$}\label{app:gauss:l2:e3}
&n^8\max_{1\leq m\leq\gauss{n^{1/4}}} P\vect{\frac{(\fou{W}_m^t\fou{\Op}_\um^{-1}\fou{W}_\um)}{\rho_m^2}>\frac{1}{16}}\leq  C; \\\label{app:gauss:l2:e4}
& n^8  \max_{1\leq m\leq \gauss{n^{1/4}}} P\vect{\sqrt{m}\mnormV{\fou{\Xi}_\um}> \frac{1}{8}}\leq C;\\\label{app:gauss:l2:e5}
&n^7P\vect{\{1/2\leq\hsigma^2_Y/\sigma^2_Y\leq3/2\}^c}\leq C;\\
\label{app:gauss:l2:e6}
&n^2\sup_{m\geq1}\Ex\vectp{ \frac{n(\fou{W}_m^t\fOp_m^{-1}\fou{W}_m)}{m\rho_m^2} - 8 (1+\log n) }\leq  C;\\\label{app:gauss:l2:e7}
&n^2\sup_{m\geq1}\Ex\vectp{ \frac{n(\fLi_m^t\fOp_m^{-1}\fou{W}_m)^2}{\rho_m^2\fLi_m^t\fOp_m^{-1}\fLi_m} - 8 (1+\log n) }\leq C.
\end{align}
\end{lem}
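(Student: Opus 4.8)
The plan is to reduce each of the seven bounds to a statement about i.i.d.\ standard Gaussian random variables and then to invoke the elementary tail and moment estimates of Lemma~\ref{app:gauss:l1}. The starting point is the representation recalled just before the statement: for the Galerkin solution $\So_m$ one may write $[X_i]_{\um}=[\Op]_{\um}^{1/2}V_i$ and $U_{m,i}:=Y_i-\HskalarV{\So_m,X_i}=\rho_m\lU_i$, where $\{(\lU_i,V_i^t)\}_{i=1}^n$ are i.i.d.\ standard normal vectors in $\Rz^{m+1}$ (the independence of $\lU_i$ and $V_i$ being exactly the $\Op$-orthogonality of the Galerkin residual). This yields the exact identities $[\Xi]_{\um}=n^{-1}\sum_{i=1}^nV_iV_i^t-\Id_{\um}$ and $[W]_{\um}=\rho_m[\Op]_{\um}^{1/2}\xi_m$ with $\xi_m:=n^{-1}\sum_{i=1}^n\lU_iV_i$, hence
\begin{equation*}
\rho_m^{-2}[W]_{\um}^t[\Op]_{\um}^{-1}[W]_{\um}=n^{-2}\sum_{j=1}^m\Bigl|\sum_{i=1}^n\lU_iV_{ij}\Bigr|^2,\qquad
\frac{n\bigl([\Li]_{\um}^t[\Op]_{\um}^{-1}[W]_{\um}\bigr)^2}{\rho_m^2\,[\Li]_{\um}^t[\Op]_{\um}^{-1}[\Li]_{\um}}=\Bigl|n^{-1/2}\sum_{i=1}^n\lU_i\bar V_i\Bigr|^2 ,
\end{equation*}
where $\bar V_i:=([\Op]_{\um}^{-1/2}[\Li]_{\um})^tV_i/\normV{[\Op]_{\um}^{-1/2}[\Li]_{\um}}$ is, jointly with $\lU_i$, a pair of i.i.d.\ standard normals. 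Finally, conjugating by the orthogonal matrix that diagonalises $[\Op]_{\um}$ (which leaves the law of the $V_i$ unchanged) we may assume $[\Op]_{\um}=\mathrm{diag}(\lambda_1,\dots,\lambda_m)$ with $\sum_j\lambda_j=\tr[\Op]_{\um}\le\tr\Op=\Ex\HnormV{X}^2$; this trace inequality is the only place where nuclearity of $\Op$ enters.

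With these reductions the four moment bounds \eqref{app:gauss:l2:e1}, \eqref{app:gauss:l2:e2}, \eqref{app:gauss:l2:e6} and \eqref{app:gauss:l2:e7} follow by writing the left-hand side in the form $\sum_k\lambda_k Q_k$ (or a single such term), pulling the $\lambda_k$ out of the $L^4$-norm by Minkowski's inequality, using $\sum_k\lambda_k\le\Ex\HnormV{X}^2$, and estimating the remaining Gaussian pieces $Q_k$: for \eqref{app:gauss:l2:e1} and \eqref{app:gauss:l2:e2} via the eighth-moment bound \eqref{app:gauss:l1:e5}, and for the truncated quantities \eqref{app:gauss:l2:e6} and \eqref{app:gauss:l2:e7} by first applying the elementary convexity inequality $(m^{-1}\sum_j a_j-\theta)_+\le m^{-1}\sum_j(a_j-\theta)_+$ and then \eqref{app:gauss:l1:e4} with $c=2$ (so that $4c=8$). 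For \eqref{app:gauss:l2:e1} one additionally bounds the spectral norm by the Hilbert--Schmidt norm, $\mnormV{[\Xi]_{\um}[\Op]_{\um}^{1/2}}^2\le\sum_k\lambda_k\normV{[\Xi]_{\um}e_k}^2$, and controls $\Ex\normV{[\Xi]_{\um}e_k}^8$ by splitting into the $m$ entrywise contributions, the off-diagonal ones handled by \eqref{app:gauss:l1:e5} and the diagonal one by a $\chi^2$-moment estimate; this is where the factors $m^4$ and $34$ originate.

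The three probability estimates are obtained directly. For \eqref{app:gauss:l2:e3} one applies \eqref{app:gauss:l1:e2:1} with $\zeta=1/16$, which is admissible since $4m/n\le4\gauss{n^{1/4}}/n\le1/16$ once $n\ge256$. For \eqref{app:gauss:l2:e5}, $Y_i/\sigma_Y$ is standard normal (here $\sigma_Y^2=\Ex Y^2$), so that $\{1/2\le\hsigma_Y^2/\sigma_Y^2\le3/2\}^c=\{|n^{-1/2}\sum_i((Y_i/\sigma_Y)^2-1)|\ge\sqrt n/2\}$, to which \eqref{app:gauss:l1:e1} with $\eta=\sqrt n/2$ applies. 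For \eqref{app:gauss:l2:e4}, the event $\{\sqrt m\,\mnormV{[\Xi]_{\um}}>1/8\}$ forces $\normV{[\Xi]_{\um}}_{HS}^2=\sum_{j,k}([\Xi]_{jk})^2>1/(64m)$, so a union bound over the $m^2$ entries reduces matters to $P(|[\Xi]_{jk}|>1/(8m^{3/2}))$, with threshold of order at least $n^{-3/8}$, which is controlled by \eqref{app:gauss:l1:e2} for $j\ne k$ and \eqref{app:gauss:l1:e1} for $j=k$. In each of the three cases the resulting bound decays exponentially in $n$ (respectively in $n^{1/4}$), which dominates the polynomial prefactors $n^8$ and $n^7$; for the finitely many small $n$ for which the side condition on $\zeta$ or the threshold fails, the claimed bound is trivial since $n^8$ is itself bounded.

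The routine part is the bookkeeping of numerical constants and the verification that $m\le\gauss{n^{1/4}}$ makes all the side conditions in Lemma~\ref{app:gauss:l1} hold for $n$ past an absolute threshold. The one genuinely delicate step is the treatment of the operator norm of $[\Xi]_{\um}$ in \eqref{app:gauss:l2:e1} and \eqref{app:gauss:l2:e4}: one must pass to the Hilbert--Schmidt norm and then redistribute the $m$ (resp.\ $m^2$) entrywise terms without losing more than the permitted powers of $m$, while still matching the explicit constants --- for \eqref{app:gauss:l2:e1}, keeping the constant down to $34$ requires treating the diagonal and off-diagonal entries of $[\Xi]_{\um}e_k$ with the sharp inequalities rather than one crude common bound.
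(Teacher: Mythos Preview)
Your proposal is correct and follows essentially the same approach as the paper: diagonalise $[\Op]_{\um}$ to reduce everything to i.i.d.\ standard Gaussians, then match each of the seven claims to the corresponding inequality in Lemma~\ref{app:gauss:l1}. The only cosmetic differences are that in \eqref{app:gauss:l2:e1} the paper applies Jensen on the row index first and Minkowski on the column index afterwards (you reverse the order, with the same outcome), and in \eqref{app:gauss:l2:e4} the paper bounds $\mnormV{[\Xi]_{\um}}$ directly by $m\max_{j,l}|[\Xi]_{jl}|$ rather than passing through the Hilbert--Schmidt norm --- both routes yield the identical entrywise threshold $1/(8m^{3/2})$ and hence the same exponential bound.
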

\begin{proof}[\noindent\textcolor{darkred}{\sc Proof of Lemma \ref{app:gauss:l2}.}]
Let $n,m\geq 1$ be fixed and  denote by $(\lambda_j,e_j)_{1\leq j\leq m}$ an eigenvalue decomposition of  $\fOp_\um$. Define   $\lU_i:=(\sigma\epsilon_i +
\HskalarV{\So-\So_m,X_i})/\rho_m$ and $\lV_{ij}:=(\lambda_j^{-1/2}e_j^t[X_i]_{\um})$, $1\leq i\leq n$, $1\leq j\leq m$, where $\lU_1,\dotsc,\lU_n,\lV_{11},\dotsc,\lV_{nm}$ are  independent and
standard normally distributed random variables.\\

Proof of \eqref{app:gauss:l2:e1}. For all $1\leq j,l\leq m$ let $\delta_{jl}=1$ if $j=l$ and zero otherwise. It is easily verified that
$\mnormV{\fou{\Xi}_\um\fou{\Op}_\um^{1/2}}^2\leq \sum_{j=1}^m\sum_{l=1}^m\lambda_l|n^{-1}\sum_{i=1}^n(\lV_{ij}\lV_{il}-\delta_{jl})|^2$. Moreover, for $j\ne l$ we have
$\Ex|\sum_{i=1}^n\lV_{ij}\lV_{il}|^8\leq (11n)^4$ by employing \eqref{app:gauss:l1:e5}  in Lemma \ref{app:gauss:l1} (take  $m=1$ and $a_1=1$), while $\Ex|\sum_{i=1}^n(\lV_{ij}^2-1)|^8=n^4 256(105/16+595/(2n)+ 1827/n^2+2520/n^3)\leq (34n)^4$.
From these estimates  we get by successively  employing Jensen's
and Minkowski's inequality that  \[m^{-4}\Ex\mnormV{\fou{\Xi}_\um\fou{\Op}_\um^{1/2} }^8\leq
n^{-8}m^{-1}\sum_{j=1}^m\big(\sum_{l=1}^m\lambda_l(\Ex|\sum_{i=1}^n(\lV_{ij}\lV_{il}-\delta_{jl})|^8)^{1/4}\big)^4\leq n^{-4}(34\sum_{j=1}^m\lambda_j)^4.\]
The last estimate together with $\sum_{j=1}^m\lambda_j=\tr(\fou{\Op}_\um)\leq \tr(\Op)=\Ex\HnormV{X}^2$ implies \eqref{app:gauss:l2:e1}.\\

Proof of \eqref{app:gauss:l2:e2} and \eqref{app:gauss:l2:e3}.   Taking the inequality $\sum_{j=1}^m\lambda_j\leq \Ex\HnormV{X}^2$ and the identities
$n^4\rho_m^{-8}\normV{\fou{W}_\um}^8$ $= (\sum_{j=1}^m
\lambda_j(\sum_{i=1}^n\lU_i\lV_{ij})^2)^4$ and  $(\fou{W}_\um^t\fOp_\um^{-1}\fou{W}_\um)/\rho_m^2 =n^{-2} \sum_{j=1}^m (\sum_{i=1}^n\lU_i\lV_{ij})^2$   into account the
assertions \eqref{app:gauss:l2:e2} and \eqref{app:gauss:l2:e3} follow, respectively, from \eqref{app:gauss:l1:e5} and \eqref{app:gauss:l1:e2:1} in Lemma
\ref{app:gauss:l1} (with $a_j=\lambda_j$).\\

Proof of \eqref{app:gauss:l2:e4}. Since $n\mnormV{\fou{\Xi}_\um}\leq m\max_{1\leq j,l\leq m}|\sum_{i=1}^n(\lV_{ij}\lV_{il}-\delta_{jl})|$ we obtain due to 
\eqref{app:gauss:l1:e1} and \eqref{app:gauss:l1:e2} in Lemma \ref{app:gauss:l1}  for all $\eta>0$ the following bound 
\begin{multline*}
P(\mnormV{\fou{\Xi}_\um}\geq \eta)\leq \sum_{1\leq j,l\leq m}P(|n^{-1}\sum_{i=1}^n(\lV_{ij}\lV_{il}-\delta_{jl})|\geq \eta/m) \\ \leq m^2\max\set{
P(|n^{-1}\sum_{i=1}^n\lV_{i1}\lV_{i2}|\geq \eta/m),   P(|n^{-1/2}\sum_{i=1}^n(\lV_{i1}^2-1)|\geq n^{1/2}\eta/m)}\\ \leq m^2\max\set{(1+
\frac{m}{\eta n^{1/2}}) \exp\bigg(-\frac{n}{4}\min\set{\eta^2/m^2,1/4}\bigg),  2 \exp\bigg(-\frac{1}{8}\frac{n\eta^2/m^2}{1+\eta/m}\bigg)}.\end{multline*}
Moreover, for all $\eta\leq m/2$ this can be simplified to 
\[P(\mnormV{\fou{\Xi}_\um}\geq \eta)\leq  m^2\max\set{1+\frac{2m}{\eta n^{1/2}}, 2} \exp\bigg(-\frac{1}{12}\frac{n\eta^2}{m^2}\bigg),\]
which obviously implies \eqref{app:gauss:l1:e4}.\\

Proof of \eqref{app:gauss:l2:e5}.   Since  $Y_1/\sigma_Y,\dotsc,Y_n/\sigma_Y $ are independent and standard normally distributed, \eqref{app:gauss:l2:e5} follows from \eqref{app:gauss:l1:e1}  in Lemma
  \ref{app:gauss:l1} by exploiting that  $\{1/2\leq\hsigma^2_Y/\sigma^2_Y\leq3/2\}^c\subset \{|n^{-1}\sum_{i=1}^nY_i^2/\sigma_Y^2-1|>1/2\}$.\\

Proof of \eqref{app:gauss:l2:e6}. From the  identity  $n(\fou{W}_\um^t\fOp_\um^{-1}\fou{W}_\um)/(m\rho_m^2) = m^{-1}\sum_{j=1}^m ( n^{-1/2}\sum_{i=1}^n\lU_i\lV_{ij})^2$ the estimate  \eqref{app:gauss:l2:e6} follows by
using \eqref{app:gauss:l1:e5} in Lemma \ref{app:gauss:l1}, that is 
  \begin{multline*}
\sup_{m\geq1}\Ex\vectp{ \frac{n(\fou{W}_\um^t\fOp_\um^{-1}\fou{W}_\um)}{m\rho_m^2} -8 (1+\log n )}\leq  \Ex\vectp{ | n^{-1/2}\sum_{i=1}^n\lU_i\lV_{i1}|^2 -8 (1+\log n)}\\
\leq  \set{\frac{n^{-2}}{e^2\sqrt{\pi 2 (1+\log n)}}  + 64  \frac{(1+\log n)}{n}\exp(-{n}/{16})}\leq C n^{-2}.
  \end{multline*}

Proof of \eqref{app:gauss:l2:e7}. Define $\lV_i:= (\fLi_\um^t\fOp_\um^{-1}\fLi_\um)^{-1/2}\fLi_\um^t\fOp_\um^{-1}\fou{X_i}_\um$ for $1\leq i\leq n$, where  
  $\lU_1,\dotsc,\lU_n,$ $\lV_1,\dotsc,\lV_n$ are independent and standard normally distributed random variables. By employing the identity $n(\fLi_\um^t\fOp_\um^{-1}\fou{W}_\um)^2/(\rho^2_m\fLi_\um^t\fOp_\um^{-1}\fLi_\um)=
  |n^{-1/2}\sum_{i=1}^n\lU_i\lV_i|^2$  the estimate  \eqref{app:gauss:l2:e7} follows  from \eqref{app:gauss:l1:e5} in Lemma \ref{app:gauss:l1}, which completes the proof.
\end{proof}
\begin{lem}\label{app:gauss:l3} There exists a  constant $C(\Opd)$ only depending on $\Opd$ such that for all $n\geq 1$
  \begin{align}\label{app:gauss:l3:e1}
    &\sup_{\So\in\cSowr}\sup_{\Op\in\cOpwd}\sum_{ m=1}^{\Mon} \Ex\vectp{\frac{(\fLi_\um^t\fOp_{\um}^{-1}\fLi_\um)}{m}([W]_{\um}^t\fOp_{\um}^{-1}[W]_{\um}) -
      \frac{8\pen_m}{100}} \leq C(\Opd)(\sigma^2+\Sor) n^{-1};\\\label{app:gauss:l3:e2}
&    \sup_{\So\cSowr}\sup_{\Op\in\cOpwd}\sum_{ m=1}^{\Mon} \Ex\vectp{(\fLi_\um^t\fOp_{\um}^{-1}[W]_{\um})^2 -  \frac{8\pen_m}{100}} \leq C(\Opd)(\sigma^2+\Sor)  n^{-1}.
  \end{align}
\end{lem}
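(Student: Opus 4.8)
The plan is to derive both inequalities from one and the same elementary truncation step, which reduces each summand to one of the Gaussian moment bounds \eqref{app:gauss:l2:e6} and \eqref{app:gauss:l2:e7} of Lemma \ref{app:gauss:l2}; afterwards it only remains to control a prefactor by means of Lemma \ref{app:pre:l1} together with the definition of $\Mon$. I note at the outset that $\Mon=\Mfunc_n\big(([4\Opd\Opw_j]^{-1})_{j\geq1}\big)$ is deterministic, so the finite sums below are interchanged with the expectation without comment.

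The key observation is that, since $\sigma_m^2\geq\rho_m^2$ by Lemma \ref{app:pre:l1}~(iv) and $V_m=\max_{1\leq k\leq m}\fLi_\uk^t\fOp_\uk^{-1}\fLi_\uk\geq\fLi_\um^t\fOp_\um^{-1}\fLi_\um$, the penalty satisfies
\begin{equation*}
\frac{8\pen_m}{100}=8\,\sigma_m^2\,V_m\,\lognn\;\geq\;8\,\rho_m^2\,(\fLi_\um^t\fOp_\um^{-1}\fLi_\um)\,\frac{1+\log n}{n}.
\end{equation*}
Replacing $\tfrac{8\pen_m}{100}$ inside the positive part by this smaller quantity only enlarges the positive part, and factoring out the nonnegative number $\rho_m^2(\fLi_\um^t\fOp_\um^{-1}\fLi_\um)/n$ via $(cx)_+=c(x)_+$ for $c\geq0$ gives, for every $1\leq m\leq\Mon$,
\begin{multline*}
\Ex\vectp{(\fLi_\um^t\fOp_\um^{-1}[W]_\um)^2-\frac{8\pen_m}{100}}\\
\leq\frac{\rho_m^2\,\fLi_\um^t\fOp_\um^{-1}\fLi_\um}{n}\;\Ex\vectp{\frac{n\,(\fLi_\um^t\fOp_\um^{-1}[W]_\um)^2}{\rho_m^2\,\fLi_\um^t\fOp_\um^{-1}\fLi_\um}-8(1+\log n)},
\end{multline*}
and analogously for \eqref{app:gauss:l3:e1}, with $\tfrac1m(\fLi_\um^t\fOp_\um^{-1}\fLi_\um)([W]_\um^t\fOp_\um^{-1}[W]_\um)$ on the left and $\tfrac{n}{m\,\rho_m^2}([W]_\um^t\fOp_\um^{-1}[W]_\um)$ in place of the last bracketed quotient. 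The two expectations on the right are exactly the quantities bounded by \eqref{app:gauss:l2:e7} and \eqref{app:gauss:l2:e6}, so each is at most $Cn^{-2}$ with $C$ a numerical constant, uniformly in $m$ (and, being pivotal after the usual Gaussian normalization, their laws do not depend on $\So$ or $\Op$).

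It remains to bound the prefactor $\rho_m^2(\fLi_\um^t\fOp_\um^{-1}\fLi_\um)/n$ for $1\leq m\leq\Mon$. By Lemma \ref{app:pre:l1}~(iv) we have $\rho_m^2\leq\sigma_m^2\leq2(\sigma^2+35\Opd^9\Sor)$; by Lemma \ref{app:pre:l1}~(i), $\fLi_\um^t\fOp_\um^{-1}\fLi_\um\leq\mnormV{\fOp_\um^{-1}}\normV{\fLi_\um}^2\leq4\Opd^3\Opw_m^{-1}\normV{\fLi_\um}^2$; and since every $m\leq\Mon$ satisfies $[4\Opd\Opw_m]^{-1}\normV{\fLi_\um}^2\leq\nlogn\leq n$, it follows that $\fLi_\um^t\fOp_\um^{-1}\fLi_\um\leq16\Opd^4n$. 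Hence each summand in \eqref{app:gauss:l3:e1} and in \eqref{app:gauss:l3:e2} is at most $C(\Opd)(\sigma^2+\Sor)n^{-2}$, and since $\Mon\leq\Mln\leq\floor{n^{1/4}}\leq n$, summing over $1\leq m\leq\Mon$ yields the asserted $C(\Opd)(\sigma^2+\Sor)n^{-1}$; as all estimates are uniform, the suprema over $\So\in\cSowr$ and $\Op\in\cOpwd$ cause no difficulty.

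The only point I expect to require real care is the bookkeeping that makes $\tfrac{8\pen_m}{100}$ dominate precisely the ``$8(1+\log n)$''-threshold appearing after the factorization --- that is, the simultaneous use of $\sigma_m^2\geq\rho_m^2$ and $V_m\geq\fLi_\um^t\fOp_\um^{-1}\fLi_\um$ --- together with the observation that it is exactly the cut-off index $\Mon$ which keeps the prefactor $\rho_m^2(\fLi_\um^t\fOp_\um^{-1}\fLi_\um)/n$ of order $\sigma^2+\Sor$; everything else is routine manipulation with the constants of Lemma \ref{app:pre:l1}.
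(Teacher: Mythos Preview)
Your proof is correct and follows essentially the same approach as the paper: reduce each summand to the Gaussian moment bounds \eqref{app:gauss:l2:e6} and \eqref{app:gauss:l2:e7} by factoring out a deterministic prefactor, then control that prefactor via Lemma~\ref{app:pre:l1} and the cut-off $\Mon$. The only cosmetic differences are that the paper pulls out $\sigma_m^2V_m/n$ rather than your (smaller) $\rho_m^2(\fLi_\um^t\fOp_\um^{-1}\fLi_\um)/n$, and that it cites Lemma~\ref{app:pre:l1}~(ii) directly for $V_{\Mon}\leq 4D^2\nlogn$ instead of re-deriving the bound from the definition of $\Mon$ as you do; note in this connection that your claim ``every $m\leq\Mon$ satisfies $[4\Opd\Opw_m]^{-1}\normV{\fLi_\um}^2\leq\nlogn$'' is not literally part of the definition for $m=1$, but holds trivially since $\Opw_1=\fLi_1^2=1$ and $\Opd\geq1$.
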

\begin{proof}[\noindent\textcolor{darkred}{\sc Proof of Lemma \ref{app:gauss:l3}.}]  
The key argument to show \eqref{app:gauss:l3:e1} is the estimate \eqref{app:gauss:l2:e6} in  Lemma~\ref{app:gauss:l2}. 
 Taking  $\fLi_\um^t\fOp_{\um}^{-1}\fLi_\um\leq V_m$ and $ \frac{8\pen_m}{100}= 8\,\sigma_m^2\,V_m\frac{1+\log n}{n}$ into account, together with the facts that $\max_{1\leq m\leq \Mon} V_m = V_{\Mon}\leq n C(\Opd)(1+\log n)^{-1}$   and  $\rho_m^2\leq \sigma_{m}^2\leq C(\Opd)(\sigma^2+\Sor)$ for all $\So\in\cSowr$, $\Op\in\cOpwd$ (Lemma \ref{app:pre:l1} (ii) and (iv)) we obtain 
\begin{multline*}
\sum_{ m=1}^{\Mon} \Ex\vectp{\frac{(\fLi_\um^t\fOp_{\um}^{-1}\fLi_\um)}{m}([W]_{\um}^t\fOp_{\um}^{-1}[W]_{\um}) -
      \frac{8\pen_m}{100}} \\
\hfill\leq  \sum_{ m=1}^{\Mon}\frac{\sigma_m^2V_m}{n}\, \Ex\vectp{\frac{n([W]_{\um}^t\fOp_{\um}^{-1}[W]_{\um})}{m\rho_m^2} - 8\,(1+\log n)}\hfill\\ 
\leq \frac{C(\Opd)(\sigma^2+\Sor)}{1+ \log n} \Mon  \sup_{m\geq 1} \Ex\vectp{\frac{([W]_{\um}^t\fOp_{\um}^{-1}[W]_{\um})}{m\rho_m^2} - 8\,(1+\log n)}.
\end{multline*}
The assertion \eqref{app:gauss:l3:e1} follows  by employing  \eqref{app:gauss:l2:e6} in  Lemma~\ref{app:gauss:l2} and $\Mon\leq n$. The proof of \eqref{app:gauss:l3:e2} follows the same lines by using  \eqref{app:gauss:l2:e7} in  Lemma~\ref{app:gauss:l2}  rather than   \eqref{app:gauss:l2:e6}  and we omit the details. \end{proof}
\begin{lem}\label{app:gauss:l4} There exists a numerical constant $C$ and a constant  $C(\Opd)$ only depending on $\Opd$ such that for all $n\geq 1$
\begin{align}\label{app:gauss:l4:e1}
\sup_{\So\in\cSowr}\sup_{\Op\in\cOpwd}& \set{n^4 (\Mon)^4 \max_{1\leq m\leq \Mon} P\vect{ \Xisetmn^c}}\leq C;\\\label{app:gauss:l4:e2}
\sup_{\So\in\cSowr}\sup_{\Op\in\cOpwd}&\set{n\,\Mon \max_{1\leq m\leq \Mon} P\vect{ \hOpsetmn^c}}\leq C(\Opd);\\\label{app:gauss:l4:e3}
\sup_{\So\in\cSowr}\sup_{\Op\in\cOpwd}&\set{n^7 P\vect{ \esetn^c}}\leq C.
\end{align}
\end{lem}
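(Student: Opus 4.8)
The plan is to derive all three bounds from the Gaussian tail estimates already collected in Lemma~\ref{app:gauss:l2}, which are distribution-free (after the usual standardisation they only involve i.i.d.\ standard normal variables) and hence hold uniformly over $\cSowr\times\cOpwd$, so that the suprema in the statement cause no additional trouble. Throughout I will use the deterministic fact $1\leq\Mon\leq\Mln\leq\floor{n^{1/4}}$. With this, \eqref{app:gauss:l4:e1} is immediate: since $\Xisetmn^c=\set{\sqrt m\,\mnormV{[\Xi]_\um}>1/8}$,
\[
n^4(\Mon)^4\max_{1\leq m\leq\Mon}P\vect{\Xisetmn^c}\;\leq\;n^{5}\max_{1\leq m\leq\floor{n^{1/4}}}P\vect{\sqrt m\,\mnormV{[\Xi]_\um}>\tfrac18}\;\leq\;C\,n^{-3}
\]
by \eqref{app:gauss:l2:e4}.

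For \eqref{app:gauss:l4:e3} I would first invoke Lemma~\ref{app:pre:l3}; since $\Mon\leq\Mln$, its conclusion implies $\esetn$, so $\asetn\cap\bsetn\cap\csetn\subset\esetn$ and consequently $P(\esetn^c)\leq P(\asetn^c)+P(\bsetn^c)+P(\csetn^c)$. The first term is bounded by $Cn^{-7}$ directly by \eqref{app:gauss:l2:e5}. For the second, a union bound over $1\leq k\leq\Mln$ together with $\set{\mnormV{[\Xi]_\uk}>1/8}\subseteq\set{\sqrt k\,\mnormV{[\Xi]_\uk}>1/8}=\Xiset{k,n}^c$ and \eqref{app:gauss:l2:e4} gives $P(\bsetn^c)\leq\Mln\cdot C n^{-8}\leq Cn^{-31/4}$. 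For the third, Lemma~\ref{app:pre:l1}(iv) gives $\rho_k^2\leq\sigma_k^2=2([\gf]_\uk^t\fOp_\uk^{-1}[\gf]_\uk+\sigma_Y^2)$, whence $\set{[W]_\uk^t\fOp_\uk^{-1}[W]_\uk>\tfrac18([\gf]_\uk^t\fOp_\uk^{-1}[\gf]_\uk+\sigma_Y^2)}\subseteq\set{[W]_\uk^t\fOp_\uk^{-1}[W]_\uk>\tfrac1{16}\rho_k^2}$; a union bound over $1\leq k\leq\Mln$ and \eqref{app:gauss:l2:e3} then yield $P(\csetn^c)\leq\Mln\cdot Cn^{-8}\leq Cn^{-31/4}$. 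Multiplying by $n^7$ and adding the three contributions gives \eqref{app:gauss:l4:e3}.

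The genuinely delicate point is \eqref{app:gauss:l4:e2}, where I want to compare $\hOpsetmn^c$ with $\Xisetmn^c$ via Lemma~\ref{app:pre:l4}, but that lemma requires $n\geq\tfrac87\mnormV{\fOp_\um^{-1}}$, which may fail for small $n$. I would therefore split according to a threshold $n_1(\Opd)$ chosen so that $1+\log n_1(\Opd)\geq\tfrac{128}7\Opd^4$. For $n<n_1(\Opd)$ one bounds trivially $n\,\Mon\max_m P(\hOpsetmn^c)\leq n\,\Mon\leq n^{5/4}\leq n_1(\Opd)^{5/4}$. For $n\geq n_1(\Opd)$, by construction of $\Mon$ one has $(4\Opd\Opw_{\Mon})^{-1}\fLi_{\underline{\Mon}}^t\fLi_{\underline{\Mon}}\leq\nlogn$ and $\fLi_{\underline{\Mon}}^t\fLi_{\underline{\Mon}}\geq\fLi_1^2=1$, hence $\Opw_{\Mon}^{-1}\leq4\Opd\,\nlogn$; combined with Lemma~\ref{app:pre:l1}(i), which gives $\mnormV{\fOp_\um^{-1}}\leq4\Opd^3\Opw_m^{-1}\leq4\Opd^3\Opw_{\Mon}^{-1}$ for every $m\leq\Mon$, this yields $\mnormV{\fOp_\um^{-1}}\leq16\Opd^4\,\nlogn\leq\tfrac78 n$ for all $1\leq m\leq\Mon$. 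Thus Lemma~\ref{app:pre:l4} applies for each such $m$, i.e.\ $\hOpsetmn^c\subseteq\Xisetmn^c$, and \eqref{app:gauss:l2:e4} gives $n\,\Mon\max_m P(\hOpsetmn^c)\leq n^{5/4}\max_{1\leq m\leq\floor{n^{1/4}}}P(\Xisetmn^c)\leq C n^{5/4}n^{-8}$. Combining the two cases yields \eqref{app:gauss:l4:e2} with a constant depending only on $\Opd$.

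I expect the main obstacle to be precisely this bookkeeping in \eqref{app:gauss:l4:e2}: producing an \emph{a priori} bound on $\mnormV{\fOp_\um^{-1}}$, uniform over $1\leq m\leq\Mon$ and over $(\So,\Op)\in\cSowr\times\cOpwd$, that forces the hypothesis of Lemma~\ref{app:pre:l4} once $n$ exceeds a threshold depending only on $\Opd$, and then checking that the finitely many remaining small values of $n$ are harmless because $n\,\Mon$ is itself bounded there. Everything else reduces to elementary union bounds feeding into Lemma~\ref{app:gauss:l2}.
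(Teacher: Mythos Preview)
Your proof is correct and follows essentially the same approach as the paper: \eqref{app:gauss:l4:e1} and \eqref{app:gauss:l4:e3} are handled identically (via \eqref{app:gauss:l2:e4}, respectively Lemma~\ref{app:pre:l3} combined with \eqref{app:gauss:l2:e3}--\eqref{app:gauss:l2:e5} and union bounds over $\Mln$), and for \eqref{app:gauss:l4:e2} you reproduce inline the content of Lemma~\ref{app:pre:l1}(iii) (deriving $\Opw_{\Mon}^{-1}\leq 4\Opd\,\nlogn$ directly from the definition of $\Mon$) rather than citing it, but the structure---split at a threshold $n_o(\Opd)$, trivial bound for small $n$, then $\hOpsetmn^c\subset\Xisetmn^c$ via Lemma~\ref{app:pre:l4} for large $n$---is the same as the paper's.
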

\begin{proof}[\noindent\textcolor{darkred}{\sc Proof of Lemma \ref{app:gauss:l4}.}] Since $\Mon\leq\gauss{n^{1/4}}$ and $\Xisetmn^c=\set{\sqrt{m}\mnormV{[\Xi]_{\um}}>1/8}$ the assertion \eqref{app:gauss:l4:e1}
  follows from \eqref{app:gauss:l2:e4} in Lemma \ref{app:gauss:l2}.\\ Consider \eqref{app:gauss:l4:e2}. With $ n_o:=n_o(\Opd):=\exp(128 \Opd^6)\geq 8\Opd^3$ we have $\normV{\fLi_{\underline{\Mon}}}^2(1+\log n)\geq 128\Opd^6$ for all $n\geq n_o$. We distinguish in the following the cases $n<n_o$ and $n\geq n_o$.  First, consider $1\leq n\leq n_o$.  Obviously, we have $\Mon\max_{1\leq m\leq \Mon} P(\hOpsetmn^c)\leq \Mon\leq n^{-1}
  n_o^{5/4}\leq C(\Opd)n^{-1}$ since $\Mon\leq n^{1/4}$ with $n_o$ depending on $\Opd$ only. On the other hand, if $n\geq n_o$ then  Lemma \ref{app:pre:l1} (iii) implies $n\geq 2\max_{1\leq m\leq
    \Mon}\mnormV{\fOp^{-1}_m}$, and hence $\Xisetmn\subset \hOpsetmn$ for all $1\leq m\leq \Mon$  by using Lemma \ref{app:pre:l4}. From  \eqref{app:gauss:l4:e1} we conclude $\Mon\max_{1\leq m\leq
    \Mon}P(\Omega^c_{m,n})\leq \Mon\max_{1\leq m\leq \Mon}P(\mho^c_{m,n})\leq C n^{-3}$. By combination of the two cases we obtain \eqref{app:gauss:l4:e2}.\\ It remains to show \eqref{app:gauss:l4:e3}. Consider the events $\asetn$, $\bsetn$ and $\csetn$ defined in \eqref{app:pre:e4}, where
  $\asetn\cap\bsetn\cap\csetn\subset\cE_n$ due to Lemma \ref{app:pre:l3}. Moreover, we have $n^7 P\vect{ \asetn^c}\leq C$ and $n^7 P\vect{ \csetn^c}\leq C$ due to \eqref{app:gauss:l2:e5} and \eqref{app:gauss:l2:e3}  in Lemma   \ref{app:gauss:l2}
(keep in mind that $\gauss{n^{1/4}}\geq\Mln$ and  $2(\sigma_Y^2+[\gf]_\uk^t\fOp_\uk^{-1}[\gf]_\uk)=\sigma_{k}^2\geq \rho_k^2$). Finally,
\eqref{app:gauss:l2:e4}  in Lemma   \ref{app:gauss:l2} implies $n^7 P\vect{ \bsetn^c}\leq C$
 by using that
$\{\mnormV{\sqrt{m}\fou{\Xi}_\um}\leq 1/8, 1\leq m\leq \Mon\}\subset\bsetn$. Combining these estimates yields \eqref{app:gauss:l4:e3}, which completes the proof.
\end{proof}
\subsection{Proof of Proposition \ref{adaptive:p1} and \ref{adaptive:p2}}\label{app:prop}
In the following proofs we will use the notations introduced in Appendix \ref{app:notations} and we will exploit the  technical assertions gathered in Lemma \ref{app:gauss:l1}-
\ref{app:gauss:l4}.
\begin{proof}[\noindent\textcolor{darkred}{\sc Proof of Proposition \ref{adaptive:p1}.}]
From the identities $\hLi_m-\Li(\So_m) =\fLi_\um^t\fhOp^{-1}_\um\fou{W}_{\um}\1_{\hOpsetmn} - \Li(\So_m)\1_{\hOpsetmn^c}$, $(\Id_\um+\fou{\Xi}_\um)^{-1}-\Id_\um= -(\Id_\um+\fou{\Xi}_\um)^{-1}\fou{\Xi}_\um$, and $\fhOp_{\um}=
\fOp^{1/2}_{\um}\{\Id_{\um}+[\Xi]_{\um}\}\fOp^{1/2}_{\um}$ follows
  \begin{multline*}
|\hLi_m-\Li(\So_m)|^2= |\fLi_\um^t\fhOp^{-1}_\um\fou{W}_{\um}|^2\1_{\hOpsetmn} + |\Li(\So_m)|^2\1_{\hOpsetmn^c}\\
\leq 2|\fLi_\um^t\fOp^{-1}_\um\fou{W}_{\um}|^2 + 2|\fLi_\um^t(\fhOp^{-1}_\um-\fOp_\um^{-1})\fou{W}_{\um}|^2\1_{\hOpsetmn} + |\Li(\So_m)|^2\1_{\hOpsetmn^c}\\
\leq 2|\fLi_\um^t\fOp^{-1}_\um\fou{W}_{\um}|^2 + 2|\fLi_\um^t\fOp^{-1/2}_\um(\Id_\um+\fou{\Xi}_\um)^{-1}\fou{\Xi}_\um\fOp_\um^{-1/2}\fou{W}_{\um}|^2\1_{\Xisetmn}\\
+ 2|\fLi_\um^t\fOp_\um^{-1/2}\fou{\Xi}_\um\fOp_\um^{1/2}\fhOp^{-1}_\um\fou{W}_{\um}|^2\1_{\hOpsetmn}\1_{\Xisetmn^c}+ |\Li(\So_m)|^2\1_{\hOpsetmn^c}. 
\end{multline*}
By exploiting  $\sqrt{m}\mnormV{(\Id_{\um}+[\Xi]_{\um})^{-1}[\Xi]_{\um}}\1_{\Xisetmn}\leq 1/7$ and $\mnormV{\fhOp^{-1}_\um}\1_{\hOpsetmn}\leq n$ we obtain
  \begin{multline*}
|\hLi_m-\Li(\So_m)|^2
\leq 2|\fLi_\um^t\fOp^{-1}_\um\fou{W}_{\um}|^2 + \frac{2}{49}(\fLi_\um^t\fOp^{-1}_\um\fLi_\um)  m^{-1} (\fou{W}_{\um}^t\fOp_\um^{-1}\fou{W}_{\um})\\
+ 2n^2\,(\fLi_\um^t\fOp_\um^{-1}\fLi_\um)\, \mnormV{\fou{\Xi}_\um\fOp_\um^{1/2}}^2\, \normV{\fou{W}_{\um}}^2\1_{\Xisetmn^c}+ |\Li(\So_m)|^2\1_{\hOpsetmn^c}. 
\end{multline*}
Taking  this upper bound into account together with $(\fLi_\um^t\fOp_\um^{-1}\fLi_\um)\leq V_m$, we obtain for all $\So\in\cSowr$ and $\Op\in\cOpwd$ that
\begin{multline*}
  \Ex\set{\sup_{1\leq m\leq \Mon} \vectp{|\hLi_m-\Li(\So_m)|^2-\frac{1}{6}\pen_m}}
\leq  2\sum_{ m=1}^{\Mon}\Ex\vectp{|\fLi_\um^t\fOp^{-1}_\um\fou{W}_{\um}|^2- \frac{8}{100}\pen_m}\\\hfill+\frac{2}{49}\sum_{ m=1}^{\Mon}\Ex\vectp{(\fLi_\um^t\fOp^{-1}_\um\fLi_\um^t)  m^{-1} (\fou{W}_{\um}^t\fOp_\um^{-1}\fou{W}_{\um})- \frac{8}{100}\pen_m}
\\+2n^3 \sum_{m=1}^{\Mon}  \frac{V_m}{n} \big(\Ex\mnormV{\fou{\Xi}_\um\fOp_\um^{1/2}}^8\big)^{1/4} \big(\Ex\normV{[W]_{\um}}^8\big)^{1/4}\big(P(\Xisetmn^c)\big)^{1/2}  +\sum_{m=1}^{\Mon} |\Li(\So_m)|^2 P(\hOpsetmn^c).
\end{multline*}
We bound  the first and second right hand side term with help of \eqref{app:gauss:l3:e1} and \eqref{app:gauss:l3:e2} in Lemma \ref{app:gauss:l3}, which leads to
\begin{multline*}
\sup_{\So\in\cSowr}\sup_{\Op\in\cOpwd} \Ex\set{\sup_{1\leq m\leq \Mon} \vectp{|\hLi_m-\Li(\So_m)|^2-\frac{1}{6}\pen_m}}
\leq  C(\Opd)(\sigma^2+\Sor)n^{-1}\\
\hfill+2n^3 \sup_{\So\in\cSowr}\sup_{\Op\in\cOpwd} \sum_{m=1}^{\Mon}  \frac{V_m}{n} \big(\Ex\mnormV{\fou{\Xi}_\um\fOp_\um^{1/2}}^8\big)^{1/4}
\big(\Ex\normV{[W]_{\um}}^8\big)^{1/4}\big(P(\Xisetmn^c)\big)^{1/2}\\  + \sup_{\So\in\cSowr}\sup_{\Op\in\cOpwd}\sum_{m=1}^{\Mon} |\Li(\So_m)|^2 P(\hOpsetmn^c).
\end{multline*}
 Taking into account that   for all $\So\in\cSowr$ and $\Op\in\cOpwd$ we have $\max_{1\leq m\leq \Mon} V_m = V_{\Mon}\leq n C(\Opd)(1+\log n)^{-1}$   and  $\rho_m^2\leq
 \sigma_{m}^2\leq C(\Opd)(\sigma^2+\Sor)$  (Lemma \ref{app:pre:l1} (ii) and (iv)) the estimates \eqref{app:gauss:l2:e1} and \eqref{app:gauss:l2:e2} in Lemma \ref{app:gauss:l2} imply
\begin{multline*}
\sup_{\So\in\cSowr}\sup_{\Op\in\cOpwd} \Ex\set{\sup_{1\leq m\leq \Mon} \vectp{|\hLi_m-\Li(\So_m)|^2-\frac{1}{6}\pen_m}}
\leq  \frac{C(\Opd)}{n}(\sigma^2+\Sor)\\
\hfill+\frac{C(\Opd)}{n}(\sigma^2+\Sor) \sup_{\So\in\cSowr}\sup_{\Op\in\cOpwd} (\Ex\HnormV{X}^2)^2   n^2(\Mon)^2 \max_{1\leq m\leq \Mon}\big(P(\Xisetmn^c)\big)^{1/2}\\  + \sup_{\So\in\cSowr}\sup_{\Op\in\cOpwd}\sum_{m=1}^{\Mon} |\Li(\So_m)|^2 P(\hOpsetmn^c).
\end{multline*}
By combining this upper bound, the property $\Ex\HnormV{X}^2\leq \Opd \sum_{j\geq 1}\Opw_j$ and the estimate \eqref{minimax:l1:e5} given in Lemma \ref{minimax:l1} we obtain 
\begin{multline*}
\sup_{\So\in\cSowr}\sup_{\Op\in\cOpwd} \Ex\set{\sup_{1\leq m\leq \Mon} \vectp{|\hLi_m-\Li(\So_m)|^2-\frac{1}{6}\pen_m}}
\leq  \frac{C(\Opd)}{n}(\sigma^2+\Sor)\\
\hfill+\frac{C(\Opd)}{n}(\sigma^2+\Sor) (\sum_{j\geq 1}\Opw_j)^2 \sup_{\So\in\cSowr}\sup_{\Op\in\cOpwd}    n^2(\Mon)^2 \max_{1\leq m\leq \Mon}\big(P(\Xisetmn^c)\big)^{1/2}\\  +
\frac{C(\Opd)}{n}\Sor \sum_{j\geq1}\frac{\fLi^2_j}{\Sow_j} \sup_{\So\in\cSowr}\sup_{\Op\in\cOpwd} n\Mon \max_{1\leq m\leq \Mon}P(\hOpsetmn^c).
\end{multline*}
 The result of the proposition follows now from the  upper bounds \eqref{app:gauss:l4:e1} and \eqref{app:gauss:l4:e2} given in Lemma \ref{app:gauss:l4}, which completes the proof.
\end{proof}
\begin{proof}[\noindent\textcolor{darkred}{\sc Proof of Proposition \ref{adaptive:p2}.}]
Taking the estimate $\mnormV{\fhOp^{-1}_\um}\1_{\hOpsetmn}\leq n$  and the identity $\hLi_m-\Li(\So_m)\1_{\hOpsetmn}
=\fLi_\um^t\fhOp^{-1}_\um\fou{W}_{\um}\1_{\hOpsetmn}$ into account    it  easily follows for all $m\geq 1$  that 
 \begin{equation*}
     \absV{\hLi_{m}-\Li(\So)}^2\leq 3\{\normV{\fLi_\um}^2\,n^2\normV{\fou{W}_{\um}}^2  + (|\Li(\So_{m})|^2+|\Li(\So)|^2)\}.
    \end{equation*}
 Furthermore, by exploiting $\normV{\fLi_\um}^2\leq n$ for all $1\leq m \leq \Mln$ we obtain from the last estimate
 \begin{equation*}
    \max_{1\leq m\leq \Mln}\absV{\hLi_{m}-\Li(\So)}^2\1_{\esetn^c}\leq  3 \{n^3 \sum_{m=1}^{\Mln}\normV{\fou{W}_{\um}}^2\1_{\esetn^c} +(\sup_{m\geq 1}|\Li(\So_{m})|^2+|\Li(\So)|^2)\1_{\esetn^c}\}.
    \end{equation*}
We recall that   for all $\So\in\cSowr$ and $\Op\in\cOpwd$ we have $\rho_m^2\leq C(\Opd)(\sigma^2+\Sor)$  and $(\Ex\normV{\fou{W}_{\um}}^4)^{1/2}\leq 11 \Ex\HnormV{X}^2
 \rho_m^2 n^{-1}$   (Lemma  \ref{app:pre:l1} and \ref{app:gauss:l2}), moreover,  the bounds
$\big(\sup_{m\geq 1}|\Li(\So_{m})|^2+|\Li(\So)|^2\big)\leq (\sup_{m\geq 1}\normV{\So_{m}}^2_\Sow+
 \normV{\So}_\Sow^2)\sum_{j\geq1}\frac{\fLi_j^2}{\Sow_j}\leq C(\Opd)\Sor\sum_{j\geq1}\frac{\fLi_j^2}{\Sow_j} $ (Lemma \ref{minimax:l1}) and  $\Ex\HnormV{X}^2\leq \Opd\sum_{j\geq
   1}\Opw_j$  together with the last upper bound imply
\begin{multline*}
\sup_{\So\in\cSowr}\sup_{\Op\in\cOpwd}\Ex\big(\absV{\hLi_{\whm}-\Li(\So)}^2\1_{\esetn^c}\big)\leq \sup_{\So\in\cSowr}\sup_{\Op\in\cOpwd}\Ex\big( \max_{1\leq m\leq \Mln}\absV{\hLi_{m}-\Li(\So)}^2\1_{\esetn^c}\big)\\\leq
 C(\Opd)\,(\sigma^2+\Sor)\max\set{ \sum_{j\geq1}\Opw_j, \sum_{j\geq1}\frac{\fLi_j^2}{\Sow_j}}  \sup_{\So\in\cSowr}\sup_{\Op\in\cOpwd} \vect{ n^2\Mln |P(\esetn^c)|^{1/2}+ 
 P(\esetn^c)}.
\end{multline*}
The assertion of Proposition \ref{adaptive:p2} follows now by combination of the last estimate and \eqref{app:gauss:l4:e3} in Lemma \ref{app:gauss:l4}, which completes the proof.
\end{proof}
\bibliography{2011-12-09-FLM-LinFun-Adapt-JJRS}

\begin{thebibliography}{32}
\providecommand{\natexlab}[1]{#1}
\providecommand{\url}[1]{\texttt{#1}}
\expandafter\ifx\csname urlstyle\endcsname\relax
  \providecommand{\doi}[1]{doi: #1}\else
  \providecommand{\doi}{doi: \begingroup \urlstyle{rm}\Url}\fi

\bibitem[Barron et~al.(1999)Barron, Birg{\'e}, and
  Massart]{BarronBirgeMassart1999}
A.~Barron, L.~Birg{\'e}, and P.~Massart.
\newblock Risk bounds for model selection via penalization.
\newblock \emph{Probability Theory and Related Fields}, 113\penalty0
  (3):\penalty0 301--413, 1999.

\bibitem[Bosq(2000)]{Bosq2000}
D.~Bosq.
\newblock \emph{Linear Processes in Function Spaces.}, volume 149 of
  \emph{Lecture Notes in Statistics}.
\newblock Springer-Verlag, 2000.

\bibitem[Brown and Low(1996)]{BrownLow1996}
L.~D. Brown and M.~G. Low.
\newblock {A constrained risk inequality with applications to nonparametric
  functional estimation.}
\newblock \emph{The Annals of Statistics}, 24\penalty0 (6):\penalty0
  2524--2535, 1996.

\bibitem[Cardot and Johannes(2010)]{CardotJohannes2008}
H.~Cardot and J.~Johannes.
\newblock {Thresholding projection estimators in functional linear models.}
\newblock \emph{Journal of Multivariate Analysis}, 101\penalty0 (2):\penalty0
  395--408, 2010.

\bibitem[Cardot et~al.(2003)Cardot, Ferraty, and Sarda]{CardotFerratySarda2003}
H.~Cardot, F.~Ferraty, and P.~Sarda.
\newblock Spline estimators for the functional linear model.
\newblock \emph{Statistica Sinica}, 13:\penalty0 571--591, 2003.

\bibitem[Cardot et~al.(2007)Cardot, Mas, and Sarda]{CardotMasSarda2007}
H.~Cardot, A.~Mas, and P.~Sarda.
\newblock {CLT} in functional linear regression models.
\newblock \emph{Probability Theory and Related Fields}, 138:\penalty0 325--361,
  2007.

\bibitem[Crambes et~al.(2009)Crambes, Kneip, and Sarda]{CrambesKneipSarda2007}
C.~Crambes, A.~Kneip, and P.~Sarda.
\newblock Smoothing splines estimators for functional linear regression.
\newblock \emph{The Annals of Statistics}, 37\penalty0 (1):\penalty0 35--72,
  2009.

\bibitem[Dahlhaus and Polonik(2006)]{DahlhausPolonik2006}
R.~Dahlhaus and W.~Polonik.
\newblock {Nonparametric quasi-maximum likelihood estimation for Gaussian
  locally stationary processes.}
\newblock \emph{The Annals of Statistics}, 34\penalty0 (6):\penalty0
  2790--2824, 2006.

\bibitem[Donoho(1994)]{Donoho1994}
D.~Donoho.
\newblock Statistical estimation and optimal recovery.
\newblock \emph{The Annals of Statistics}, 22:\penalty0 238--270, 1994.

\bibitem[Donoho and Low(1992)]{DonohoLow1992}
D.~Donoho and M.~Low.
\newblock Renormalization exponents and optimal pointwise rates of convergence.
\newblock \emph{The Annals of Statistics}, 20:\penalty0 944--970, 1992.

\bibitem[Efromovich and Koltchinskii(2001)]{EfromovichKoltchinskii2001}
S.~Efromovich and V.~Koltchinskii.
\newblock On inverse problems with unknown operators.
\newblock \emph{IEEE Transactions on Information Theory}, 47\penalty0
  (7):\penalty0 2876--2894, 2001.

\bibitem[Eilers and Marx(1996)]{EilersMarx1996}
P.~H. Eilers and B.~D. Marx.
\newblock Flexible smoothing with {B-splines} and penalties.
\newblock \emph{Statistical Science}, 11:\penalty0 89--102, 1996.

\bibitem[Goldenshluger and Lepski(2011)]{GoldenshlugerLepski2011}
A.~Goldenshluger and O.~Lepski.
\newblock Bandwidth selection in kernel density estimation: Oracle inequalities
  and adaptive minimax optimality.
\newblock \emph{The Annals of Statistics}, 39:\penalty0 1608--1632, 2011.

\bibitem[Goldenshluger and Pereverzev(2000)]{GoldPere2000}
A.~Goldenshluger and S.~V. Pereverzev.
\newblock Adaptive estimation of linear functionals in {Hilbert} scales from
  indirect white noise observations.
\newblock \emph{Probability Theory and Related Fields}, 118:\penalty0 169--186,
  2000.

\bibitem[Hall and Horowitz(2007)]{HallHorowitz2007}
P.~Hall and J.~L. Horowitz.
\newblock Methodology and convergence rates for functional linear regression.
\newblock \emph{The Annals of Statistics}, 35\penalty0 (1):\penalty0 70--91,
  2007.

\bibitem[Heinz(1951)]{Heinz1951}
E.~Heinz.
\newblock {Beitr{\"a}ge zur St{\"o}rungstheorie der Spektralzerlegung.}
\newblock \emph{Mathematische Annalen}, 123:\penalty0 415--438, 1951.

\bibitem[Hoffmann and Rei{\ss}(2008)]{HoffmannReiss04}
M.~Hoffmann and M.~Rei{\ss}.
\newblock Nonlinear estimation for linear inverse problems with error in the
  operator.
\newblock \emph{The Annals of Statistics}, 36\penalty0 (1):\penalty0 310--336,
  2008.

\bibitem[Ibragimov and Has'minskii(1984)]{IbragimovHasminskii1984}
I.~Ibragimov and R.~Has'minskii.
\newblock On nonparametric estimation of the value of a linear functional in
  {Gaussian} white noise.
\newblock \emph{Theory of Probability and its Applications}, 29:\penalty0
  18--32, 1984.

\bibitem[Johannes and Schenk(2010)]{JohannesSchenk2010}
J.~Johannes and R.~Schenk.
\newblock On rate optimal local estimation in functional linear model.
\newblock http://arxiv.org/abs/0902.0645v2, Universit{\'e} catholique de
  Louvain, 2010.

\bibitem[Kawata(1972)]{Kawata1972}
T.~Kawata.
\newblock \emph{Fourier analysis in probability theory.}
\newblock Academic Press, New York, 1972.

\bibitem[Laurent et~al.(2008)Laurent, Lude{{\~n}}a, and Prieur]{LLP2008}
B.~Laurent, C.~Lude{{\~n}}a, and C.~Prieur.
\newblock Adaptive estimation of linear functionals by model selection.
\newblock \emph{Electronic Journal of Statistics}, 2:\penalty0 993--1020, 2008.

\bibitem[Lepski(1990)]{Lepski1990}
O.~V. Lepski.
\newblock On a problem of adaptive estimation in gaussian white noise.
\newblock \emph{Theory of Probability and its Applications}, 35:\penalty0
  454--466, 1990.

\bibitem[Li(1982)]{Li1982}
K.~Li.
\newblock Minimaxity of the method of regularization of stochastic processes.
\newblock \emph{The Annals of Statistics}, 10:\penalty0 937--942, 1982.

\bibitem[Mair(1994)]{Mair94}
B.~A. Mair.
\newblock Tikhonov regularization for finitely and infinitely smoothing
  operators.
\newblock \emph{SIAM Journal on Mathematical Analysis}, 25:\penalty0 135--147,
  1994.

\bibitem[Massart(2007)]{Massart07}
P.~Massart.
\newblock \emph{Concentration inequalities and model selection}, volume 1896 of
  \emph{Lecture Notes in Mathematics}.
\newblock Springer, Berlin, 2007.

\bibitem[Math{\'e}(2006)]{Mathe2006}
P.~Math{\'e}.
\newblock {The Lepski{\u\i} principle revisited.}
\newblock \emph{Inverse Problems}, 22\penalty0 (3):\penalty0 11--15, 2006.

\bibitem[{M\"uller} and {Stadtm\"uller}(2005)]{MullerStadtmuller2005}
H.-G. {M\"uller} and U.~{Stadtm\"uller}.
\newblock Generalized functional linear models.
\newblock \emph{The Annals of Statistics}, 33:\penalty0 774--805, 2005.

\bibitem[Natterer(1984)]{Natterer84}
F.~Natterer.
\newblock {Error bounds for Tikhonov regularization in Hilbert scales.}
\newblock \emph{Applicable Analysis}, 18:\penalty0 29--37, 1984.

\bibitem[Neubauer(1988{\natexlab{a}})]{Neubauer1988}
A.~Neubauer.
\newblock {When do Sobolev spaces form a Hilbert scale?}
\newblock \emph{Proceedings of the American Mathematical Society}, 103\penalty0
  (2):\penalty0 557--562, 1988{\natexlab{a}}.

\bibitem[Neubauer(1988{\natexlab{b}})]{Neubauer88}
A.~Neubauer.
\newblock {An a posteriori parameter choice for Tikhonov regularization in
  Hilbert scales leading to optimal convergence rates}.
\newblock \emph{SIAM Journal on Numerical Analysis}, 25\penalty0 (6):\penalty0
  1313--1326, 1988{\natexlab{b}}.

\bibitem[Ramsay and Dalzell(1991)]{RamsayDalzell1991}
J.~O. Ramsay and C.~J. Dalzell.
\newblock Some tools for functional data analysis.
\newblock \emph{Journal of the Royal Statistical Society, Series B},
  53:\penalty0 539--572, 1991.

\bibitem[Speckman(1979)]{Speckman1979}
P.~Speckman.
\newblock Minimax estimation of linear functionals in a {Hilbert} space.
\newblock Unpublished manuscript, 1979.

\end{thebibliography}
\end{document}